\numberwithin{equation}{section}
\newtheorem{Theorem}{Theorem}[section]
\newtheorem{Lemma}[Theorem]{Lemma}
\newtheorem{Proposition}[Theorem]{Proposition}
\newtheorem{Assumption}{H.\!\!}
\theoremstyle{definition}
\newtheorem{Definition}{Definition}[section]
\theoremstyle{remark}
\newtheorem{Remark}{Remark}[section]
 \def\p{\partial} 
\def\to{\rightarrow}
 \def\ol{\overline}    \def\ul{\underline}
\def\Om{\Omega}  \def\om{\omega} 
\newcommand{\q}{\quad}   \newcommand{\qq}{\qquad}
\def\l{\label}  \def\f{\frac}  \def\fa{\forall}
\def\b{\beta}  \def\a{\alpha} \def\ga{\gamma}
\def\eps{\varepsilon}
 \def\t{\times}  \def\lam{\lambda}
\def\ms{\medskip}
\def\cA{\mathcal{A}}
\def\cC{\mathcal{C}}
\def\cE{\mathcal{E}}
\def\cF{\mathcal{F}}
\def\cK{\mathcal{K}}
\def\cL{\mathcal{L}}
\def\cM{\mathcal{M}}
\def\cO{\mathcal{O}}
\def\cP{\mathcal{P}}
\def\cS{\mathcal{S}}
\def\cV{\mathcal{V}}
\def\cW{\mathcal{W}}
\def\ba{{\textbf{a}}}
\def\bf{{\textbf{f}}}
\def\bA{{\textbf{A}}}
\def\bL{{\textbf{L}}}
\def\N{{\mathbb{N}}}
\def\bP{\mathbb{P}}
\def\R{{\mathbb R}}
\def\bS{\mathbb{S}}
\newcommand{\ex}{\mathbb{E}}
\DeclareMathOperator*{\argmax}{arg\,max}
\newcommand{\lc}
{\mathrel{\raise2pt\hbox{${\mathop<\limits_{\raise1pt\hbox
{\mbox{$\sim$}}}}$}}}
\newcommand{\gc}
{\mathrel{\raise2pt\hbox{${\mathop>\limits_{\raise1pt\hbox{\mbox{$\sim$}}}}$}}}
\newcommand{\ec}
{\mathrel{\raise2pt\hbox{${\mathop=\limits_{\raise1pt\hbox{\mbox{$\sim$}}}}$}}}
\def\bb{\begin{equation}} \def\ee{\end{equation}}
\def\bbn{\begin{equation*}} \def\een{\end{equation*}}
\def\beqn{\begin{eqnarray}}  \def\eqn{\end{eqnarray}}
\def\beqnx{\begin{eqnarray*}} \def\eqnx{\end{eqnarray*}}
\def\bn{\begin{enumerate}} \def\en{\end{enumerate}}
\def\bd{\begin{description}} \def\ed{\end{description}}
\begin{document}

\title{Regularity and stability  of feedback relaxed controls
}
\author{
Christoph Reisinger\thanks{Mathematical Institute, University of Oxford, United Kingdom ({\tt christoph.reisinger@maths.ox.ac.uk, yufei.zhang@maths.ox.ac.uk})}
\and
Yufei Zhang\footnotemark[1]
}
\date{}

\maketitle


\noindent\textbf{Abstract.} 
This paper proposes a relaxed control regularization
with general exploration rewards
 to design robust feedback controls 
for multi-dimensional continuous-time stochastic exit time  problems.
We establish that  
the regularized control problem admits a H\"{o}lder continuous
optimal feedback control,
and 
 demonstrate that 
both the value function and the feedback control 
of the regularized control problem 
are Lipschitz stable  with respect to parameter perturbations.
Moreover, 
we   show that a pre-computed feedback relaxed control 
gives a robust performance  in a perturbed system,
 and derive a first-order sensitivity equation for both the value function and optimal feedback relaxed control.
These stability results  provide a theoretical
justification for  recent reinforcement learning heuristics 
that  including an exploration reward in the optimization objective
leads to more robust decision making.
We finally prove 
 first-order monotone convergence of the value functions for relaxed control problems
with vanishing exploration parameters, 
which  subsequently enables us to construct 
the pure exploitation strategy of the original control problem 
based on  the feedback relaxed controls.

\medskip
\noindent
\textbf{Key words.} 
exploration and exploitation,
feedback relaxed control, 
Lipschitz stability, 
sensitivity equation, 
reinforcement learning,  
Hamilton-Jacobi-Bellman equation.

\ms
\noindent
\textbf{AMS subject classifications.} 
93B52, 93B35, 93E20, 68Q32

\medskip

\section{Introduction}

In this paper, we propose a relaxed control regularization with a class of exploration rewards 
to design robust feedback controls 
for multi-dimensional 
stochastic control problems
in a continuous setting.
In particular,
we shall rigorously    demonstrate that the constructed optimal feedback control  is 
Lipschitz stable with respect to 
  perturbations in 
the underlying model.

Since parameter uncertainty in a given model is practically inevitable,
it is essential but challenging to 
\textit{a priori} evaluate the performance of a pre-computed feedback control 
in a perturbed system, 
and to design   feedback policies
capable of handling model uncertainty.
For instance,
let us consider the following infinite-horizon stochastic control problem.
Suppose  $(\a_t)_{t\ge 0}$ is an admissible  control process taking values in a \textit{finite} action space $\bA$, 
and  the underlying state dynamics follows a controlled stochastic differential equation (SDE) defined as follows: $X^{\a,x}_0=x\in \R^n$, and
$$
dX^{\a,x}_t= b(X^{\a,x}_t,\a_t) \,dt+ \sigma(X^{\a,x}_t,\a_t)\,dW_t, \q t\ge 0,
$$
where  $b:\R^n\t \bA\to \R^n$ and $\sigma:\R^n\t \bA\to \R^{n\t n}$ are  given  coefficients.
The aim of the controller is to maximize the total expected discounted  reward
over all admissible strategies. 
It is well-known that  
(see e.g.~\cite[Corollary 5.1  on p.~167]{fleming2006} and 
Theorem \ref{thm:verification} for more precise statements),
\color{black}
under  certain regularity assumptions,
the optimal control strategy can be represented as a deterministic function $\a^u:\R^n\to \bA$, called the optimal feedback control, which maps the current state space  into the action space.
Moreover, one can construct such an optimal feedback control $\a^u$ 
via a verification
argument,
which consists of 
  solving 
a nonlinear Hamilton--Jacobi--Bellman (HJB)
partial differential 
equation (PDE) arising from the dynamic programming principle
for the optimal reward function $u$,
and then
performing a pointwise maximization of the associated Hamiltonian
involving  the function $u$ and its derivatives $(\p_i u,\p_{ij}u)_{i,j=1}^n$ as follows:
 for any given $x\in\R^n$,
\bb\l{eq:ctrl_intro}
\a^u(x)\in \argmax_{\a\in \bA}\bigg[\sum_{i,j=1}^na^{ij}(x,\a)\p_{ij}u(x)+\sum_{i=1}^n b^i(x,\a)\p_iu(x)-c(x,\a) u(x)+f(x,\a)\bigg],
\ee
where $a(x,\a)=\sigma(x,\a)\sigma^T(x,\a)/2$, the functions $c$ and $f$ denote the discount rate and the instantaneous reward, respectively.
We refer the reader to Theorems \ref{thm:verification} and \ref{thm:verification_relax}
for rigorous arguments of the above procedure
for  control problems of our interest,  
and to \cite[Theorem 5.1  on p.~166]{fleming2006} for a general statement.
\color{black}

We observe, however, that  the control strategy $\a^u$ satisfying  \eqref{eq:ctrl_intro} 
 in general is  difficult to implement and 
  unstable to parameter perturbations,
which in practice would result in numerical instability of learning algorithms.  
Due to the finiteness of the action space $\bA$  and 
the fact that  $\argmax$ is a set-valued mapping,
 a function $\a^u:\R^n\to \bA$ satisfying \eqref{eq:ctrl_intro} 
 in general is non-unique and merely measurable,
 and hence it is hard to  follow such an irregular strategy in practice.
More importantly, the discreteness of the set $\bA$  implies that the $\argmax$ mapping is not continuous (in the sup-norm), which makes the feedback control $\a^u$ very sensitive to 
perturbations of the coefficients $(b,\sigma,c,f)$. 
In other words, a slight change of the model parameters will result in a significant change of the feedback control, 
especially in the regions where two or more actions lead to 
similar performances based on the current model.
Since it is difficult to determine the occurance of such regions \textit{a priori},
it is unclear how well the control strategy $\a^u$ will perform in a real system with the perturbed coefficients $(\tilde{b},\tilde{\sigma},\tilde{c},\tilde{f})$,
even if  $(\tilde{b},\tilde{\sigma},\tilde{c},\tilde{f})$ is very close to $(b,\sigma,c,f)$.
See the last paragraph of Section \ref{sec:exit}
for  
more details on the instability  of  feedback controls
and its practical impact on learning algorithms.
\color{black}

A tremendous amount of effort has been made to overcome the above difficulties,  particularly in the 
(discrete-time) Reinforcement Learning (RL) setting (see e.g.~\cite{sutton1998}), where 
the agent seeks (nearly) optimal decisions in  a random environment with incomplete information.
Generally speaking,
 the controller must balance  between 
greedily exploiting the available information to choose actions
that maximize short-term  rewards,
and continuously exploring the environment to 
acquire more knowledge for  long-term benefits.
In particular, an entropy-regularized formulation 
has been proposed for solving 
(discrete-time) RL problems in \cite{ziebart2008,nachum2017,geist2019},
where the authors  
incorporate explorations by explicitly including 
 the entropy of the exploration strategy in the optimization objective as a reward function,
and balance exploitation and exploration  by adjusting a weight imposed on this regularization term.
Empirical studies (e.g. \cite{ziebart2008,haarnoja2017, nachum2017,geist2019})
show that 
such a  regularized formulation leads to more  robust decision making. 
Recently, the authors in \cite{wang2019,wang2019_mv} 
 extended  this entropy-regularized formulation to continuous-time RL problems by using 
the relaxed control framework,
and  study the exploration/exploitation trade-off for one-dimensional linear-quadratic (LQ) control problems
via explicit solutions.
The relaxed control approach has then been extended to (discrete-time) 
RL problems with mean-field controls in \cite{gu2019}.

In this work, 
we propose an exploratory framework with general exploration rewards to design 
robust feedback controls for continuous-time stochastic exit time problems with continuous state space and 
discrete action space. 
Our  formulation extends the relaxed control approach in \cite{wang2019,wang2019_mv}
to multi-dimensional state dynamics and general exploration rewards, 
including   Shannon's differential entropy 
and other commonly used regularization functions in the optimization literature 
(see e.g.~\cite{chen1995,zang1980}); see the remark at the end of Section \ref{sec:relax} for a detailed comparison 
among different exploration reward functions.

A major theoretical contribution of this work is a rigorous  stability  analysis 
of the regularized control problem and  its associated feedback  control strategy. 
Although the entropy-regularized RL formulation has demonstrated remarkable robustness in various empirical studies (e.g. \cite{ziebart2008,haarnoja2017, nachum2017,geist2019,gu2019, wang2019_mv}), to the best of our knowledge, there is no published theoretical work on 
 the Lipschitz stability of  \textit{feedback relaxed controls} with respect to   parameter uncertainty (even in a discrete-time setting)
nor on the Lipschitz stability of the value functions for regularized 
continuous-time stochastic control problems
with general multi-dimensional nonlinear state dynamics.
In fact, most existing results on the Lipschitz stability of feedback controls
 are for LQ control problems
 with linear state dynamics and quadratic cost functions
 (see e.g.~\cite{mania2019} for discrete-time LQ problems in an ergodic setting
 and 
\cite{basei2020} for finite-horizon continuous-time LQ problems).
The stability analysis  of  such problems relies heavily on the linearity of optimal feedback controls and 
the associated  Riccati equations,
and hence cannot be directly extended to general nonlinear control problems.
We refer the reader also to 
\cite{aldous1981,langen1981, backhoff2020,backhoff2020_estimate, bayraktar2020, {bayraktar2020_extended}, kara2020}
for the  continuity of 
various 
stochastic optimization problems, 
including stochastic control problems and optimal stopping problems,
in the underlying processes
 with respect to the
 (extended) weak topology.

\color{black}

In this work, we shall close the gap by  providing a theoretical
justification for  recent RL heuristics 
that  including an exploration reward in the optimization objective
leads to more robust decision making.
In particular, we shall demonstrate that
%
 the change in value functions of the regularized  control problems (in the $C^{2,\b}$-norm)
depends Lipschitz-continuously  on 
the perturbations of the model parameters, 
including the coefficients of the state dynamics and reward functions in the optimization objective.
We shall also prove that the regularized control problem 
admits a H\"{o}lder continuous feedback control 
(cf.~the original control $\a^u$ in \eqref{eq:ctrl_intro} is merely measurable),
which is Lipschitz stable  (in the $C^{\b}$-norm) with respect to  parameter perturbations; see Theorem \ref{thm:u_hat-u}.

Moreover, this is the first paper which 
precisely quantifies the performance of a  feedback control pre-computed based on a given model 
in  a new multi-dimensional controlled dynamics with perturbed coefficients.
We will prove that 
the gap between the suboptimal reward function achieved by 
the pre-computed feedback relaxed control 
 and the optimal reward function of the perturbed relaxed control problem 
 depends Lipschitz-continuously on the magnitude of perturbations in the coefficients (see Theorem \ref{thm:u_sub-u_op}).
 We  also establish a first-order sensitivity equation for the  value function and  feedback  control of the perturbed relaxed control problem
(see Theorem \ref{thm:sensitivity} and Remark \ref{rmk:sensitivity_ctrl}),
%
which enables us to 
 quantify the explicit dependence 
of the Lipschitz stability of feedback controls
on the exploration parameter $\eps$
(see Theorem \ref{thm:deltau_eps}).

Let us briefly comment on the two main difficulties encountered in 
 the  stability analysis of feedback relaxed controls
 beyond those encountered in
the finite-dimensional RL setting 
(see e.g.~\cite{forsyth2007,bokanowski2009,geist2019})
and the LQ setting
  (see  e.g.~\cite{mania2019,basei2020}).
As we shall see in \eqref{eq:ctrl_relax},
the feedback relaxed control (in the present continuous setting) is  defined as the pointwise maximizer
of the associated Hamiltonian,
which in general involves 
not only the value function
of the regularized control problem,
but also its  first and second order derivatives.
Hence, 
besides estimating the sup-norm of the value functions as in the finite-dimensional RL setting,
we also need to quantify the impact of parameter uncertainty
on the (first and second order) derivatives of the value functions,
which are  solutions to a fully nonlinear HJB PDEs.
For continuous-time LQ problems, 
such an analysis can be greatly 
 simplified by taking advantage of
the quadratic structure of 
 the value function,
 which reduces the study of HJB PDEs to 
 that of  Riccati
 ordinary differential equations.
 Such  a simplification is not possible for 
general nonlinear 
 control problems,
\color{black}
which  requires us to
derive a precise \textit{a priori} estimate for the derivatives of  solutions 
to the associated fully nonlinear HJB equations.

Moreover, the Lipschitz stability and the first-order sensitivity analysis of 
the feedback relaxed controls 
also require us to establish the regularity  of 
the HJB operator
and the $\argmax$-mapping 
between suitable function spaces
for  regularized control problems.
As already pointed out in \cite{smears2014,ito2019},
the fact that the HJB operator is fully nonlinear (since we allow the diffusion coefficients to be controlled)
  poses a significant challenge for
choosing proper function spaces to 
simultaneously ensure the differentiability of the fully nonlinear HJB operator
and the bounded invertibility of its (Fr\'{e}chet) derivative,
which are essential for deriving the sensitivity equations 
of the value functions and feedback controls
(see Theorem \ref{thm:sensitivity} and Remark \ref{rmk:sensitivity_ctrl}).
Here, 
by taking advantage of the  exploration reward functions,
we demonstrate that  the HJB operator and the $\argmax$-mapping for the regularized control problem
are sufficiently smooth between suitable H\"{o}lder spaces,
which together with an elliptic regularity estimate  leads us to the desired 
sensitivity results for the feedback relaxed controls;
see Remark \ref{rmk:Holder_Sobolev} for more details.

Finally, we  establish that, as the exploration parameter tends to zero, 
the value function  of the relaxed control problem  converges monotonically   to that of the classical stochastic control problem with a first-order accuracy (see Theorem \ref{thm:value_converge}).
The convergence of value functions (in  the $C^{2,\b}$-norm)
subsequently enables us to deduce a novel uniform result (on compact sets) for
the feedback relaxed control to a pure exploitation strategy of the original control problem.
We further prove an exact regularization property for a class of  reward functions,
which allows us to recover the pure exploitation  strategy based on the feedback relaxed control \textit{without} sending the exploration parameter  to 0
(see Theorem \ref{thm:ctrl_converge}).

We organize this paper as follows. Section \ref{sec:exit} introduces the stochastic exit control problem, and establishes its connection to  HJB equations. In Section \ref{sec:relax}, we  propose a relaxed  control regularization involving general exploration reward functions for  the stochastic control problem,
and  establish the  H\"{o}lder regularity of the feedback relaxed control strategy.
Then, for a fixed positive exploration parameter,
we prove the Lipschitz stability of the value function and feedback relaxed control 
with respect to parameter perturbations 
in Section \ref{sec:lipschitz},
and derive their first-order sensitivity equations  in Section \ref{sec:sensitivity}.
We establish the convergence of value functions and relaxed control strategies for 
 vanishing exploration parameters  in Section \ref{sec:convergence}.
Appendix \ref{appendix:lemmas} 
is devoted to the proofs of some technical results.

\section{Stochastic exit time problem and HJB equation}\l{sec:exit}
In this section, we introduce the stochastic exit time problem  of our interest, state the main assumptions on its coefficients, and recall its connection with HJB equations. We start with some useful notation which is needed frequently throughout this work.

For any given multi-index $\b=(\b_1,\ldots, \b_n)$ with $\b_i\in \N\cup\{0\}$, $i=1,\ldots, n$, we define $|\b|=\sum_{i=1}^n \b_i$ and 
$
D^\b\phi =\tfrac{\p^{|\b|}\phi}{\p x_1^{\b_1}\ldots \p x_n^{\b_n}}.
$
For any given  open  subset $\cO\subset \R^n$, $k\in \N\cup \{0\}$, $\theta\in (0,1]$, and   function $\phi:\ol{\cO}\to \R$, we define the following semi-norms:
\begin{equation*}
\begin{alignedat}{2}
[\phi]_{0;\ol{\cO}}&=\sup_{x\in \ol{\cO} }|\phi(x)|,
\q
&
[\phi]_{\theta;\ol{\cO}}&=
\sup_{x,y\in \ol{\cO},x\not=y}
\f{|\phi(x)-\phi(y)|}{|x-y|^\theta},
\\
[\phi]_{k,0;\ol{\cO}}&=\sum_{|\b|=k}[D^\b\phi]_{0;\ol{\cO}}, 
\q
&
[\phi]_{k,\theta;\ol{\cO}}&=\sum_{|\b|=k}[D^\b \phi]_{\theta;\ol{\cO}}.
\end{alignedat}
\end{equation*}
Then 
we shall denote by $C^{k}(\ol{\cO})$ the space of  $k$-times continuously differentiable functions in $\ol{\cO}$ equipped with the norm $|\phi|_{k;\ol{\cO}}=\sum_{m=0}^k[\phi]_{m,0;\ol{\cO}}$,
and by $C^{k,\theta}(\ol{\cO})$ the space consisting of all functions in $C^{k}(\ol{\cO})$ satisfying $[\phi]_{k,\theta;\ol{\cO}}<\infty$, equipped with the norm $|\phi|_{k,\theta;\ol{\cO}}=|\phi|_{k;\ol{\cO}}+[\phi]_{k,\theta;\ol{\cO}}$. When $k=0$, we use $C^\theta(\ol{\cO})$ to denote $C^{0,\theta}(\ol{\cO})$, and use $|\cdot|_{\theta;\ol{\cO}}$ to denote $|\cdot|_{0,\theta;\ol{\cO}}$.
We shall omit the subscript 
$\ol{\cO}$ in the (semi-)norms if no confusion appears.

Finally, we shall 
denote by $[a^{ij}]$ the $n\t n$ matrix whose $ij$th-entries are given by $a^{ij}$, 
 by $\bS^{n}$, $\bS^n_0$ and $\bS^n_{>}$, respectively, the set of $n\t n$ symmetric, symmetric  positive semi-definite
 and
 symmetric positive definite matrices, 
 by $X\ge Y$ in $\bS^n$ the fact that $X-Y$ is positive semi-definite.
For any given $K\in \N$, we denote 
  by $\Delta_K$ the probability simplex in $\R^K$, i.e.,
\bb\l{eq:delta_K}
\textstyle \Delta_K=\left\{\lambda\in \R^K  \biggm\vert  \sum_{k=1}^K \lambda_k=1, \lambda_k\ge 0,\, \fa k=1,\ldots, K\right\}.
\ee

Now we are ready to  introduce the control problem of interest. 
In order to allow irregular feedback control strategies, we consider  the following weak formulation of a control problem, which includes the underlying probability space as part of control strategies (see e.g.~\cite{yong1999,fleming2006}).
See Remark \ref{rmk:weak_formulation} for possible extensions to stochastic control problems under strong formulation, for which the underlying probability reference system is fixed.

\begin{Definition}
A $5$-tuple 
$\pi=(\Om, \cF, \{\cF_t\}_{t\ge 0}, \bP, W)$
is said to be a reference probability system 
if  $(\Om, \cF, \{\cF_t\}_{t\ge 0}, \bP)$ is 
a filtered probability space satisfying the usual condition\footnotemark,
\footnotetext{We say  $(\Om, \cF, \{\cF_t\}_{t\ge 0}, \bP)$ satisfies the usual condition if 
$(\Om, \cF, \bP)$ is complete, $\cF_0$ contains all the $\bP$-null sets in $\cF$, 
and $\{\cF_t\}_{t\ge 0}$ is right continuous.}
and $W=(W_t)_{t\ge 0}$ is an  $\{\cF_t\}_{t\ge 0}$-adapted 
$n$-dimensional  Brownian motion.
We denote by $\Pi_{\textnormal{ref}}$
 the set of all reference probability systems.
\end{Definition}

Now let $\cO$ be a given bounded domain in $\R^n$, 
i.e.,  a bounded connected open  subset of $\R^n$.
The aim of the controller is to maximize the expected discounted reward up to the first exit time of
a controlled dynamics from the domain $\cO$. 
More precisely,  let
$\pi=(\Om, \cF,  \{\cF_t\}_{t\ge 0}, \bP, W )\in\Pi_{\textnormal{ref}}$ be a given reference probability system,
and 
$\cA_\pi$ be the set of $\{\cF_t\}_{t\ge 0}$-progressively measurable processes $\a$ taking values in a finite set  $\bA$.
For any given initial
state $x \in \R^n$, and control $\a \in  \cA_\pi$, we consider the controlled  dynamics $X^{\a,x}$
satisfying the following SDE: $X_0^{\a,x}=x$ and 
\bb\l{eq:sde}
dX^{\a,x}_t= b(X^{\a,x}_t,\a_t) \,dt+ \sigma(X^{\a,x}_t,\a_t)\,dW_t, \q t \in ( 0,\infty),
\ee
where 
$b:\R^n\t \bA\to \R^n$ and $\sigma:\R^n\t \bA\to \R^{n\t n}$ 
are given Lipschitz continuous functions (see (H.\ref{assum:D}) for precise conditions),
and denote by 
$\tau^{\a,x}\coloneqq\inf\{t\ge 0\mid X^{\a,x}_t\not \in \cO\}$
the first exit time   of the  dynamics $X^{\a,x}$ from the  domain $\cO$,\footnotemark 
\footnotetext{Note that, 
if $(\Om, \cF, \{\cF_t\}_{t\ge 0}, \bP)$ is a filtered probability space satisfying the usual condition,
$(X_t)_{t\ge 0}$ is an $\{\cF_t\}_{t\ge 0}$-progressively measurable continuous process,
and  $\cO$ is an open subset of $\R^n$, then the first exit time $\tau=\inf\{t\ge 0\mid X^{\a,x}_t\not \in \cO\}$ is an $\{\cF_t\}_{t\ge 0}$-stopping time;
see \cite[Example 3.3 on p.~24]{yong1999}.
}
and by
$(\Gamma^{\a,x}_t)_{t\in [0,\tau^{\a,x}]}$  the controlled discount factor:  
$
\Gamma^{\a,x}_t\coloneqq \exp\left(-\int_0^t c(X^{\a,x}_s,\a_s)\,ds\right)$
for all $t\in [0,\tau^{\a,x}]$.
Then, 
for each given $x\in \ol{\cO}$,
we shall  consider the following value function:
\bb\l{eq:value}
v(x)=\sup_{\pi\in \Pi_{\textnormal{ref}}}\sup_{\a\in \cA_\pi}
\ex^{\bP}\bigg[\int_0^{\tau^{\a,x}} \Gamma^{\a,x}_s f(X^{\a,x}_s,\a_s)\,ds+ g(X^{\a,x}_{\tau^{\a,x}})\Gamma^{\a,x}_{\tau^{\a,x}}\bigg], 
\ee
where the functions $f$ and $g$ denote, respectively, the running reward and the exit reward.

Throughout this work, we shall perform the analysis under the following  assumptions on the coefficients:

\begin{Assumption}\l{assum:D}
Let $n, K\in \N$, $\cK=\{1,\ldots,K\}$,   $\bA$ is a set of cardinality $K$, i.e., $\bA=\{\ba_k\}_{k\in \cK}$,
and $\cO$ be a bounded domain in $\R^n$.
There exist constants $\nu, \Lambda>0$, $\theta\in (0,1]$ such that 
the boundary $\p\cO$ of $\cO$ is of class  $C^{2,\theta}$, $g\in  C^{2,\theta}(\ol{\cO})$, 
and the  functions $b:\R^n\t\bA\to \R^n$, $\sigma:\R^n\t \bA\to \R^{n\t n}$, $c:\ol{\cO}\t \bA\to [0,\infty)$ and $f:\ol{\cO}\t \bA\to \R$ satisfy the following conditions: for each $k\in \cK$, 
\begin{align}
\sigma(x,\ba_k) \sigma^T(x,\ba_k)\ge \nu I_n,
\q \textnormal{for all  $x\in \R^n$,}\l{eq:elliptic}\\
\sum_{i,j} |\sigma^{ij}(\cdot,\ba_k)|_{0,1;\R^n}+\sum_{i} |b^{i}(\cdot,\ba_k)|_{0,1;\R^n}+
|c(\cdot,\ba_k)|_{\theta;\ol{\cO}}+|f(\cdot,\ba_k)|_{\theta;\ol{\cO}}\le \Lambda. \l{eq:holder}
\end{align}

\end{Assumption}

\begin{Remark}\l{rmk:trace}
The Lipschitz continuity of  $b$ and $\sigma$ on $\R^n$ ensures  that,
for any given 
$\pi\in \Pi_{\textnormal{ref}}$,
$\a\in \cA_\pi$
and $x\in \R^n$, the controlled SDE \eqref{eq:sde}  admits a unique strong solution.
Moreover,
the non-degeneracy  of $\sigma$ on $\R^n$ 
 ensures that 
SDEs with non-Lipschitz feedback controls admit a weak solution 
(cf.~Theorems \ref{thm:verification} and \ref{thm:verification_relax}); see also Lemma \ref{lemma:sqrt_A}.

%

As shown in \cite[Lemma 6.38]{gilbarg1985},  the fact that $\p\cO$ is of class $C^{2,\theta}$ ensures that a function in $C^{2,\theta}(\ol{\cO})$ has boundary values in $C^{2,\theta}(\p{\cO})$,
and  conversely, any function $\phi\in C^{2,\theta}(\p {\cO})$ can be extended to a function in $C^{2,\theta}(\ol{\cO})$.
Hence, one can introduce a boundary norm $ |\cdot|_{2,\theta;\p\cO}$ for the space $C^{2,\theta}(\p{\cO})$, such that for any given $\phi\in C^{2,\theta}(\p{\cO})$, $|\phi|_{2,\theta,\p\cO}=\inf_{\Phi}|\Phi|_{2,\theta;\ol{\cO}}$, where
$\Phi\in C^{2,\theta}(\ol{\cO})$ is a global extension of $\phi$ to $\ol{\cO}$. 
The space $C^{2,\theta}(\p{\cO})$ equipped with  the norm $|\cdot|_{2,\theta;\p\cO}$ is a Banach space (see e.g.~the discussions on page 94 in \cite{gilbarg1985}). 


To simplify the presentation, 
we  study exit time control problems with H\"{o}lder continuous  coefficients 
in this work 
and analyze classical solutions  of 
 associated elliptic HJB equations. 
Similar results, including the characterization and Lipchitz stability of feedback 
relaxed controls in Sections \ref{sec:relax} and 
\ref{sec:lipschitz},
 can be obtained for
 finite horizon
  control problems with measurable coefficients,
whose corresponding  parabolic HJB equations admit 
weak solutions in 
suitable  Sobolev spaces (see \cite{smears2015} for the well-posedness of weak solutions to parabolic HJB equations
and \cite[Theorem 1 on p.~122]{krylov1980} for a generalized It\^{o}'s formula).
The first-order sensitivity analysis in Section 
\ref{sec:sensitivity} in general can only be performed 
for classical solutions in H\"{o}lder spaces; see Remark \ref{rmk:Holder_Sobolev} for details.

\color{black}
\end{Remark}

The rest of this section is devoted to the connection between the stochastic exit time problem and a Hamilton-Jacobi-Bellman (HJB) boundary value problem, which plays an essential role in the construction of feedback control strategies.
More precisely, we now consider the following HJB equation  with inhomogeneous Dirichlet boundary data:
\begin{align}\l{eq:hjb}
F_0[u]&\coloneqq 
H_0(\bL u+\bf)=0 \q\textnormal{in $\cO$}, 
\qq
u=g \q \textnormal{on $\p\cO$,}
\end{align}
where $H_0:\R^K\to \R$ is the pointwise maximum function, i.e., $H_0(x)=\max_{k\in \cK} x_k$ for all $x=(x_1,\ldots, x_K)^T\in \R^K$, 
$\bf:\ol{\cO}\to \R^K$ is the function satisfying
$\bf(x)=(f(x,\ba_k))_{k\in \cK}$
for all $x\in \ol{\cO}$, 
and $\bL=(\cL_k)_{k\in \cK}$ is a family of elliptic operators satisfying for all $k\in \cK$, $\phi\in C^2({\cO})$, $x\in \cO$ that 
\bb\l{eq:L_k}
\cL_k \phi(x)\coloneqq a^{ij}_k(x)\p_{ij}\phi(x)+b^i_k(x)\p_i\phi(x)-c_k(x) \phi(x), \q \textnormal{with $a_k\coloneqq\tfrac{1}{2}\sigma_k\sigma^T_k$.}
\ee
Above and hereafter, when there is no ambiguity, we shall 
denote by $\phi_k(\cdot)$ a generic function $\phi(\cdot,\ba_k)$ for all $k\in \cK$,
and 
adopt the summation convention as in \cite{gilbarg1985,chen1998}, i.e., repeated equal dummy indices indicate summation from 1 to $n$.

Throughout this paper, 
we shall  focus on the classical solution $u\in
 C(\ol{\cO}) \cap C^2(\cO)$ to \eqref{eq:hjb}
 established in the following theorem,
which subsequently enables us to  characterize 
 optimal feedback controls  for \eqref{eq:value}. 
\begin{Theorem}\l{thm:wp}
Suppose (H.\ref{assum:D}) holds,
and let $M=\sup_{i,j,k}|\sigma^{ij}_k|_{0;\ol{\cO}}$.
Then
the Dirichlet problem \eqref{eq:hjb} admits a unique solution $u\in C(\ol{\cO}) \cap C^2(\cO)$. 
Moreover, there exists a constant $\b_0=\b_0(n,\nu,M)\in (0,1)$
and a Borel measurable function $\a^u:\ol{\cO}\to \bA$
 such that $u\in  C^{2,\min(\b_0,\theta)}(\ol{\cO})$ and 
\bb\l{eq:ctrl}
\a^u(x)\in \argmax_{\ba_k\in \bA}
\big(
\cL_ku(x)+f_k(x)
\big)
\q \fa x\in \ol{\cO}.
\ee
\end{Theorem}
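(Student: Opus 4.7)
The plan is to view $F_0[u]=H_0(\bL u+\bf)=\max_{k\in\cK}(\cL_k u+f_k)$ as a pointwise maximum of finitely many linear uniformly elliptic operators with Hölder continuous coefficients. This makes $F_0$ a uniformly elliptic, convex (in the Hessian variable) and Lipschitz fully nonlinear operator, so the classical theory of Bellman-type equations applies. For \emph{uniqueness}, I would invoke the comparison principle: given two solutions $u_1,u_2\in C(\ol{\cO})\cap C^2(\cO)$ with the same Dirichlet data, at any interior point $x_0$ where $w=u_1-u_2$ attains a negative minimum, choose $k^\star\in\argmax_k(\cL_k u_2(x_0)+f_k(x_0))$, so that the difference satisfies $\cL_{k^\star}w(x_0)\le 0$; the strong maximum principle for the linear operator $\cL_{k^\star}$ (using $c_{k^\star}\ge 0$, together with the usual $\eps$-perturbation if $c_{k^\star}\equiv 0$) then forces $u_1\ge u_2$ in $\ol{\cO}$, and symmetry yields equality.

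For \emph{existence} of a classical solution I would employ Howard's policy iteration. Starting from any measurable feedback $\a^{(0)}:\ol{\cO}\to\bA$, recursively solve the linear Dirichlet problem
\begin{equation*}
\cL_{\a^{(n)}}u^{(n)}+f_{\a^{(n)}}=0 \text{ in }\cO, \qquad u^{(n)}=g \text{ on }\p\cO,
\end{equation*}
via Schauder theory for linear uniformly elliptic PDEs with Hölder continuous coefficients (which yields $u^{(n)}\in C^{2,\theta}(\ol{\cO})$), and then update $\a^{(n+1)}(x)$ as a lexicographic selector from $\argmax_k(\cL_k u^{(n)}(x)+f_k(x))$. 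Monotonicity of the Howard iteration gives $u^{(n)}\nearrow u$ pointwise, and uniform Schauder bounds imply convergence in $C^2(\ol{\cO})$ to a classical solution of \eqref{eq:hjb}. Alternatively, a Perron-type construction based on the comparison principle and explicit $C^2$-barriers (available since $\p\cO\in C^{2,\theta}$ and $F_0$ is uniformly elliptic) would work.

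For the global $C^{2,\min(\b_0,\theta)}(\ol{\cO})$ regularity, interior $C^{2,\b_0}_{\mathrm{loc}}(\cO)$ regularity with $\b_0=\b_0(n,\nu,M)\in(0,1)$ follows from the Evans--Krylov theorem, which applies because $F_0$ is convex in the Hessian and uniformly elliptic. Krylov's boundary Hölder estimates for convex fully nonlinear uniformly elliptic equations, combined with the $C^{2,\theta}$ regularity of $\p\cO$ and $g$, then upgrade this to regularity up to the boundary, and matching interior and boundary exponents yields the global estimate with the exponent $\min(\b_0,\theta)$. Finally, to construct the Borel measurable feedback, I note that $\Psi_k(x):=\cL_k u(x)+f_k(x)$ belongs to $C^{\min(\b_0,\theta)}(\ol{\cO})$; defining
\begin{equation*}
\a^u(x):=\ba_{k^\star(x)}, \qquad k^\star(x)=\min\big\{k\in\cK:\Psi_k(x)=H_0(\bL u(x)+\bf(x))\big\},
\end{equation*}
each preimage $\{\a^u=\ba_k\}$ is a finite Boolean combination of closed sets, hence Borel, and \eqref{eq:ctrl} holds by construction.

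The main obstacle will be the global $C^{2,\min(\b_0,\theta)}(\ol{\cO})$ regularity: interior Evans--Krylov is classical, but transferring the estimate up to $\p\cO$ for the fully nonlinear operator $F_0$ relies on Krylov's boundary $C^{2,\b_0}$ estimate, which in turn demands careful flattening of $\p\cO$ via $C^{2,\theta}$ charts and a compatibility argument reconciling the dimensional Evans--Krylov exponent $\b_0$ with the boundary regularity exponent $\theta$.
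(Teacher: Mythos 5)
Your proposal takes a genuinely different route from the paper, and the overall plan is sound, but there is one concrete gap in the Howard-iteration branch.

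\textbf{Comparison with the paper.}
The paper's proof of Theorem~\ref{thm:wp} establishes only uniqueness directly and explicitly defers existence and $C^{2,\min(\b_0,\theta)}$ regularity to the constructive argument in Theorem~\ref{thm:value_converge} (via the relaxed control approximation and the a~priori estimate of Proposition~\ref{prop:Holder_bdd}, which itself invokes Lemma~\ref{lemma:a_priori}, i.e., the Chen--Wu/Evans--Krylov-type Schauder estimate). For uniqueness the paper applies the fundamental theorem of calculus to the Lipschitz map $H_0$ to write $F_0[u_1]-F_0[u_2]=\tilde L(u_1-u_2)$ with $\tilde L=\eta^T\bL$ a uniformly elliptic linear operator having $\eta\in\Delta_K$ and nonnegative zeroth-order coefficient; then the classical linear maximum principle (Gilbarg--Trudinger, Thm.~3.7) finishes the job. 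This ``averaging into a linear operator'' is cleaner than your pointwise-at-the-minimum argument, which is still valid but calls for a little more care: at a negative interior minimum of $w=u_1-u_2$ you indeed get $\cL_{k^\star}w(x_0)\le 0$ from the subsolution property of $u_1$ for the frozen control $k^\star$, but what you then need is the \emph{weak} maximum principle argument at the perturbed minimum of $w-\delta\phi$ for a barrier $\phi$ with $\cL_k\phi>0$ for every $k$ (available because of uniform ellipticity); ``strong maximum principle'' is not quite the right phrase, since the starting inequality holds only at the single point $x_0$, not on a region. Your regularity argument (interior Evans--Krylov plus Krylov's $C^{2,\beta}$ boundary estimate, matched exponents $\min(\b_0,\theta)$) is in substance the same as the paper's Chen--Wu estimate, just cited from different sources.

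\textbf{A genuine gap in the Howard-iteration step.}
The policy-iteration sketch does not work as written. Starting from a merely Borel measurable feedback $\a^{(n)}:\ol{\cO}\to\bA$, the frozen coefficients $x\mapsto a(x,\a^{(n)}(x))$, $b(x,\a^{(n)}(x))$, etc., are only measurable (piecewise H\"older on measurable sets), not H\"older continuous on $\cO$. Schauder theory therefore does \emph{not} yield $u^{(n)}\in C^{2,\theta}(\ol{\cO})$, and for $n\ge 3$ even well-posedness in $W^{2,p}$ for a linear equation with merely measurable principal coefficients is in general false, so the iterates need not be well-defined in any classical or strong sense. To repair this you must either regularize the feedback at each step, work with the viscosity formulation of the iteration, or simply drop Howard iteration and fall back on the alternative you mention: a Perron construction produces a viscosity solution, Evans--Krylov then upgrades it to $C^{2,\b_0}_{\mathrm{loc}}$, and the boundary estimate gives global $C^{2,\min(\b_0,\theta)}$. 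Equivalently, the method-of-continuity route the paper cites from Chen--Wu, or the paper's own relaxed-control approximation (uniform $C^{2,\beta}$ bounds from Proposition~\ref{prop:Holder_bdd} plus Arzel\`a--Ascoli and monotonicity), avoid this issue entirely. The remainder of your proposal (the lexicographic Borel selector, the uniqueness modulo the phrasing caveat, the regularity route) is fine.
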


\begin{proof}
We shall only prove the uniqueness of solutions in $C(\ol{\cO}) \cap C^2(\cO)$, 
since
the existence of classical solutions in $C^{2,\min(\b_0,\theta)}(\ol{\cO})$ 
will be established constructively based on 
the relaxed control approximation in Theorem \ref{thm:value_converge}
(see also \cite[Theorem 7.5]{chen1998}
for a proof of existence based on the method of continuity),
and the existence of a Borel measurable function satisfying \eqref{eq:ctrl} follows directly from the measurable 
selection theorem (see \cite[Theorem 18.19]{aliprantis2006}).

Let $u_1,u_2\in C(\ol{\cO}) \cap C^2(\cO)$ be solutions to \eqref{eq:hjb}.
Then 
 for all $x\in \cO$, we can deduce from the fundamental theorem of calculus that
\begin{align*}
0&=H_0(\bL u_1(x)+\bf(x))-H_0(\bL u_2(x)+\bf(x))
=
\int_0^1 h(s,x)^T\bL (u_1-u_2)(x)\,ds
=\tilde{L}(u_1-u_2)(x),
\end{align*}
where
$h:[0,T]\t \cO \to \Delta_K$ is a measurable function, and 
$\tilde{L}$ denotes the elliptic operator  satisfying for all  $\phi\in C^2({\cO})$ and $x\in \cO$ that 
$\tilde{L}\phi(x)=\ \eta^T(x)\bL\phi(x) $
with $\eta(x)=\int_0^1 h(s,x)\,ds$.
In particular, the function $h$ can be chosen as 
the weak limit of the functions 
$([0,T]\t\cO\ni (s,x)\mapsto (\nabla H^\eps_0)(\bL u_2(x)+\bf(x)+s\bL (u_1-u_2)(x))\in \Delta_K)_{\eps>0}$
in $L^2([0,T]\t\cO)$, where $(H^\eps_0)_{\eps>0}$ is a sequence of smooth approximations of $H_0$
obtained by using the standard mollification argument.
Then we can easily show that 
$\eta(x)\in \Delta_K$ for all $x\in \cO$,
$\tilde{L}$ is a uniform elliptic operator, 
and $\sum_{k=1}^K\eta_k(x)c_k(x)\ge 0$
for all $x\in \cO$.
Hence the classical maximum principle (see e.g.~\cite[Theorem 3.7]{gilbarg1985}) 
and $u_1=u_2$ on $\p\cO$ 
imply that $u_1=u_2$ on $\ol{\cO}$, 
which shows that 
 the Dirichlet problem \eqref{eq:hjb} admits at most one solution 
in $C(\ol{\cO}) \cap C^2(\cO) $.
\end{proof}


We now  present a  verification result, i.e., Theorem \ref{thm:verification},
which shows that 
the classical solution to the HJB equation \eqref{eq:hjb} is the value function \eqref{eq:value},
and
the Borel measurable function 
$\a^u$ defined as in \eqref{eq:ctrl} is a feedback control of \eqref{eq:value}.
%
%
%
The proof will be postponed to Appendix \ref{appendix:lemmas},
which essentially follows from  It\^{o}'s formula and
the existence result of weak solutions to SDEs with non-degenerate diffusion coefficients 
(see  \cite[Theorem 1]{mishura2016}).

We first recall the definition of optimal feedback control (see e.g.~\cite[Definition 6.1]{yong1999}).

\begin{Definition}\l{def:feedback}
A Borel measurable function 
$h:\ol{\cO}\to \bA$ is said to be a feedback control of \eqref{eq:value}
if for all $x\in \ol{\cO}$, 
there exists 
$\pi^x=(\Om^{x}, \cF^{x}, \{\cF^{x}_t\}_{t\ge 0}, \bP^{x}, W)\in \Pi_{\textnormal{ref}}$,  
and an $\{\cF^{x}_t\}_{t\ge 0}$-progressively measurable continuous process $(X^{x}_t)_{t\ge 0}$,
 such that
$X^{x}_0=x$,
and
for  $\bP^{x}$-a.s.~that
\bb\l{eq:X^*}
dX^{h,x}_t= b(X^{h,x}_t,{h}(X^{h,x}_t)) \,dt+ \sigma(X^{h,x}_t,{h}(X^{h,x}_t))\,dW_t
\q
\textnormal{ $\fa t\in [0,\tau^{x}]$,}
\ee
and $\int_0^{\tau^{h,x}} \left(| b(X^{h,x}_s,{h}(X^{h,x}_s))|+|\sigma(X^{h,x}_s,{h}(X^{h,x}_s))|^2\right)\,ds<\infty$,
where 
$\tau^{h,x}\coloneqq\inf\{t\ge 0\mid X^{h,x}_t\not \in \cO\}$.
A feedback control $h$ is said to be optimal if 
we have 
for all $x\in \ol{\cO}$ that 
$v(x)=J(x,h)$,
where 
\bb\l{eq:J_h}
J(x,h)\coloneqq \ex^{\bP^x}\bigg[\int_0^{\tau^{h,x}} \Gamma^{h,x}_s f(X^{h,x}_s,h(X^{h,x}_s))\,ds+ g(X^{h,x}_{\tau^{h,x}})\Gamma^{h,x}_{\tau^{h,x}}\bigg],
\ee
and $\Gamma^{h,x}_t=\exp\big(-\int_0^t c(X^{h,x}_s,h(X^{h,x}_s))\,ds\big)
$
for all $t\in [0,\tau^{h,x}]$.
\end{Definition}

\begin{Theorem}\l{thm:verification}
Suppose (H.\ref{assum:D}) holds. 
Let  
$v:\ol{\cO}\to \R$ be the value function defined as in \eqref{eq:value},
 $u\in C(\ol{\cO}) \cap C^2(\cO)$ be the solution to the Dirichlet problem \eqref{eq:hjb},
 and $\a^u:\ol{\cO}\to \bA$ be a Borel measurable function 
 satisfying \eqref{eq:ctrl}.
Then we have 
$u(x)=v(x)$ 
for all $x\in \ol{\cO}$, 
and 
$\a^u$ is an optimal feedback control of \eqref{eq:value}.
\end{Theorem}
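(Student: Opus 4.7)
The plan is the classical verification argument: I would establish the two inequalities $u(x)\ge v(x)$ and $u(x)\le J(x,\a^u)$ for every $x\in \ol{\cO}$, which, combined with the trivial bound $J(x,\a^u)\le v(x)$, gives both $u=v$ and optimality of $\a^u$. The boundary case $x\in\p\cO$ is immediate from $\tau^{\a,x}=0$ a.s.~and $u|_{\p\cO}=g$, so I focus on $x\in\cO$.

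For the inequality $u\ge v$, fix $\pi\in \Pi_{\textnormal{ref}}$ and $\a\in\cA_\pi$, choose an exhausting sequence $\{\cO_m\}_{m\in\N}$ of open sets with $\ol{\cO_m}\subset \cO$ and $\bigcup_m\cO_m=\cO$, and introduce the localising stopping times $\tau_m\coloneqq\tau^{\a,x}\wedge m\wedge \inf\{t\ge 0 \mid X^{\a,x}_t\notin\cO_m\}$, on which $X^{\a,x}$ stays in a region where $u\in C^2$. By Theorem \ref{thm:wp}, $u\in C^{2,\min(\b_0,\theta)}(\ol{\cO})$, so It\^o's formula applied to $\Gamma^{\a,x}_t u(X^{\a,x}_t)$, together with $d\Gamma^{\a,x}_t=-c(X^{\a,x}_t,\a_t)\Gamma^{\a,x}_t\,dt$, yields
\[
d\big(\Gamma^{\a,x}_t u(X^{\a,x}_t)\big)=\Gamma^{\a,x}_t(\cL_{\a_t}u)(X^{\a,x}_t)\,dt+dM_t,
\]
with $M$ a local martingale. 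The HJB equation gives the pointwise bound $\cL_k u+f_k\le H_0(\bL u+\bf)=0$ for every $k\in \cK$, hence $\cL_{\a_t}u(X^{\a,x}_t)\le -f(X^{\a,x}_t,\a_t)$. Taking expectation at $\tau_m$ and rearranging,
\[
u(x)\ge \ex^{\bP}\bigg[\int_0^{\tau_m}\Gamma^{\a,x}_s f(X^{\a,x}_s,\a_s)\,ds+\Gamma^{\a,x}_{\tau_m}u(X^{\a,x}_{\tau_m})\bigg].
\]
Uniform ellipticity of $\sigma\sigma^T$ and boundedness of $\cO$ imply $\ex^{\bP}[\tau^{\a,x}]<\infty$, so $\tau_m\uparrow \tau^{\a,x}$ a.s.; continuity of $u$ on $\ol{\cO}$, $u|_{\p\cO}=g$, and boundedness of $f$ then justify dominated convergence, giving $u(x)\ge J(x,\a)$, and taking the supremum yields $u(x)\ge v(x)$.

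For the reverse direction I would exhibit $\a^u$ itself as a feedback control. Because $\a^u:\ol{\cO}\to\bA$ is only Borel measurable, $b^*(x)\coloneqq b(x,\a^u(x))$ and $\sigma^*(x)\coloneqq \sigma(x,\a^u(x))$ are bounded and Borel measurable but not continuous in general, so standard strong-existence theory is unavailable. However the additional symmetry assumption $\sigma(x,\ba_k)\in \bS^n_0$ combined with $\sigma\sigma^T\ge\nu I_n$ in (H.\ref{assum:D}) forces $\sigma^*$ itself to be uniformly non-degenerate, and the weak-existence results cited in the paper (\cite[Theorem 2.6.1]{krylov1980}, \cite[Theorem 6.13]{yong1999}) provide, for each $x\in\ol{\cO}$, a reference system $\pi^x\in\Pi_{\textnormal{ref}}$ and a continuous process $X^{\a^u,x}$ weakly solving \eqref{eq:X^*}; boundedness of $b$ and $\sigma$ on $\ol{\cO}\t\bA$ combined with $\ex[\tau^{\a^u,x}]<\infty$ yields the integrability required by Definition \ref{def:feedback}, so $\a^u$ is indeed a feedback control. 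Repeating the It\^o computation above along $X^{\a^u,x}$, the defining property $\a^u(y)\in\argmax_k(\cL_k u+f_k)(y)$ gives $\cL_{\a^u(y)}u(y)+f(y,\a^u(y))=H_0(\bL u+\bf)(y)=0$ for every $y\in\cO$, so the earlier inequality is now an equality; the same localisation and dominated-convergence step delivers $u(x)=J(x,\a^u)\le v(x)$.

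The main obstacle is precisely the construction in the second step: the feedback $\a^u$ comes with essentially no continuity, so one must trade strong existence for weak existence under non-degeneracy, and then verify that It\^o's formula and the exit-time passage to the limit still go through on the resulting reference probability space rather than on the one originally fixed by $\pi$. Once this is granted the rest is routine localisation/dominated-convergence bookkeeping relying on the regularity $u\in C^{2,\min(\b_0,\theta)}(\ol{\cO})$ from Theorem \ref{thm:wp} and on boundedness of the coefficients on $\ol{\cO}\t\bA$.
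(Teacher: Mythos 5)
Your proposal follows essentially the same route as the paper's proof: first establish $u\ge v$ by applying It\^o's formula to $\Gamma^{\a,x}_t u(X^{\a,x}_t)$ for an arbitrary admissible control and invoking the HJB inequality; then use the symmetry hypothesis $\sigma\in\bS^n_0$ together with uniform ellipticity to guarantee $\sigma(\cdot,\a^u(\cdot))\ge\sqrt{\nu}I_n$, so that weak existence of the closed-loop dynamics follows from \cite[Theorem 2.6.1]{krylov1980} (resp.~\cite[Theorem 6.13]{yong1999}); and finally repeat the It\^o argument with equality in the Hamiltonian along the weak solution to get $u=J(\cdot,\a^u)\le v$. The only cosmetic differences are that you localise explicitly via stopping times $\tau_m$ and pass to the limit by dominated convergence relying on $\ex[\tau^{\a,x}]<\infty$, whereas the paper cites \cite[Lemma 3.1]{buckdahn2016} for the stronger bound $\ex[\exp(\mu\tau^{\a,x})]<\infty$ and applies It\^o's formula directly up to the exit time. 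Both routes are valid; your explicit localisation is, if anything, a touch more careful about the passage to the limit and relies on a slightly weaker exit-time moment estimate.
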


\begin{Remark}\l{rmk:weak_formulation}
As shown in Theorem \ref{thm:verification},
by considering a weak formulation of the stochastic control problem \eqref{eq:value} with   reference probability systems varying in  $\Pi_{\textnormal{ref}}$, we can   rigorously demonstrate that a measurable function $\a^u$ satisfying \eqref{eq:ctrl} is indeed  an optimal feedback control strategy.

One can also consider stochastic exit time problems under a strong formulation, for which we first fix a reference probability system $\pi=(\Om, \cF,  \{\cF_t\}_{t\ge 0}, \bP ,W)$, and the agent only maximizes the reward functional over all admissible control processes in $\cA_\pi$.
It has been shown in \cite[Theorem 2.1]{chaumont2004} that, 
if we assume 
(H.\ref{assum:D}) and $c>0$ on $\bar{\cO}\t \bA$, 
then \eqref{eq:hjb} satisfies the strong comparison principle
i.e., a comparison result for semicontinuous viscosity solutions.
In particular, 
(H4) in  \cite{chaumont2004} is satisfied since 
$\p\cO\in C^{2,\theta}$ enjoys the exterior ball condition, 
and  (H5) in \cite{chaumont2004} is satisfied with $\Gamma_{\textrm{out}}=\p\cO$
due to the uniform ellipticity condition \eqref{eq:elliptic}.
The strong comparison principle further enables us to show that 
the value function of the stochastic control problem (under the strong formulation) is 
the unique continuous viscosity solution to \eqref{eq:hjb};
see \cite[Theorem 3.1]{barles1998}.
\color{black}
Since  the classical solution $u$ is a viscosity solution of \eqref{eq:hjb},
we see it is the value function  of the stochastic control problem (under the strong formulation),
and the strategy $\a^u$ defined in \eqref{eq:ctrl} will lead to the optimal reward.
Hence, we can still view the function $\a^u$ as an optimal feedback control.
\end{Remark}

We reiterate that, due to the fact that $\argmax$ is a set-valued mapping,
the feedback control strategy \eqref{eq:ctrl} 
in general is non-unique, discontinuous, 
and  sensitive to the perturbation of the coefficients.
For instance, let $K=2$, and  consider the set 
$G=\{x\in \cO\mid (\cL_1-\cL_2)u(x)+(f_1-f_2)(x)=0\}$
at whose boundary the optimal control $\a^u$ in \eqref{eq:ctrl} could have a jump discontinuity. 
Except for the trivial case where $\a^u$ is a constant on $\cO$, 
one can easily deduce from the connectedness of $\cO$, the fact that $u\in C^2(\cO)$, and the continuity of the coefficients that the set $G$ is non-empty. 
Since the boundary of the level set $G$ can have poor regularity,
we see the feedback control $\a^u$ in general is merely Borel measurable, 
which introduces a substantial difficulty to follow  the optimal control  in practice.
Moreover, the discontinuity of  $\a^u$ 
also implies that 
 a small perturbation of the coefficients could lead to a significant difference of  $\a^u$ in the sup-norm, especially near the boundary of the set $G$.
It is well-known
(see e.g.~\cite[Section 6.4.2]{bertsekas1996} and \cite[Figure 4]{guo2020})
 that such an instability of feedback controls would 
result in a numerical instability of the learning process, 
i.e., the approximate policies generated by an iterative learning algorithm may
change subsequently  from one iteration to the next, 
and eventually oscillate among several far-from-optimal policies. 
\color{black}

\section{Relaxation of stochastic exit time problem }\l{sec:relax}
In this section, we propose a relaxation of the stochastic exit time problem \eqref{eq:value},
which extends the  ideas used in \cite{wang2019} to control problems with multi-dimensional controlled dynamics and general exploration reward functions.
As we shall see shortly, the relaxed control problem has a  H\"{o}lder continuous feedback control strategy, 
and enjoys better stability  with respect to  perturbation of the coefficients.

The following technical lemma is essential for the formulation
of relaxed control problems
with multi-dimensional dynamics,
whose proof is included in Appendix \ref{appendix:lemmas}. 

\begin{Lemma}\l{lemma:sqrt_A}
Suppose $(H.\ref{assum:D})$ holds. Then there exist  unique functions  $\tilde{b}:\R^n\t \Delta_K\to \R^n$ and  $\tilde{\sigma}:\R^n\t \Delta_K\to \bS^{n}_{>}$
such that it holds for all $x\in \R^n$, $\lambda\in \Delta_K$ that 
$$
\tilde{b}(x,\lambda)=\sum_{k=1}^K{b}(x,\ba_k)\lambda_k,
\q 
\tilde{\sigma}(x,\lambda)\tilde{\sigma}(x,\lambda)^T=
\sum_{k=1}^K{\sigma}(x,\ba_k){\sigma}(x,\ba_k)^T\lambda_k.
$$
Moreover, 
it holds for all $x\in \R^n$, $\lambda\in \Delta_K$ that 
$ \tilde{\sigma}(x,\lambda)\ge \sqrt{\nu} I_n$
and 
$\sum_{i,j} |\tilde{\sigma}^{ij}(\cdot,\lambda)|_{0,1}+\sum_{i} |\tilde{b}^{i}(\cdot,\lambda)|_{0,1}<\infty$.
\end{Lemma}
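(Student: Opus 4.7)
The plan is to set $A(x,\lambda) := \sum_{k=1}^K \sigma(x,\ba_k)\sigma(x,\ba_k)^T \lambda_k$ and define $\tilde{\sigma}(x,\lambda)$ as the unique positive definite square root of $A(x,\lambda)$. The existence, uniqueness, and Lipschitz estimate for $\tilde{b}$ are immediate from the defining linear combination together with \eqref{eq:holder} and the simplex constraints $\lambda_k\in[0,1]$, $\sum_k\lambda_k=1$; the bulk of the work is on $\tilde{\sigma}$.

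First I would verify that $A(x,\lambda)\in\bS^n_{>}$ with $A(x,\lambda)\ge \nu I_n$ for every $(x,\lambda)\in\R^n\t\Delta_K$: each summand is symmetric, and \eqref{eq:elliptic} combined with $\lambda_k\ge 0$, $\sum_k\lambda_k=1$ yields $A(x,\lambda)\ge\sum_k\lambda_k\nu I_n=\nu I_n$. The spectral theorem then supplies a unique $\tilde{\sigma}(x,\lambda)\in\bS^n_{>}$ with $\tilde{\sigma}^2=A$, and spectral mapping gives the eigenvalue bound $\tilde{\sigma}(x,\lambda)\ge\sqrt{\nu}I_n$.

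The principal technical step is Lipschitz continuity in $x$ of $\tilde{\sigma}(\cdot,\lambda)$ uniformly in $\lambda$. I would first establish a square-root Lipschitz lemma on $\{M\in\bS^n : M\ge\nu I_n\}$: for such $P,Q$, setting $X:=P^{1/2}-Q^{1/2}$ gives the Sylvester-type identity $P^{1/2}X+XQ^{1/2}=P-Q$; taking the Frobenius trace inner product with $X$ on both sides and using $\tr(X^TP^{1/2}X)\ge\sqrt{\nu}\|X\|_F^2$ (which follows from $P^{1/2}\ge\sqrt{\nu}I_n$) and the analogous bound for $Q^{1/2}$ yields $\|P^{1/2}-Q^{1/2}\|_F\le\tfrac{1}{2\sqrt{\nu}}\|P-Q\|_F$. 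Next, writing $\sigma_k\sigma_k^T$ componentwise and applying the product rule for the Lipschitz seminorm to \eqref{eq:holder} gives $\|A(x,\lambda)-A(y,\lambda)\|_F\le C(n,\Lambda)|x-y|$ uniformly in $\lambda$, while the sup bound $|\tilde{\sigma}^{ij}(x,\lambda)|\le |A(x,\lambda)|^{1/2}\le C(n,\Lambda)$ is immediate. Composing the three ingredients produces $\sum_{i,j}|\tilde{\sigma}^{ij}(\cdot,\lambda)|_{0,1}\le \Lambda'$ for some $\Lambda'=\Lambda'(n,\nu,\Lambda)$, and the bound on $\sum_i|\tilde{b}^i(\cdot,\lambda)|_{0,1}$ follows by convex combination.

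The main obstacle is the matrix square-root Lipschitz estimate, which fails without the uniform lower bound $A\ge\nu I_n$. Once the Sylvester identity trick above secures a constant depending only on $\nu$, the remaining steps are routine bookkeeping with the Lipschitz product rule and the fact that $\lambda\in\Delta_K$ introduces no growth.
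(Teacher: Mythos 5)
Your argument is correct, and for the crucial Lipschitz step you take a genuinely different route from the paper. The paper invokes an abstract result (Ciarlet, Theorem 7.14-3) stating that $A \mapsto A^{1/2}$ is infinitely differentiable on $\bS^n_{>}$, observes from (H.1) that the family $\big\{\sum_k \sigma_k(x)\sigma_k^T(x)\lambda_k : x\in\R^n,\,\lambda\in\Delta_K\big\}$ lies in a fixed compact subset $G$ of $\bS^n_{>}$, and deduces Lipschitz continuity of the square-root map on $G$ by compactness. You instead give a self-contained, quantitative Lipschitz bound: with $X = P^{1/2}-Q^{1/2}$, the Sylvester identity $P^{1/2}X + XQ^{1/2} = P-Q$, paired with the lower bounds $\tr(P^{1/2}X^2)\ge\sqrt{\nu}\|X\|_F^2$ and $\tr(Q^{1/2}X^2)\ge\sqrt{\nu}\|X\|_F^2$, yields $\|P^{1/2}-Q^{1/2}\|_F \le \tfrac{1}{2\sqrt\nu}\|P-Q\|_F$. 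Your version avoids the soft compactness argument and the citation, and it produces an explicit constant depending only on $\nu$; the paper's version is shorter and defers the matrix analysis to a reference. Both establish exactly what the lemma needs, including the lower spectral bound $\tilde\sigma\ge\sqrt\nu I_n$ and the uniform Lipschitz estimate after composing with the Lipschitz-in-$x$ bound on $A(\cdot,\lambda)$ coming from \eqref{eq:holder}.
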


We now proceed to  introduce the relaxation of the exit time problem \eqref{eq:value}.
Roughly speaking, 
instead of seeking the optimal feedback action, which maps the current state to \textit{a specific action} in the space $\bA$, 
we seek the optimal feedback control distribution, which is a deterministic mapping from the current state to  \textit{a probability measure} over the space $\bA$, i.e., $\lambda^*:\cO\to \cP(\bA)$.
Once such a mapping is determined, at each given state,  the agent will execute the control  by sampling a control action based on the distribution $\lambda^*(x)$. 
We refer the reader  to \cite{wang2019} for a more  detailed derivation of the following regularized control problem \eqref{eq:ctrl_relax} in a one-dimensional setting.
Note that  the fact that $\bA$ has  cardinality $K<\infty$  enables us to identify the space of 
probability measures over  $\bA$
as the probability simplex $\Delta _K$.

More precisely, 
let 
$\pi=(\Om, \cF,  \{\cF_t\}_{t\ge 0}, \bP ,W)\in\Pi_{\textnormal{ref}}$ be a given reference probability system,
and $\cM_\pi$ be the set of $\{\cF_t\}_{t\ge 0}$-progressively measurable processes $\lambda$ taking values in the  set  $\Delta_K$.
Suppose that (H.\ref{assum:D}) holds, 
for any given initial
state $x \in \R^n$, and control $\lambda \in  \cM_\pi$, we consider the controlled diffusion process $X^{\lam,x}$
satisfying the following SDE: $X_0^{\lam,x}=x$ and 
\bb\l{eq:sde_relax}
dX^{\lam,x}_t= \tilde{b}(X^{\lam,x}_t,\lam_t) \,dt+ \tilde{\sigma}(X^{\lam,x}_t,\lam_t)\,dW_t, \q t \in ( 0,\infty),
\ee
where $\tilde{b}:\R^n\t \Delta_K\to \R^n$ and $\tilde{\sigma}:\R^n\t \Delta_K\to \bS^n_>$ are the   functions defined in Lemma \ref{lemma:sqrt_A}.
We further introduce
the first exit time of $X^{\lam,x}$ from the domain $\cO$ defined as  $\tau^{\lam,x}\coloneqq\inf\{t\ge 0\mid X^{\lam,x}_t\not \in \cO\}$,
and the controlled discount factor   $\Gamma^{\lam,x}_t\coloneqq \exp\left(-\int_0^t \sum_{k=1}^K c(X^{\lam,x}_s,\ba_k)\lam^k_s\,ds\right)$
for all $t\in [0,\tau^{\lam,x}]$.

Now let  $\rho:\R^K\to \R\cup\{\infty\}$ be a  given exploration reward function 
satisfying $\rho<\infty$ on $\Delta_K$
(precise conditions will be specified in (H.\ref{assum:rho})).
For any given relaxation parameter $\eps>0$, 
 we consider the following value function:
for each $x\in \ol{\cO}$,
\bb\l{eq:value_relax}
v^\eps(x)=\sup_{\pi\in \Pi_{\textnormal{ref}}}\sup_{\lam\in \cM_\pi}\ex^{\bP}\bigg[\int_0^{\tau^{\lam,x}} \Gamma^{\lam,x}_s 
\left(\sum_{k=1}^K  f(X^{\lam,x}_s,\ba_k)\lam^k_s-\eps\rho(\lambda_s)\right)\,ds+ g(X^{\lam,x}_{\tau^{\lam,x}})\Gamma^{\lam,x}_{\tau^{\lam,x}}\bigg].
\ee

Note that the exploration reward function $\rho$ plays a crucial role in the above relaxed control regularization.
If we set the exploration reward function $\rho\equiv 0$ or the relaxation parameter $\eps=0$, then one can  show that 
Dirac measures supported on the optimal  strategies of the original control problem \eqref{eq:ctrl} 
(see $\a^u$ defined as in \eqref{eq:ctrl}) are optimal control distributions of 
the relaxed control problem \eqref{eq:value_relax}, 
and the value function $v$ in \eqref{eq:value} 
will be equal to  the value function $v^\eps$ in \eqref{eq:value_relax}
(see Theorems \ref{thm:value_converge} and \ref{thm:ctrl_converge}).
Hence, to achieve the stability of the optimal control strategy for the relaxed control problem \eqref{eq:value_relax}, 
we shall impose the following condition on   the reward function $\rho$:

\begin{Assumption}\l{assum:rho}
There exists a  convex function $H\in C^2(\R^K)$ and a constant $c_0>0$, depending on $K$, 
such that for all $x,y\in\R^K$, we have
$H(x)-c_0\le \max_{k\in \cK}x_k\le H(x)$
and 
$\rho(y)=\sup_{z\in \R^K}\big(z^Ty-H(z)\big)$.
\end{Assumption}

We remark that (H.\ref{assum:rho}) is satisfied by most commonly used reward functions, 
including  Shannon's differential entropy proposed in \cite{ziebart2008,haarnoja2017, nachum2017,geist2019,wang2019}.
We refer the reader to the discussion at the end of this section 
for a detailed comparison of different reward functions.

Given a function $H:\R^K\to \R$, 
we define for each   $\eps\ge 0$ the function $H_\eps:\R^K\to \R$ such that 
for all $x=(x_1,\ldots, x_K)^T\in \R^K$,
\bb\l{eq:H_eps}
H_\eps(x)=
\begin{cases}
\eps H(\eps^{-1}x), & \eps>0,\\
\max\{x_1,\ldots, x_K\}, &\eps=0.
\end{cases}
\ee
Note that $(H_\eps)_{\eps\ge 0}$ are convex functions  if $H$ is a convex function.
The next lemma follows directly from (H.\ref{assum:rho}) and standard arguments in convex analysis, 
whose proof will be given  in Appendix \ref{appendix:lemmas} for completeness.

\begin{Lemma}\label{lemma:rho_convex}
Suppose (H.\ref{assum:rho}) holds, and let $(H_\eps)_{\eps\ge 0}$ be defined as in \eqref{eq:H_eps}.
Then we have that
\bn[(1)]
\item\l{item:rho} 
the function $\rho:\R^K\to \R\cup \{\infty\}$ is convex on $\R^K$, continuous relative to  $\Delta_K$, and 
satisfies that
$\rho(y)\in [-c_0,0]$ for all $y\in \Delta_K$ and $\rho(y)=\infty$ for all $y\in  (\Delta_K)^c$,
\item \l{item:H_eps}
it holds for all $x\in \R^K$ and $\eps> 0$ that 
$H_\eps(x)- \eps c_0\le H_0(x)\le H_\eps (x)$,
$H_\eps(x)=\max_{y\in \Delta_K}\big(y^Tx-\eps\rho(y)\big)$,
and 
$(\nabla H_\eps)(x)=\argmax_{y\in \Delta_K}\big(y^Tx-\eps\rho(y)\big)$.
Consequently, we have  for all $x,y\in \R^K$ and $\eps>0$ that
$|H_\eps(x)-H_\eps(y)|\le |x-y|$.
\en
\end{Lemma}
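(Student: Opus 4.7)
The plan is to recognise $\rho = H^*$ as the Legendre--Fenchel conjugate of the $C^2$ convex function $H$ in (H.\ref{assum:rho}), and then derive both parts from standard conjugacy theory.

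For Part (\ref{item:rho}), convexity and lower semicontinuity of $\rho$ on $\R^K$ come for free since $\rho$ is a supremum of affine functions in $y$. The value bounds on $\Delta_K$ follow from the sandwich in (H.\ref{assum:rho}): the estimate $y^Tz \le \max_k z_k \le H(z)$ gives $\rho(y) \le 0$, while evaluating the defining supremum at $z=0$ together with $0 \le H(0) \le c_0$ (the sandwich at $x=0$) gives $\rho(y) \ge -H(0) \ge -c_0$. To obtain $\rho \equiv +\infty$ off $\Delta_K$, I would produce unbounded directions: if some coordinate $y_j < 0$, take $z = -t e_j$ and use $H(z) \le \max_k z_k + c_0 = c_0$ to get $y^T z - H(z) \ge t|y_j| - c_0 \to \infty$ as $t \to \infty$; if instead $\sum_k y_k \ne 1$, take $z = t(1,\ldots,1)^T$ with the sign of $t$ chosen so that $(\sum_k y_k - 1)t \to +\infty$, again bounding $H(z)$ by the sandwich. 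Continuity of $\rho$ relative to $\Delta_K$ then follows from the general fact that a proper convex function which is finite on a polyhedral effective domain is continuous relative to that domain.

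For Part (\ref{item:H_eps}), the inequality $H_\eps(x) - \eps c_0 \le H_0(x) \le H_\eps(x)$ is obtained by substituting $\eps^{-1}x$ into the sandwich in (H.\ref{assum:rho}) and multiplying by $\eps > 0$. The representation $H_\eps(x) = \max_{y\in\Delta_K}(y^Tx - \eps\rho(y))$ is the Fenchel--Moreau biconjugate theorem: since $H$ is convex and everywhere finite/continuous, $H = H^{**}$; the supremum restricts to $\Delta_K$ because $\rho = +\infty$ outside, and is attained because $\Delta_K$ is compact and $\rho$ is continuous on it by Part (\ref{item:rho}). Rescaling via $H_\eps(x) = \eps H(\eps^{-1}x)$ yields the stated identity. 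The gradient formula then follows from Fenchel's equality $y \in \partial H(z) \Leftrightarrow H(z) + \rho(y) = y^Tz$: since $H \in C^2$, $\partial H(z) = \{\nabla H(z)\}$, so the maximiser of $y \mapsto y^Tz - \rho(y)$ is unique and equal to $\nabla H(z)$, and the chain rule gives $\nabla H_\eps(x) = \nabla H(\eps^{-1}x)$. Finally, the Lipschitz bound is a standard envelope estimate: picking $w^\star \in \argmax_{y\in \Delta_K}(y^Tx - \eps\rho(y))$, one has $H_\eps(x) - H_\eps(y) \le (w^\star)^T(x-y) \le |x-y|$, where the last step uses $|w^\star| \le |w^\star|_1 = 1$ since $w^\star \in \Delta_K$; symmetry closes the estimate.

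The only delicate point is continuity of $\rho$ on the relative boundary of $\Delta_K$, since lower semicontinuity is automatic from the conjugate representation while upper semicontinuity at non-interior points requires extra care. I would handle this either by quoting the polyhedral-domain continuity result, or directly: join a boundary point $y_0 \in \Delta_K$ to a relative interior point by a line segment, use convexity and the value bound from Part (\ref{item:rho}) to obtain upper semicontinuity along the segment, and combine with the automatic lower semicontinuity to conclude.
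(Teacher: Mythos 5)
Your proof is correct and rests on the same conjugacy machinery as the paper's, but two sub-arguments take a genuinely different route. First, to show $\rho\equiv\infty$ off $\Delta_K$ you produce unbounded directions (taking $z=-te_j$ when $y_j<0$, and $z=t\mathbf{1}$ when $\sum_k y_k\neq 1$), whereas the paper instead sandwiches $\rho$ between the explicit conjugates of $H_0$ and $H_0+c_0$ and reads off the result from the known formula for the conjugate of the pointwise max. Both are clean; yours is more self-contained, the paper's is shorter if one already knows the conjugate of $H_0$. Second, and more notably, your Lipschitz argument is the classical envelope estimate $H_\eps(x)-H_\eps(y)\le (w^\star)^T(x-y)\le \|w^\star\|_2\,|x-y|\le\|w^\star\|_1\,|x-y|=|x-y|$, which is elementary and avoids the paper's detour through $\sup_x|\nabla H_\eps(x)|\le\max_{y\in\Delta_K}|y|$ followed by two further Rockafellar corollaries (32.3.1 and 18.3.1) to identify the maximizer as a vertex. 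Your route to the Lipschitz constant is both shorter and needs fewer external citations. The remaining steps (Fenchel--Moreau for $H=H^{**}$, the gradient formula via Fenchel's equality, continuity of $\rho$ relative to the polyhedral $\Delta_K$) coincide in substance with the paper's invocations of Rockafellar Theorems 12.2, 23.5 and 10.2.
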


We proceed to study the corresponding HJB equation of the relaxed control problem \eqref{eq:value_relax}, 
which plays a crucial role in our subsequent analysis.
For each $\lambda=(\lambda^1,\ldots,\lambda^K)^T\in \Delta_K$, 
let $f^\lambda:\cO\to \R$ 
be the function satisfying
for all $x\in \cO$ that 
$f^\lambda(x)=\sum_{k=1}^Kf(x,\ba_k)\lambda^k=\lambda^T\bf(x)$ (with $\bf$ defined as  in \eqref{eq:hjb}),
and 
$\cL^\lambda$ be the elliptic operator satisfying for all $\phi\in C^2(\cO)$ and $x\in \cO$ that
\begin{align}\l{eq:L_lam}
\begin{split}
\cL^\lambda\phi(x)
&=
\frac{1}{2}\big(\tilde{\sigma}(x,\lambda)\tilde{\sigma}^T(x,\lambda)\big)^{ij}\p_{ij}\phi(x)+\tilde{b}^i(x,\lambda)\p_i\phi(x)-
\bigg(\sum_{k=1}^Kc(x,\ba_k)\lambda^k\bigg)\phi(x)\\
&=
\sum_{k=1}^K
\bigg(
\frac{1}{2}\big({\sigma}(x,\ba_k){\sigma}^T(x,\ba_k)\big)^{ij}\p_{ij}\phi(x)+{b}^i(x,\ba_k)\p_i\phi(x)-
c (x,\ba_k)\phi(x)
\bigg)\lambda^k
=\lambda^T\bL\phi(x),
\end{split}
\end{align}
where 
we have used the definition of the elliptic operators
 $\bL=(\cL_k)_{k\in\cK}$ (cf.~\eqref{eq:L_k}),
 and the definition of the functions  $\tilde{b}$ and $\tilde{\sigma}$ (cf.~Lemma \ref{lemma:sqrt_A}).

Since
 the diffusion coefficient of SDE \eqref{eq:sde_relax}  is  non-degenerate (see Lemma \ref{lemma:sqrt_A})
 and all coefficients of the relaxed control problem \eqref{eq:value_relax} are continuous on $\ol{\cO}\t \Delta_K$,
 a formal application of 
the dynamic programming principle 
(see e.g.~\cite{fleming2006, buckdahn2016} and references within)
enables us to  associate the relaxed  control problem \eqref{eq:value_relax} with the following HJB equation:
$$
\max_{\lambda\in \Delta_K}
\big[
\cL^\lambda u^\eps+f^\lambda-\eps\rho(\lambda)
\big]=0
\q\textnormal{in $\cO$}, 
\q\q
u^\eps=g \q \textnormal{on $\p\cO$.}
$$
Moreover, \eqref{eq:L_lam} and Lemma \ref{lemma:rho_convex}\eqref{item:H_eps} imply that 
the above Dirichlet problem is equivalent to 
\bb\l{eq:hjb_relax}
F_\eps[u^\eps]
\coloneqq 
H_\eps
(
\bL u^\eps+\bf)=0
\q\textnormal{in $\cO$}, 
\q\q
u^\eps=g \q \textnormal{on $\p\cO$,}
\ee
where the function $H_\eps$ is defined as in \eqref{eq:H_eps},
and $\bL$, $\bf$ are defined as those in \eqref{eq:hjb}.

In order to rigorously justify the connection between \eqref{eq:value_relax} and \eqref{eq:hjb_relax},
we establish the well-posedness of classical solutions to \eqref{eq:hjb_relax} in Theorem \ref{thm:wp_relax},
and then prove a verification result in Theorem \ref{thm:verification_relax}.

We need the following proposition, which gives an \textit{a priori} estimate of classical solutions to 
\eqref{eq:hjb_relax}.
We postpone the proof to Appendix \ref{appendix:lemmas}, 
which adapts the  technique in \cite[Theorem 7.5 on p.~127]{chen1998} to HJB equations with compact control sets, and reduces the problem to 
an \textit{a priori} estimate for
 HJB equations involving only  principal  terms.

\begin{Proposition}\l{prop:Holder_bdd}
Suppose  (H.\ref{assum:D}) and (H.\ref{assum:rho}) hold,
and let $M=\sup_{i,j,k}|\sigma^{ij}_k|_{0;\ol{\cO}}$.
Then 
there exists a constant $\b_0=\b_0(n,\nu,M)\in (0,1)$,
 such that 
 it holds  for all $\b\in (0,\min(\b_0,\theta)]$ that,
if 
$u^\eps\in C^{2,\b}(\ol{\cO})$ is a solution to 
the Dirichlet problem \eqref{eq:hjb_relax}
with   parameter $\eps>0$,
then $u^\eps$ satisfies the  estimate that 
 $ |u^\eps|_{2,\b}\le C(|g|_{2,\b}+\eps c_0+1)$,
 where 
the constant $C$ depends only on  $n$, $\nu$, $\Lambda$, $\b$ and  $\cO$.
\end{Proposition}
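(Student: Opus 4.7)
The plan is a three-step bootstrap: first establish an $L^\infty$ bound on $u^\eps$, then derive an intermediate Krylov--Safonov interior-and-boundary $C^{1,\b}$ estimate, and finally invoke an Evans--Krylov type $C^{2,\b_0}$ bound for the principal-part-only HJB operator. Since every coefficient in \eqref{eq:hjb_relax} apart from $\eps\rho(\lambda)$ is $\eps$-independent, and Lemma \ref{lemma:rho_convex}\eqref{item:H_eps} gives the sandwich $H_\eps-\eps c_0\le H_0\le H_\eps$, I expect the factor $\eps c_0$ to enter the final estimate only through the $L^\infty$ step.

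For the $L^\infty$ bound, I would rewrite \eqref{eq:hjb_relax} using Lemma \ref{lemma:rho_convex}\eqref{item:H_eps} and \eqref{eq:L_lam} as $\sup_{\lambda\in\Delta_K}(\cL^\lambda u^\eps+f^\lambda-\eps\rho(\lambda))=0$, and note that $\rho\in[-c_0,0]$ on $\Delta_K$ combined with linearity of $\lambda\mapsto\lambda^T(\bL u^\eps+\bf)$ on the simplex yields the two-sided pointwise inequality $\max_{k\in\cK}(\cL_ku^\eps+f_k)\in[-\eps c_0,0]$. Constructing a smooth barrier $\psi$ on $\ol\cO$ with $\cL_k\psi\le -1$ uniformly in $k$ (which exists by \eqref{eq:elliptic} and the boundedness of $\cO$) and comparing $u^\eps$ with $G\pm C_1\psi$, where $G\in C^{2,\theta}(\ol\cO)$ extends $g$ and $C_1$ depends linearly on $|g|_0+\Lambda+\eps c_0$, the classical maximum principle (applicable because each $c_k\ge 0$) gives $|u^\eps|_0\le C(|g|_0+\eps c_0+1)$.

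With the $L^\infty$ bound in hand, I would move the lower-order terms into a forcing and rewrite \eqref{eq:hjb_relax} as
\begin{equation*}
\sup_{\lambda\in\Delta_K}\bigl(a_\lambda^{ij}\p_{ij}u^\eps-\psi^\lambda\bigr)=0,\qquad \psi^\lambda=c_\lambda u^\eps-b_\lambda^i\p_iu^\eps-f_\lambda+\eps\rho(\lambda).
\end{equation*}
The operator on the left is uniformly elliptic with ellipticity constants determined only by $\nu$ and $M$, and it is convex in $D^2u^\eps$, so a Krylov--Safonov interior-and-boundary $C^{1,\b}$ estimate yields $|u^\eps|_{1,\b}\le C(|u^\eps|_0+|g|_{2,\b}+\sup_\lambda|f_\lambda-\eps\rho(\lambda)|_0)$ for a universal exponent $\b\in(0,\b_0)$ with $\b_0=\b_0(n,\nu,M)$. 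Combined with the $L^\infty$ step and the $C^\b$ control on $(b_k,c_k,f_k)$ from (H.\ref{assum:D}), this gives $\sup_\lambda|\psi^\lambda|_\b\le C(|g|_{2,\b}+\eps c_0+1)$. Applying the Evans--Krylov type boundary estimate for principal-term HJB equations on $C^{2,\theta}$ domains, such as \cite[Theorem 7.5, p.~127]{chen1998}, with forcing $\psi^\lambda$ and boundary datum $g$, produces $|u^\eps|_{2,\b}\le C(|g|_{2,\b}+\sup_\lambda|\psi^\lambda|_\b)$ and hence the stated bound.

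The main obstacle is to keep $\b_0$ depending only on the structural triple $(n,\nu,M)$ and the $\eps c_0$-dependence linear throughout the bootstrap. This forces the use of versions of the Krylov--Safonov $C^{1,\b}$ and Evans--Krylov $C^{2,\b_0}$ estimates whose exponents are universal (depending only on dimension and ellipticity), and a careful verification that the principal-part reduction and the Chen--Wu estimate apply to our HJB operator parametrised by the compact simplex $\Delta_K$, with the convexity required by Evans--Krylov supplied by Lemma \ref{lemma:rho_convex}\eqref{item:H_eps}.
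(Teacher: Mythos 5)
Your proposal is essentially the same as the paper's: both reduce to the principal-part-only HJB estimate of Chen--Wu (the paper states this as Lemma~\ref{lemma:a_priori}, \cite[Theorem~7.2]{chen1998}) by freezing the lower-order terms, both observe that the $\eps c_0$ dependence enters only through the $L^\infty$ bound (obtained from the sandwich $H_\eps-\eps c_0\le H_0\le H_\eps$ plus the maximum principle), and both track $\b_0=\b_0(n,\nu,M)$ as the universal exponent of the principal-part estimate. The route through the middle differs in one respect worth noting. You insert an explicit Krylov--Safonov up-to-boundary $C^{1,\b}$ estimate between the $L^\infty$ bound and the Evans--Krylov step. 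The paper instead applies Lemma~\ref{lemma:a_priori} to the family $F_\phi(x,r)=H_\eps\big((a_k^{ij}(x)r_{ij}+b_k^i(x)\p_i\phi(x)-c_k(x)\phi(x)+f_k(x))_{k\in\cK}\big)$, obtains $[u^\phi]_{2,\b}\le C(|u^\phi|_0+|g|_{2,\b}+|\phi|_{1,\b}+1)$, sets $\phi=u^\eps$, and then absorbs the resulting $|u^\eps|_{1,\b}$ term via the interpolation inequality $|u^\eps|_{1,\b}\le\eta|u^\eps|_{2,\b}+C_\eta|u^\eps|_0$. That avoids invoking a separate boundary $C^{1,\b}$ theorem, and it also avoids a formal awkwardness in your version: you treat $\psi^\lambda=c_\lambda u^\eps-b_\lambda^i\p_iu^\eps-f_\lambda+\eps\rho(\lambda)$ as if it were a fixed forcing term, whereas it depends on $u^\eps$; the paper's explicit parametrization of $F_\phi$ by $\phi$, followed by the substitution $\phi=u^\eps$, is the rigorous version of that step. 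These are presentational rather than substantive differences, and the exponent and constant dependencies you claim agree with what the paper derives.
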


\begin{Theorem}\l{thm:wp_relax}
Suppose (H.\ref{assum:D}) and (H.\ref{assum:rho}) hold,
 let $\eps>0$ and  $M=\sup_{i,j,k}|\sigma^{ij}_k|_{0;\ol{\cO}}$.
Then
the Dirichlet problem \eqref{eq:hjb_relax} admits a unique solution $u^\eps\in C(\ol{\cO}) \cap C^2(\cO)$. 
Moreover, there exists a constant $\b_0=\b_0(n,\nu,M)\in (0,1)$
and a unique  function $\lambda^{u^\eps}:\ol{\cO}\to \Delta_K$
 such that 
$u^\eps\in  C^{2,\min(\b_0,\theta)}(\ol{\cO})$,
$\lambda^{u^\eps}\in C^{\min(\b_0,\theta)}(\ol{\cO},\R^K)$
and
\bb\l{eq:ctrl_relax}
\lambda^{u^\eps}(x)=
\argmax_{\lambda\in \Delta_K}
\big(
\cL^\lambda u^\eps(x)+f^\lambda(x)-\eps\rho(\lambda)
\big)
=(\nabla H_\eps)(\bL u^\eps(x)+\bf(x))
\q \fa x\in \ol{\cO}.
\ee
In particular, one can take the same constant $\b_0$ as  in Proposition  \ref{prop:Holder_bdd}.
\end{Theorem}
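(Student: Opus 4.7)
The proof naturally splits into uniqueness, existence with $C^{2,\min(\b_0,\theta)}(\ol{\cO})$-regularity, and the characterisation of the feedback control; the argument parallels that of Theorem \ref{thm:wp}, but exploits the key upgrade that $H_\eps\in C^2(\R^K)$ (since $H\in C^2$) rather than merely Lipschitz.

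\textbf{Uniqueness.} Given two classical solutions $u_1,u_2\in C(\ol{\cO})\cap C^2(\cO)$, applying the fundamental theorem of calculus to $F_\eps[u_1]-F_\eps[u_2]=0$ produces $\eta(x)^T \bL(u_1-u_2)(x)=0$ for all $x\in\cO$, where $\eta(x)=\int_0^1(\nabla H_\eps)(\bL u_2+\bf+s\bL(u_1-u_2))(x)\,ds$. By Lemma \ref{lemma:rho_convex}\eqref{item:H_eps}, $\nabla H_\eps$ takes values in $\Delta_K$, so $\eta(x)\in\Delta_K$ for every $x$, and $\eta^T\bL$ is a linear uniformly elliptic operator with nonnegative zeroth-order coefficient; the classical maximum principle then forces $u_1\equiv u_2$.

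\textbf{Existence.} Equation \eqref{eq:hjb_relax} is a uniformly elliptic convex fully nonlinear equation, with convexity of $H_\eps$ provided by Lemma \ref{lemma:rho_convex}. I would construct $u^\eps\in C^{2,\min(\b_0,\theta)}(\ol{\cO})$ by the method of continuity, homotoping $F_\eps$ to a linear uniformly elliptic operator through $F^t=tF_\eps+(1-t)L_0$, $t\in[0,1]$. Openness of the solvable set uses the implicit function theorem in H\"{o}lder spaces: since $H_\eps\in C^2$, $F^t$ is Fr\'{e}chet differentiable from $C^{2,\b}(\ol{\cO})$ to $C^\b(\ol{\cO})$ and its linearisation is a linear uniformly elliptic operator, invertible by Schauder theory. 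Closedness and the uniform a priori $C^{2,\b_0}$-bound -- which also selects the exponent $\b_0=\b_0(n,\nu,M)$ -- are furnished by Proposition \ref{prop:Holder_bdd}, combined with standard Schauder boundary estimates using $g\in C^{2,\theta}(\ol{\cO})$ and $\p\cO\in C^{2,\theta}$; an interpolation with $\theta$ upgrades the boundary exponent to $\min(\b_0,\theta)$.

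\textbf{Feedback control and main obstacle.} Combining $\cL^\lambda u^\eps+f^\lambda=\lambda^T(\bL u^\eps+\bf)$ from \eqref{eq:L_lam} with Lemma \ref{lemma:rho_convex}\eqref{item:H_eps} yields $\argmax_{\lambda\in\Delta_K}(\cL^\lambda u^\eps(x)+f^\lambda(x)-\eps\rho(\lambda))=(\nabla H_\eps)(\bL u^\eps(x)+\bf(x))$, a single point since $H_\eps\in C^1$; this simultaneously gives the existence, uniqueness, and explicit formula \eqref{eq:ctrl_relax} for $\lambda^{u^\eps}$. Since $u^\eps\in C^{2,\min(\b_0,\theta)}(\ol{\cO})$ and the coefficients of $\bL$ and $\bf$ lie in $C^\theta(\ol{\cO})$, the map $\bL u^\eps+\bf$ belongs to $C^{\min(\b_0,\theta)}(\ol{\cO},\R^K)$, and composition with the $C^1$ map $\nabla H_\eps$ preserves H\"{o}lder regularity, giving $\lambda^{u^\eps}\in C^{\min(\b_0,\theta)}(\ol{\cO},\R^K)$. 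The principal obstacle is the a priori $C^{2,\b_0}$-estimate of Proposition \ref{prop:Holder_bdd}, which requires Evans-Krylov-type regularity for convex fully nonlinear elliptic equations propagated up to the boundary; once this estimate is in hand, the method of continuity, Schauder theory, and the smooth composition step are standard.
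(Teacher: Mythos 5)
Your proof is correct and follows essentially the same route as the paper: uniqueness via the fundamental theorem of calculus and the classical maximum principle (which the paper obtains by referring back to the argument in Theorem \ref{thm:wp}), existence and $C^{2,\min(\b_0,\theta)}$-regularity via the method of continuity with the \textit{a priori} bound from Proposition \ref{prop:Holder_bdd}, and H\"older regularity of $\lambda^{u^\eps}$ by composing the $C^{\min(\b_0,\theta)}$-function $\bL u^\eps+\bf$ with the locally Lipschitz map $\nabla H_\eps$. The paper is considerably terser at the method-of-continuity step (it simply cites \cite[Theorem 5.1 on p.~116]{chen1998}), whereas you spell out the openness/closedness structure; the only slight imprecision is your remark about ``interpolation with $\theta$ upgrading the boundary exponent,'' which is unnecessary since Proposition \ref{prop:Holder_bdd} already delivers the bound in $C^{2,\min(\b_0,\theta)}(\ol{\cO})$ directly.
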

\begin{proof}
One can deduce by 
similar arguments as those for Theorem \ref{thm:wp}
and the classical maximum principle that 
\eqref{eq:hjb_relax} admits a unique classical solution in $C(\ol{\cO}) \cap C^2(\cO)$.
Moreover, 
 by using the \textit{a priori} bound of classical solutions in Proposition \ref{prop:Holder_bdd},
we can 
establish the existence and  regularity of the classical solution $u^\eps$ to \eqref{eq:hjb_relax} based on the method of continuity; see \cite[Theorem 5.1 on p.~116]{chen1998}.

Now let $u^\eps\in  C^{2,\b}(\ol{\cO})$ be the solution to \eqref{eq:hjb_relax}
with some $\b\in (0,\theta]$.
The continuity of $\cL^\lambda, f^\lambda$ and $\rho$ on $\Delta_K$,
and Lemma \ref{lemma:rho_convex}\eqref{item:H_eps}
ensure that
the function  $\lambda^{u^\eps}$ is well-defined on $\ol{\cO}$, 
and has the expression 
$\lambda^{u^\eps}=(\nabla H_\eps)(\bL u^\eps+\bf)$.
Note that, it holds for any given $\phi_1,\phi_2\in C^{\b}(\ol{\cO})$ that
$\phi_1\phi_2\in C^{\b}(\ol{\cO})$. 
Hence the H\"older continuity of the coefficients (see (H.\ref{assum:D})) 
implies that $\bL u^\eps+\bf\in C^\b(\ol{\cO},\R^K)$.
We can then easily deduce from the local Lipschitz continuity of $\nabla H_\eps:\R^K\to \R^K$ that 
$\lambda^{u^\eps}\in C^\b(\ol{\cO},\R^K)$.
\end{proof}

 The next theorem shows that the function \eqref{eq:ctrl_relax} is an optimal feedback  control of \eqref{eq:value_relax}, which is defined similarly to  Definition \ref{def:feedback}.
The proof of this statement is similar to that of Theorem \ref{thm:verification} and hence omitted.  


\begin{Theorem}\l{thm:verification_relax}
Suppose (H.\ref{assum:D}) and (H.\ref{assum:rho}) hold.
Let $\eps>0$, 
$v^\eps:\ol{\cO}\to \R$ be the value function defined as in \eqref{eq:value_relax},
$u^\eps\in C(\ol{\cO}) \cap C^2(\cO)$ be the solution to the Dirichlet problem \eqref{eq:hjb_relax},
and $\lambda^{u^\eps}:\ol{\cO}\to \Delta_K$ be the function defined as in \eqref{eq:ctrl_relax}.
Then 
$u^\eps(x)=v^\eps(x)$ 
for all $x\in \ol{\cO}$, 
and 
$\lambda^{u^\eps}$ is an optimal feedback control of \eqref{eq:value_relax}.
\end{Theorem}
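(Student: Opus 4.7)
The plan is to mirror the strategy of Theorem \ref{thm:verification}, combining It\^{o}'s formula applied to the classical solution $u^\eps\in C^{2,\min(\b_0,\theta)}(\ol{\cO})$ with Krylov's weak existence theorem for non-degenerate SDEs. I would establish the two inequalities $u^\eps\ge v^\eps$ and $u^\eps\le v^\eps$ separately.

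For the upper bound, fix $\pi=(\Om,\cF,\{\cF_t\}_{t\ge 0},\bP,W)\in\Pi_{\textnormal{ref}}$, $\lam\in\cM_\pi$, and $x\in\ol{\cO}$. Since $\tilde{b}(\cdot,\lam)$ and $\tilde{\sigma}(\cdot,\lam)$ are uniformly Lipschitz in space (Lemma \ref{lemma:sqrt_A}), \eqref{eq:sde_relax} has a unique strong solution $X^{\lam,x}$. Applying It\^{o}'s formula to $t\mapsto \Gamma^{\lam,x}_t u^\eps(X^{\lam,x}_t)$ on $[0,\tau^{\lam,x}\wedge T]$ for $T>0$ and taking expectations yields
\begin{equation*}
u^\eps(x)=\ex^{\bP}\!\big[\Gamma^{\lam,x}_{\tau^{\lam,x}\wedge T}\,u^\eps(X^{\lam,x}_{\tau^{\lam,x}\wedge T})\big]-\ex^{\bP}\!\Big[\int_0^{\tau^{\lam,x}\wedge T}\!\Gamma^{\lam,x}_s\,\cL^{\lam_s}u^\eps(X^{\lam,x}_s)\,ds\Big],
\end{equation*}
the local-martingale term vanishing because $\nabla u^\eps$ and $\tilde{\sigma}$ are bounded on $\ol{\cO}$. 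The HJB equation \eqref{eq:hjb_relax} combined with Lemma \ref{lemma:rho_convex}\eqref{item:H_eps} gives $\cL^{\lam_s}u^\eps+f^{\lam_s}-\eps\rho(\lam_s)\le 0$ pointwise, so replacing $-\cL^{\lam_s}u^\eps$ with $f^{\lam_s}-\eps\rho(\lam_s)$ produces a lower bound. A standard quadratic test-function argument (exploiting the uniform non-degeneracy $\tilde{\sigma}\ge\sqrt{\nu}I_n$ and boundedness of $\cO$) gives $\ex^{\bP}[\tau^{\lam,x}]<\infty$ with a bound independent of $\lam$; sending $T\to\infty$, using the boundary condition $u^\eps|_{\p\cO}=g$, the bound $\rho\in[-c_0,0]$ on $\Delta_K$ from Lemma \ref{lemma:rho_convex}\eqref{item:rho}, and dominated convergence, I arrive at $u^\eps(x)\ge J^\eps(x,\lam)$, where $J^\eps$ denotes the reward functional appearing in \eqref{eq:value_relax}. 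Taking the supremum over $\pi$ and $\lam$ yields $u^\eps\ge v^\eps$.

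For the matching lower bound I would test against the feedback $\lam^{u^\eps}$ from \eqref{eq:ctrl_relax}. The obstacle is that $\lam^{u^\eps}$ is only H\"older continuous, so the closed-loop coefficients $\tilde{b}(\cdot,\lam^{u^\eps}(\cdot))$ and $\tilde{\sigma}(\cdot,\lam^{u^\eps}(\cdot))$ are generally not Lipschitz and strong existence for \eqref{eq:sde_relax} need not hold. I would invoke the weak existence result of \cite[Theorem 2.6.1 on p.~87]{krylov1980}, restated in \cite[Theorem 6.13 on p.~46]{yong1999}, whose hypotheses—uniform non-degeneracy and bounded measurable coefficients—are supplied by Lemma \ref{lemma:sqrt_A}. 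This furnishes a reference system $\pi^x$ and a continuous adapted process $X^{\lam^{u^\eps},x}$ solving the feedback SDE. Running the same It\^{o} computation along this trajectory, the pointwise maximization property in \eqref{eq:ctrl_relax} turns the previous inequality into an equality, giving $u^\eps(x)=J^\eps(x,\lam^{u^\eps})\le v^\eps(x)$. Combined with the upper bound this identifies $u^\eps=v^\eps$ and shows $\lam^{u^\eps}$ is an optimal feedback control.

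The main obstacle is precisely this weak-existence step for the closed-loop SDE: H\"older-only feedback rules out classical strong existence, and one must lean on the strict positive definiteness $\tilde{\sigma}\in\bS^n_{>}$ supplied by Lemma \ref{lemma:sqrt_A}—the very feature emphasized in the remark preceding the theorem—so that Krylov's measurable-coefficient theory applies. The remaining bookkeeping (finiteness of $\ex^{\bP}[\tau^{\lam,x}]$, integrability of the running reward and of $\rho(\lam_s)$, and the passage $T\to\infty$) is routine given the boundedness of $\cO$, the range of $\rho$ on $\Delta_K$, and $c\ge 0$.
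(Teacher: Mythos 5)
Your proof is correct and follows essentially the same route the paper intends: the paper explicitly states that the proof of Theorem \ref{thm:verification_relax} is "similar to that of Theorem \ref{thm:verification} and hence omitted," and you have carried out precisely that adaptation — It\^{o}'s formula plus the HJB inequality for the upper bound $u^\eps\ge v^\eps$, and Krylov's weak-existence theorem for non-degenerate bounded measurable coefficients (enabled by $\tilde{\sigma}\ge\sqrt{\nu}I_n$ from Lemma \ref{lemma:sqrt_A}) for the closed-loop SDE driven by the merely H\"{o}lder feedback $\lambda^{u^\eps}$. You also correctly pinpoint the role of Lemma \ref{lemma:sqrt_A} as the mechanism that makes the verification theorem hold unconditionally here, which is exactly the point the remark preceding the theorem emphasizes.
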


\begin{Remark}
Theorem \ref{thm:wp_relax} shows that 
the feedback control  $\lambda^{u^\eps}$ is uniquely defined and H\"older continuous.
This improved regularity makes it  easier to implement the relaxed control 
$\lambda^{u^\eps}$ in practice, compared to the original  (merely measurable)
feedback control $\a^u$ (cf.~Theorem \ref{thm:wp}).

\end{Remark}

We end this section with a remark about 
possible choices of reward functions. 
Generally speaking,
we shall choose a reward function $\rho$ whose generating function $H$ and its gradient $\nabla H$ can be efficiently evaluated,
such that one can design an efficient algorithm to solve the relaxed control problem \eqref{eq:value_relax}
(see e.g.~\cite{ziebart2008,haarnoja2017, nachum2017,geist2019,ito2019}).
A common choice of   reward functions  in the literature is the following entropy-type reward function (see e.g.~\cite{kort1972,peng1998,peng1999,wang2019}):
$$
\rho_{\textnormal{en}}(y)=
\begin{cases}
\sum_{k=1}^K y_k\ln (y_k), & y\in \Delta_K,\\
\infty, & y\in  (\Delta_K)^c,
\end{cases}
$$
whose generating function is $H_{\textnormal{en}}(x)=\ln \sum_{k=1}^K \exp({x_k})$, $x\in \R^K$.
One can  show that $H_{\textnormal{en}}\in C^\infty(\R^K)\cap C^{2,1}(\R^K)$, and 
it satisfies 
(H.\ref{assum:rho}) with $c_0=\ln K$ 
(see e.g.~\cite{peng1999}).

The advantage of the  entropy reward function is that 
both $H_{\textnormal{en}}$ and $\nabla H_{\textnormal{en}}$ are given in  closed form,
and they can be naturally extended to continuous action spaces $\bA$ (see e.g.~\cite{wang2019}). 
However,
it is important to notice that 
the evaluation of $H_{\textnormal{en}}$ and $\nabla H_{\textnormal{en}}$
involves exponentials.  
Hence, 
when the relaxation parameter $\eps$ is small, 
a naive implementation of iterative algorithms  for solving \eqref{eq:hjb_relax},
which in general involves  evaluating 
the value and inverse of
$H_{\textnormal{en}}$
and $\nabla H_{\textnormal{en}}$
  at a large argument
$z=(\bL u^\eps(x)+\bf(x))/\eps\in \R^K$ 
with $ x\in \cO$, 
may lead to 
unreliable results
due to unstable floating-point arithmetic;
see 
\cite[Example 4.2]{birbil2005}
and 
\cite{blanchard2019}
for more details.
\color{black}
Moreover, since $\nabla H_{\textnormal{en}}(x)\in (0,1)^K$ for all $x\in \R^K$,
the  optimal relaxed control of \eqref{eq:value_relax} 
may converge to 
 the optimal control  of \eqref{eq:value} with a very slow rate as the relaxation parameter $\eps$ tends to zero.

Alternatively, 
by virtue of the fact that 
only the   generating function $H$  and its gradient are involved in the HJB equation \eqref{eq:hjb_relax} and the feedback control \eqref{eq:ctrl_relax},
we can also obtain a  reward function $\rho$ by 
directly constructing  a $K$-dimensional  function $H$ 
based on a recursive application of smoothing functions for the two-dimensional max function.
 For instance, we can start with the following two-dimensional smoothing functions (see e.g.~\cite{chen1995,zang1980}): for $x=(x_1,x_2)^T\in \R^2$,
\begin{align}
{H}_{\textnormal{chks}}(x)&=\f{\sqrt{(x_1-x_2)^2+1}+x_1+x_2}{2},
\l{eq:H_chks}\\
{H}_{\textnormal{zang}}(x)&=
\begin{cases}
x_1,  & x_2-x_1<-1/2,\\
-\f{1}{2}(x_1-x_2)^4+\f{3}{4}(x_1-x_2)^2+\f{x_1+x_2}{2}+\f{3}{32}, &|x_1-x_2|\le 1/2,\\
x_2, &x_2-x_1>1/2. 
\end{cases}
\l{eq:H_zang}
\end{align}
Then, for any given $K\ge 3$,  by using  the fact that 
$\max_{k\in \cK}x_k=\max(\max_{i\in \cK_1}x_i,\max_{j\in \cK_2}x_j)$,
with $\cK_1=\{1,\ldots, K_0\}$, $\cK_2=\{  K_0+1,\ldots, K\}$ and $K_0=\lfloor (K+1)/2\rfloor$,
we can express the $K$-dimensional max function as a nested application of
the two-dimensional max function and one-dimensional identity function.
Hence, by replacing the two-dimensional max function with 
the two-dimensional smoothing function \eqref{eq:H_chks} (resp.~\eqref{eq:H_zang})
in the recursive expression,
we can obtain the $K$-dimensional smoothing function $H_{\textnormal{chks}}\in C^{\infty}(\R^K)\cap C^{2,1}(\R^K)$ 
(resp.~$H_{\textnormal{zang}}\in C^{2,1}(\R^K)$).
It has been shown in \cite[Lemma 3.3]{birbil2005} that for any given $K\ge 2$, 
both functions $H_{\textnormal{chks}}$ and $H_{\textnormal{zang}}$ satisfy (H.\ref{assum:rho}) with 
$c_0=(\log_2(K-1)+1)/2$ for ${H}_{\textnormal{chks}}$,  
and 
$c_0=3(\log_2(K-1)+1)/32$ for ${H}_{\textnormal{zang}}$.

Note that, the evaluation of $H_{\textnormal{chks}}$, $H_{\textnormal{zang}}$ and their gradients 
only
involves   square-roots and multiplications, 
hence
they are numerically more stable than  
the entropy-type smoothing $H_{\textnormal{en}}$ (see \cite{birbil2005}).
More importantly, 
since $H_{\textnormal{zang}}$ only modifies the function $H_0$ locally near the non-differentiable points,
we can  determine the optimal control of \eqref{eq:value}  precisely 
from the optimal control of \eqref{eq:value_relax}
without sending the relaxation parameter $\eps$ to zero (see Theorem \ref{thm:ctrl_converge} and Remark \ref{rmk:exact_regularization} for details).

Figure \ref{fig:comparision_reward} compares the functions $H_{\textnormal{en}},H_{\textnormal{zang}}:\R^3\to \R$ and 
the reward functions generated by them. 
One can clearly see from Figure \ref{fig:comparision_reward}  (left) that $H_{\textnormal{en}}$ 
substantially modifies the pointwise maximum function $H_0$ everywhere, while $H_{\textnormal{zang}}$  only performs a  modification of  $H_0$ locally near the kinks.
For both functions, the difference from $H_0$ peaks around the 
 the points where
$ \argmax_{k\in \cK} x_k $ is not a singleton. 
 Such points correspond to the regions where 
 the agent of the  control problem \eqref{eq:value}
 cannot make a clear decision based on the current model,
since  two or more different actions would result in a very similar reward.


 Figure \ref{fig:comparision_reward} (right) depicts the reward functions $\rho_{\textnormal{en}}(y_1,y_2,y_3)$
 and $\rho_{\textnormal{zang}}(y_1,y_2,y_3)$
 with $y_3=1-y_1-y_2$, for all $(y_1,y_2)\in\cC\coloneqq \{(y_1,y_2)\in \R^2\mid 0\le y_1,y_2\le 1, y_1+y_2\le 1\}$. 
 The point $(1/3,1/3,1/3)$ corresponds to the pure exploration strategy,
 i.e., the uniform distribution on the action space $\bA=\{\ba_1,\ba_2,\ba_3\}$,
 while the vertices of $\cC$ 
 corresponds to the pure exploitation strategy,
 i.e., the Dirac measures supported on some $\ba_i\in \bA$.
   Both functions achieve their minimum around the point 
 $(1/3,1/3,1/3)$, which indicates that the exploration reward functions encourage 
 the controller of the relaxed control problem to explore further,  
especially when it is difficult to choose a unique optimal  action based on the current model.

Note that, 
by comparing the values of the reward functions 
near the point  $(1/3,1/3,1/3)$
and 
near the vertices of $\cC$, 
we see that
$\rho_{\textnormal{en}}$ in general gives more   rewards for exploration than $\rho_{\textnormal{zang}}$.
Consequently, to recover the value function and optima control of  \eqref{eq:value},
we have to take a smaller 
 relaxation parameter for \eqref{eq:value} with $\rho_{\textnormal{en}}$
than that for \eqref{eq:value} with $\rho_{\textnormal{zang}}$,
which could cause a numerical instability issue due to 
the exponentials in  $H_{\textnormal{en}}$ and $\nabla H_{\textnormal{en}}$  (see e.g.~\cite{birbil2005}).

\color{black}

\begin{figure}[!htb]
    \centering
    \includegraphics[width=0.41\columnwidth,height=5.7cm]{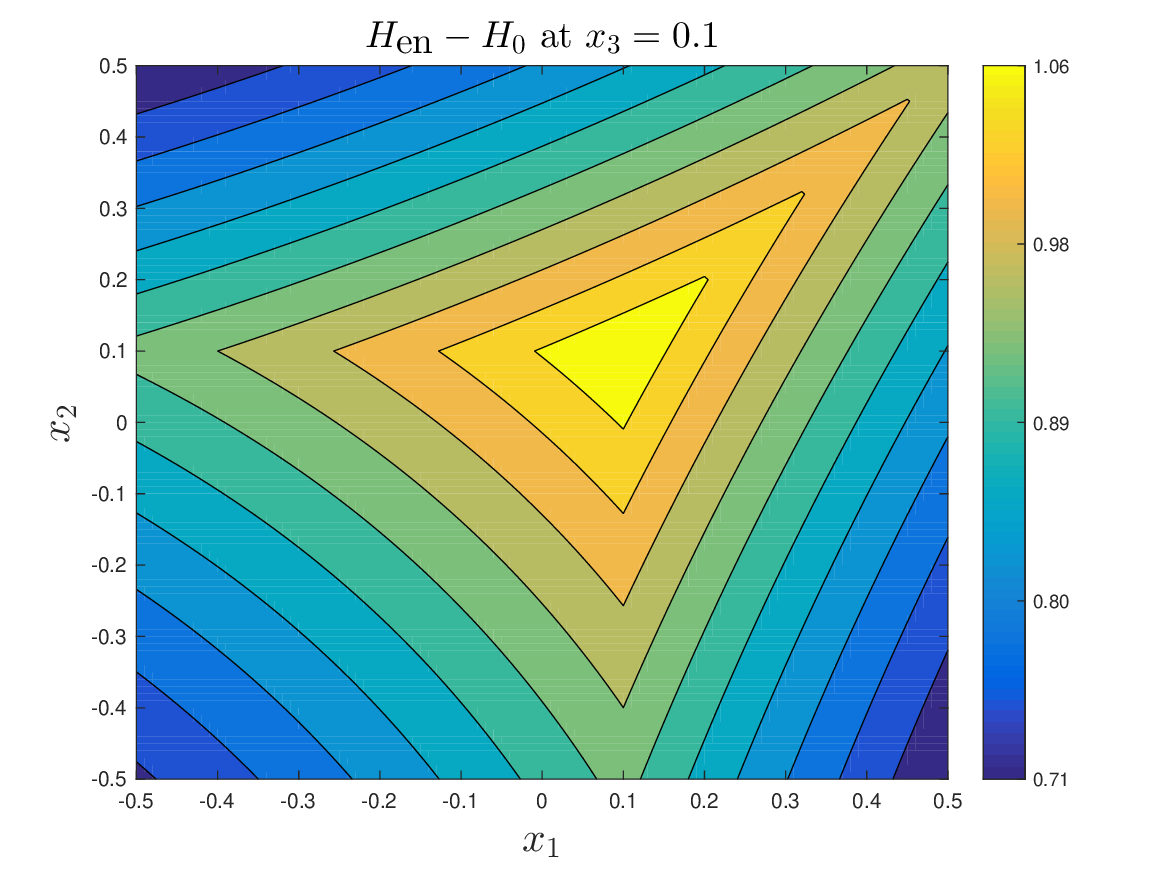}\!
    \includegraphics[width=0.41\columnwidth,height=5.7cm]{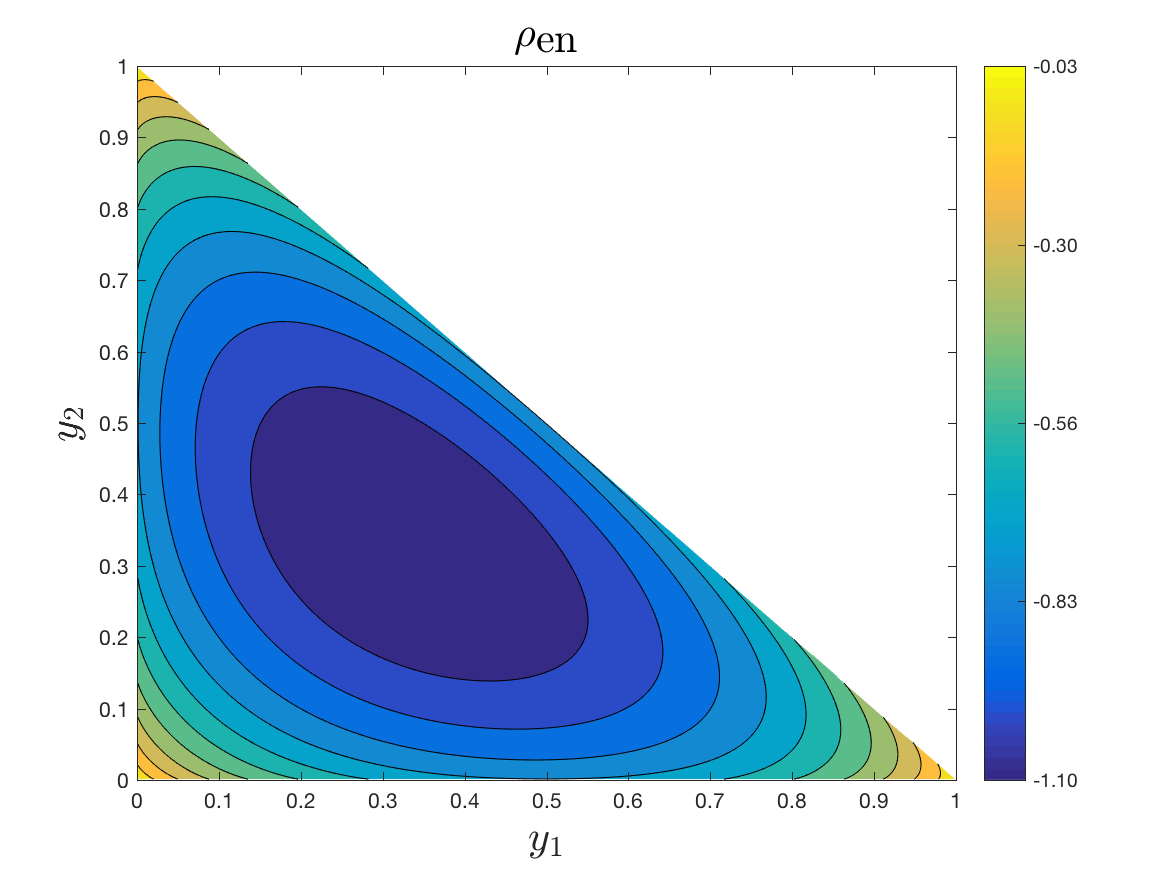}\\
        \includegraphics[width=0.41\columnwidth,height=5.7cm]{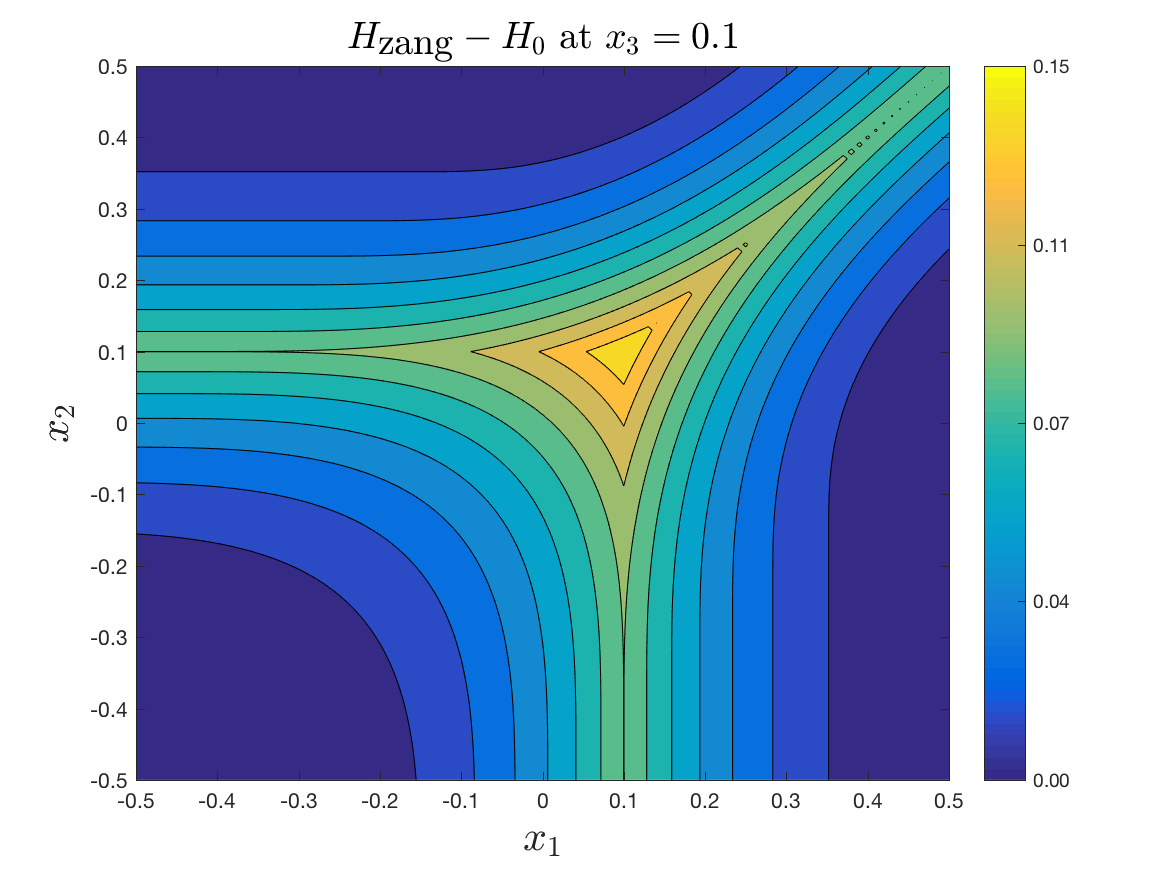}\!
    \includegraphics[width=0.41\columnwidth,height=5.7cm]{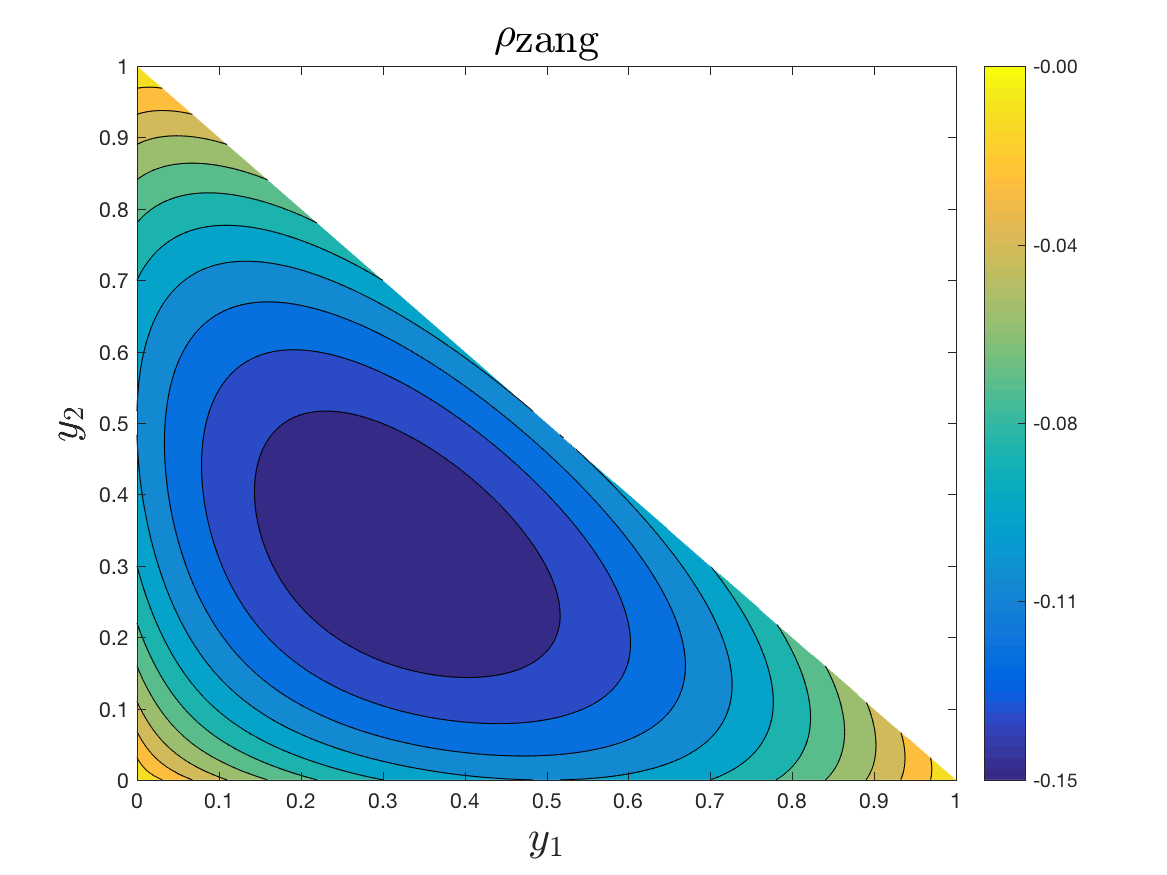}\\
    \caption{Comparison of $H_{\textnormal{en}}$ and $H_{\textnormal{zang}}$ and their corresponding reward functions for $K=3$.}
    \label{fig:comparision_reward}
 \end{figure}

\section{Lipschitz stability of optimal feedback relaxed control}\l{sec:lipschitz}
In this section, we 
shall fix a relaxation parameter $\eps>0$
and  study the robustness of the feedback control strategy \eqref{eq:ctrl_relax} for a relaxed control problem
associated with a perturbed model.
In particular,
we shall show that the control strategy \eqref{eq:ctrl_relax} 
 admits a (locally) Lipschitz continuous dependence 
on the perturbation of the coefficients, if the reward function is  generated by a   function $H$ with locally Lipschitz continuous Hessian.

We start by presenting two technical results, which are essential for our subsequent analysis.
The first one
is due to Nugari \cite{nugari1993},
which establishes the regularity of Nemytskij operators
in H\"{o}lder spaces.

\begin{Lemma}\l{lemma:nemytskij}
Let $n,K\in \N$, $\a\in (0,1]$, $\cO\subset \R^n$ be an  open bounded set, 
 $\phi:\R^K\to \R$ be a continuously differentiable function, 
and $\Phi:u\in C^{\a}(\ol{\cO},\R^K)\mapsto \Phi [u]\in C^{\a}(\ol{\cO})$
be the Nemytskij operator satisfying for all $u=(u_1,\ldots u_K)$ that
$\Phi[u](x)=\phi(u(x))$,  $x\in \ol{\cO}$.
 Then $\Phi$ is well-defined,  continuous and bounded.
 Moreover, 
if we further suppose $\nabla \phi$ is locally Lipschitz  continuous 
(resp.~$\phi$ is twice continuously differentiable),
then $\Phi$ is locally Lipschitz continuous 
(resp.~continuously differentiable with the Fr\'{e}chet derivative $\Phi'[u]=(\nabla\phi)^T(u)$ for all $u\in 
C^{\a}(\ol{\cO},\R^K)$).
\end{Lemma}

\begin{Remark}\l{rmk:Holder_Sobolev}
Lemma \ref{lemma:nemytskij} enables us  to view 
the fully nonlinear HJB operator $F_\eps$ in \eqref{eq:hjb_relax} 
and the value-to-action map $u^\eps\mapsto \lambda^{u^\eps}$ defined  in \eqref{eq:ctrl_relax}
as differentiable maps between suitable H\"{o}lder spaces,
which is  essential for the sensitivity analysis on the value functions and feedback relaxed controls in Section \ref{sec:sensitivity}.

Note that in general it is not possible to 
perform the same first-order sensitivity analysis 
by interpreting the HJB operator $F_\eps$ 
as  a map between the Sobolev space $W^{2,p}(\cO)$ 
and the Lebesgue space $L^q(\cO)$. 
In fact, since the operator $F_\eps:W^{2,p}(\cO)\to L^q(\cO)$ in general is only differentiable with $p>q$ 
(see  \cite[Theorem 13]{smears2014}), 
we see the derivative of $F_\eps$, 
which is a second-order linear elliptic  operator,
is not bijective between $W^{2,p}(\cO)$ and $L^q(\cO)$.
Consequently, we cannot apply 
the implicit function theorem to derive the sensitivity equation for the value function \eqref{eq:value_relax} as in Theorem \ref{thm:sensitivity}. 

If the operator $F_\eps$ is only semilinear, i.e., the diffusion coefficient of \eqref{eq:sde} is  uncontrolled, then one can show that
$F_\eps$ is differentiable between $W^{2,p}(\cO)$ and  $L^p(\cO)$ for $1<p<\infty$,
 and its derivative is a bijection  between the same spaces (see \cite{ito2019} for the case with $p=2$). 
 In this case,  
 we can extend Theorem \ref{thm:sensitivity}
 and study $L^p$-perturbation of the coefficients in \eqref{eq:value_relax}. 
\end{Remark}
\color{black}

Now we   proceed to introduce a relaxed control problem with a set of perturbed coefficients satisfying the following conditions:

\begin{Assumption}\l{assum:D_p}
Let $\nu>0$, $\theta\in (0,1]$ be the constants in (H.\ref{assum:D}),
and  $\Lambda'>0$ be a constant.
The  functions $\hat{b}:\R^n\t\bA\to \R^n$, $\hat{\sigma}:\R^n\t \bA\to \R^{n\t n}$, $\hat{c}:\ol{\cO}\t \bA\to [0,\infty)$,  $\hat{f}:\ol{\cO}\t \bA\to \R$, and $\hat{g}:\ol{\cO}\to \R$
satisfy that  $\hat{g}\in  C^{2,\theta}(\ol{\cO})$,
$\hat{\sigma}(x,\ba_k) \hat{\sigma}^T(x,\ba_k)\ge \nu I_n$ for all $(x,\ba_k)\in \R^n\t \bA$, 
and for all $\ba_k\in \bA$ that
$$
\sum_{i,j} |\hat{\sigma}^{ij}(\cdot,\ba_k)|_{0,1;\R^n}+\sum_{i} |\hat{b}^{i}(\cdot,\ba_k)|_{0,1;\R^n}+
|\hat{c}(\cdot,\ba_k)|_{\theta;\ol{\cO}}+|\hat{f}(\cdot,\ba_k)|_{\theta;\ol{\cO}}\le \Lambda'.
$$

\end{Assumption}

Let $\eps>0$ be a  fixed relaxation parameter. 
We shall consider a perturbed   control problem \eqref{eq:value} with the coefficients 
$(\hat{b},\hat{\sigma},\hat{c},\hat{f},\hat{g})$,
and its relaxation (see \eqref{eq:value_relax}) with parameter $\eps$, 
whose value function is denoted as $\hat{v}^\eps$.
Then, 
by using Lemma \ref{lemma:rho_convex},
Theorems  \ref{thm:wp_relax} and \ref{thm:verification_relax},
 one can verify that, under
(H.\ref{assum:rho}) and 
 (H.\ref{assum:D_p}), the value function $\hat{v}^\eps$ is the classical  solution 
$\hat{u}^\eps\in C(\ol{\cO}) \cap C^2(\cO)$ of the following Dirichlet problem:
\bb\l{eq:hjb_relax_p}
\max_{\lambda\in \Delta_K}
\big(\lambda^T(\hat{\bL} \hat{u}^\eps+\hat{\bf})
-\eps\rho(\lambda)
\big)
=
H_\eps
\big(
\hat{\bL} \hat{u}^\eps+\hat{\bf}
\big)=0
\q\textnormal{in $\cO$}, 
\q\q
\hat{u}^\eps=\hat{g} \q \textnormal{on $\p\cO$,}
\ee
where the function $H_\eps$ is defined as in \eqref{eq:H_eps},
$\hat{\bf}:\ol{\cO}\to \R^K$ is the function satisfying
$\hat{\bf}(x)=(\hat{f}(x,\ba_k))_{k\in \cK}$
for all $x\in \ol{\cO}$, 
and $\hat{\bL}=(\hat{\cL}_k)_{k\in \cK}$ is a family of elliptic operators satisfying for all $k\in \cK$, $\phi\in C^2({\cO})$, $x\in \cO$ that 
$$
\hat{\cL}_k \phi(x)\coloneqq \hat{a}^{ij}_k(x)\p_{ij}\phi(x)+\hat{b}^i_k(x)\p_i\phi(x)-\hat{c}_k(x) \phi(x), \q \textnormal{with $\hat{a}_k=\tfrac{1}{2}\hat{\sigma}_k\hat{\sigma}^T_k$.}
$$
Moreover, 
we can deduce from
 \eqref{eq:ctrl_relax} that,
the optimal feedback  control of the perturbed relaxed control problem is given by
\bb\l{eq:ctrl_relax_p}
\hat{\lambda}^{\hat{u}^\eps}(x)
=\argmax_{\lambda\in \Delta_K}
\big(\lambda^T(\hat{\bL} \hat{u}^\eps(x)+\hat{\bf}(x))
-\eps\rho(\lambda)
\big)
=(\nabla H_\eps)(\hat{\bL} \hat{u}^\eps(x)+\hat{\bf}(x))
\q \fa x\in \ol{\cO}.
\ee
Note that Theorem \ref{thm:wp_relax} shows that the classical solution $\hat{u}^\eps$ of 
\eqref{eq:hjb_relax_p} is  in $C^{2,\b}(\ol{\cO})$ for some $\b>0$, so the above function $\hat{\lambda}^{\hat{u}^\eps}$ is well-defined on $\p\cO$.

The following result 
shows the (local) Lipschitz dependence of 
 $\hat{u}^\eps-u^\eps$
 and $\hat{\lambda}^{\hat{u}^\eps}-{\lambda}^{{u}^\eps}$ 
on
    perturbation of the coefficients,
 which demonstrates the robustness of the relaxed control problem.
For notational simplicity, given the functions 
$({b},{\sigma},{c},{f},{g})$ and $(\hat{b},\hat{\sigma},\hat{c},\hat{f},\hat{g})$
satisfying (H.\ref{assum:D}) and (H.\ref{assum:D_p}) respectively,
we shall introduce for each $\b\in (0,\theta]$  the following measurement of perturbations:
\bb\l{eq:perturbation}
\cE_{\textnormal{per},\b}\coloneqq\sup_{i,j,k} (|a^{ij}_k-\hat{a}^{ij}_k|_{\b;\ol{\cO}}+|b^i_k-\hat{b}^i_k|_{\b;\ol{\cO}}+|c_k-\hat{c}_k|_{\b;\ol{\cO}}+|f_k-\hat{f}_k|_{\b;\ol{\cO}})+|g-\hat{g}|_{2,\b;\ol{\cO}},
\ee
where $a_k=\sigma_k\sigma_k^T$, 
$\hat{a}_k=\hat{\sigma}_k\hat{\sigma}_k^T$ for each $k\in \cK$.

\begin{Theorem}\l{thm:u_hat-u}
Suppose (H.\ref{assum:D}), (H.\ref{assum:rho}) and (H.\ref{assum:D_p}) hold.
Let $\eps>0$, 
$M=\sup_{i,j,k}\max(|\sigma^{ij}_k|_{0;\ol{\cO}},|\hat{\sigma}^{ij}_k|_{0;\ol{\cO}})$,
$M_g=\max(|g|_{2,\theta},|\hat{g}|_{2,\theta})$,
$u^\eps\in C(\ol{\cO}) \cap C^2(\cO)$ 
(resp.~$\hat{u}^\eps\in C(\ol{\cO}) \cap C^2(\cO)$) be the solution to the Dirichlet problem \eqref{eq:hjb_relax} (resp.~\eqref{eq:hjb_relax_p}),
and
$\lambda^{u^\eps}:\ol{\cO}\to \Delta_K$ 
(reps.~$\hat{\lambda}^{\hat{u}^\eps}:\ol{\cO}\to \Delta_K$)
be the function defined as in \eqref{eq:ctrl_relax}
(resp.~\eqref{eq:ctrl_relax_p}).
Then 
there exists $\b_0=\b_0(n,\nu,M)\in (0,1)$,
 such that 
 it holds
for all $\b\in (0,\min(\b_0,\theta)]$ that,
\bb\l{eq:u_hat-u}
| \hat{u}^\eps-u^\eps|_{2,\b}\le 
C\cE_{\textnormal{per},\b}
\ee
with the constant  $\cE_{\textnormal{per},\b}$ defined as in \eqref{eq:perturbation},
and  
a constant 
$C=C(\eps, n, K, \nu, \Lambda, \Lambda', \b, c_0, M_g, \cO)$.


If we further suppose the function $H:\R^K\to \R$ in (H.\ref{assum:rho})
has a locally Lipschitz continuous Hessian, 
then it also holds that 
$|\hat{\lambda}^{\hat{u}^\eps}-\lambda^{u^\eps}|_{\b}\le C\cE_{\textnormal{per},\b}$.

\end{Theorem}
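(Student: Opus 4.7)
The plan is to linearize the difference of the two HJB equations by the same fundamental-theorem-of-calculus device used in the uniqueness argument of Theorem \ref{thm:wp}, apply a global Schauder estimate to the resulting linear elliptic Dirichlet problem, and finally propagate the $C^{2,\b}$ control of $\hat{u}^\eps-u^\eps$ through the Nemytskij map generated by $\nabla H_\eps$ to obtain the $C^\b$ estimate on $\hat{\lambda}^{\hat{u}^\eps}-\lambda^{u^\eps}$.

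First, I would invoke Proposition \ref{prop:Holder_bdd} to secure uniform $C^{2,\b}$ bounds on both $u^\eps$ and $\hat{u}^\eps$, with constants depending only on the allowed quantities (in particular on $M_g$, $\eps c_0$ and $\Lambda,\Lambda'$). Setting $w \coloneqq \hat{u}^\eps - u^\eps$ and exploiting $H_\eps\in C^1(\R^K)$ with $\nabla H_\eps\in \Delta_K$ pointwise (Lemma \ref{lemma:rho_convex}), I write
\[
0=H_\eps(\hat{\bL}\hat{u}^\eps+\hat{\bf})-H_\eps(\bL u^\eps+\bf)=\eta^T\!\left[\bL w+(\hat{\bL}-\bL)\hat{u}^\eps+(\hat{\bf}-\bf)\right],
\]
where $\eta(x)\coloneqq\int_0^1 \nabla H_\eps\bigl(s(\hat{\bL}\hat{u}^\eps(x)+\hat{\bf}(x))+(1-s)(\bL u^\eps(x)+\bf(x))\bigr)\,ds\in\Delta_K$. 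This recasts the difference equation as a scalar, uniformly elliptic linear Dirichlet problem $\tilde{L}w=S$ in $\cO$, $w=\hat g - g$ on $\p\cO$, with $\tilde L\coloneqq \eta^T\bL$ having nonnegative zero-order coefficient $\sum_k\eta_k c_k\ge 0$ and source $S\coloneqq -\eta^T\!\left[(\hat{\bL}-\bL)\hat{u}^\eps+(\hat{\bf}-\bf)\right]$.

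Next I would verify the $C^\b$ regularity of the coefficients of $\tilde L$ and of $S$, with norms controlled independently of the perturbation. The coefficients of $\bL$, together with $\bf$, $\hat{\bf}$, $\hat{\bL}\hat u^\eps$, lie in $C^\b(\ol{\cO})$ by (H.\ref{assum:D}), (H.\ref{assum:D_p}) and the uniform $C^{2,\b}$ bounds; since $\nabla H_\eps$ is continuously differentiable (hence locally Lipschitz), Lemma \ref{lemma:nemytskij} applied to the $C^\b$-bounded argument $s(\hat{\bL}\hat u^\eps+\hat{\bf})+(1-s)(\bL u^\eps+\bf)$ yields $\eta\in C^\b(\ol{\cO},\R^K)$ with a bound depending only on the \textit{a priori} constants. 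Expanding termwise then gives $|S|_\b + |\hat g - g|_{2,\b}\le C\,\cE_{\textnormal{per},\b}$, and the global Schauder estimate for the Dirichlet problem on $\p\cO\in C^{2,\b}$ produces
\[
|w|_{2,\b}\le C\bigl(|S|_\b+|\hat g-g|_{2,\b}\bigr)\le C\,\cE_{\textnormal{per},\b},
\]
which is \eqref{eq:u_hat-u}.

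For the second assertion, I use that under the extra hypothesis the Jacobian of $\nabla H_\eps$ is locally Lipschitz, so the corresponding Nemytskij operator is locally Lipschitz on $C^\b(\ol{\cO},\R^K)$ by the final part of Lemma \ref{lemma:nemytskij}. Applying this to the representations \eqref{eq:ctrl_relax} and \eqref{eq:ctrl_relax_p} and using the triangle inequality,
\[
|\hat{\lambda}^{\hat u^\eps}-\lambda^{u^\eps}|_\b \le C\bigl|\hat{\bL}\hat u^\eps+\hat{\bf}-\bL u^\eps-\bf\bigr|_\b \le C\bigl(|w|_{2,\b}+\cE_{\textnormal{per},\b}\bigr)\le C\,\cE_{\textnormal{per},\b}
\]
by the first part of the theorem. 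The main obstacle is the first step: guaranteeing that the $C^\b$-norms of $\eta$ and of the coefficients of $\tilde L$ are controlled by the \textit{a priori} data only, so that the Schauder constant in the linear estimate does not depend on the perturbation. This amounts to an honest bookkeeping exercise combining the uniform Proposition \ref{prop:Holder_bdd} bound with Lemma \ref{lemma:nemytskij}, while carefully tracking the dependence of all constants on $\eps, n, K, \nu, \Lambda, \Lambda', \b, c_0, M_g$ and $\cO$.
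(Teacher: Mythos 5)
Your proposal follows essentially the same route as the paper: (i) \textit{a priori} $C^{2,\b}$ bounds on both solutions via Proposition~\ref{prop:Holder_bdd}; (ii) the fundamental-theorem-of-calculus linearization $0=\eta^T[\bL w+(\hat\bL-\bL)\hat u^\eps+(\hat\bf-\bf)]$ with $\eta\in C^\b(\ol{\cO},\Delta_K)$ controlled through Lemma~\ref{lemma:nemytskij}; (iii) global Schauder for the resulting scalar uniformly elliptic Dirichlet problem; and (iv) propagation to the control difference via the locally Lipschitz Nemytskij operator $\nabla H_\eps$. One small point worth making explicit: the global Schauder estimate (Theorem~6.6 in Gilbarg--Trudinger, as the paper cites) gives $|w|_{2,\b}\le C(|w|_0+|S|_\b+|\hat g-g|_{2,\b})$, so you must first bound $|w|_0$ by $C(|S|_0+|\hat g-g|_0)$ using the classical maximum principle --- this is precisely what your observation $\sum_k\eta_k c_k\ge0$ is for, but it should be invoked rather than left implicit. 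With that step filled in, the argument matches the paper's proof.
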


\begin{proof}
Throughout this proof, 
we shall denote by $C$ a generic constant, 
which  
depends only  on $\eps$, $n$, $K$, $\nu$, $\Lambda$,  $\Lambda'$, $\b$, $c_0$, $M_g$ and  $\cO$,
and may take a different value at each occurrence.

The \textit{a priori} estimate in Proposition \ref{prop:Holder_bdd} shows that 
there exists a constant $\b_0=\b_0(n,\nu,M)\in (0,1)$,
 such that 
we have  for all $\b\in (0,\min(\b_0,\theta)]$ the estimates
$ |u^\eps|_{2,\b},|\hat{u}^\eps|_{2,\b}\le C$.
Moreover, we have by  the fundamental theorem of calculus that 
\begin{align}\l{eq:u-u_hat}
\begin{split}
0&=
H_\eps\big(
{\bL} {u}^\eps+{\bf}
\big)
-H_\eps\big(
\hat{\bL} \hat{u}^\eps+\hat{\bf}
\big)={\eta}^T\big({\bL} {u}^\eps+{\bf}-\hat{\bL} \hat{u}^\eps-\hat{\bf}\big)
={\eta}^T\big({\bL} ({u}^\eps-\hat{u}^\eps)+({\bL} -\hat{\bL}) \hat{u}^\eps+{\bf}-\hat{\bf}\big)
\end{split}
\end{align}
in $\cO$, where ${\eta}:\ol{\cO}\to \Delta_K$ is the function defined as 
${\eta} \coloneqq \int_0^1 (\nabla H_{\eps})\big(s({\bL} {u}^\eps+{\bf})+(1-s)(\hat{\bL} \hat{u}^\eps+\hat{\bf})\big)\,ds$.

Now let  $\b\in (0,\min(\b_0,\theta)]$ be a fixed constant. 
The fact that  $\nabla H_\eps\in C^1(\R^K,\Delta_K)$ (see  (H.\ref{assum:rho})), the H\"{o}lder continuity of  coefficients 
(see (H.\ref{assum:D}) and (H.\ref{assum:D_p})), and the   \textit{a priori} estimates of $ |u^\eps|_{2,\b}$ and $|\hat{u}^\eps|_{2,\b}$
yield  the estimate that
  $|\eta|_{\b}\le C$ (see Lemma \ref{lemma:nemytskij}).
Then, by setting $w={u}^\eps-\hat{u}^\eps\in C^{2,\b}(\ol{\cO})$, we can deduce from \eqref{eq:u-u_hat} that 
$w$ is the classical solution to the following Dirichlet problem:
\begin{align*}
{\eta}^T{\bL} w=-{\eta}^T\big(({\bL} -\hat{\bL}) \hat{u}^\eps+{\bf}-\hat{\bf}\big)
\q\textnormal{in $\cO$}, 
\qq
w=g-\hat{g} \q \textnormal{on $\p\cO$.}
\end{align*}
Hence the fact that $\eta\in C^\b(\ol{\cO},\Delta_K)$
and  the  global Schauder estimate in \cite[Theorem 6.6]{gilbarg1985} 
lead us to the estimate that
\begin{align*}
|w|_{2,\b}
&\le C(|w|_{0}+|g-\hat{g}|_{2,\b}+|{\eta}^T\big(({\bL} -\hat{\bL}) \hat{u}^\eps+{\bf}-\hat{\bf}\big)|_\b),
\end{align*}
which, together with the maximum principle (see \cite[Theorem 3.7]{gilbarg1985}) and 
the    \textit{a priori} estimate of  $|\hat{u}^\eps|_{2,\b}$, enables us to conclude that:
\begin{align*}
|{u}^\eps-\hat{u}^\eps|_{2,\b}=|w|_{2,\b} &\le C(|g-\hat{g}|_{2,\b}+|{\eta}^T\big(({\bL} -\hat{\bL}) \hat{u}^\eps+{\bf}-\hat{\bf}\big)|_\b)
\le C\cE_{\textnormal{per},\b},
\end{align*}
with the constant $\cE_{\textnormal{per},\b}$ defined as in \eqref{eq:perturbation}.

Now we show the stability of feedback controls. Note that  \eqref{eq:u_hat-u} implies that 
\begin{align*}
&|\big(
{\bL} {u}^\eps+{\bf}
\big)
-\big(
\hat{\bL} \hat{u}^\eps+\hat{\bf}
\big)|_\b
=
|{\bL} ({u}^\eps-\hat{u}^\eps)+({\bL} -\hat{\bL}) \hat{u}^\eps+{\bf}-\hat{\bf}|_\b
\le C\cE_{\textnormal{per},\b}.
\end{align*}
The additional assumption that 
$H:\R^K\to \R$ in (H.\ref{assum:rho})
has a locally Lipschitz continuous Hessian implies that 
$\nabla H_\eps$ is differentiable with locally Lipschitz continuous derivatives,
which along with Lemma \ref{lemma:nemytskij} shows that 
the Nemytskij operator $\nabla H_\eps:C^\b(\ol{\cO},\R^K)\to C^\b(\ol{\cO},\R^K) $ is locally Lipschitz continuous.
Hence there exists a constant $C$, such that for all
perturbed coefficients 
$(\hat{b},\hat{\sigma},\hat{c},\hat{f},\hat{g})$ 
satisfying  (H.\ref{assum:D_p}), we have 
\begin{align*}
&|\hat{\lambda}^{\hat{u}^\eps}-\lambda^{u^\eps}|_{\b}
=|(\nabla H_\eps)(\hat{\bL} \hat{u}^\eps(x)+\hat{\bf}(x))-
(\nabla H_\eps)({\bL} {u}^\eps(x)+{\bf}(x))|\\
&\le 
C|\big(
\hat{\bL} \hat{u}^\eps+\hat{\bf}
\big)
-\big(
{\bL} {u}^\eps+{\bf}
\big)|_\b
\le C\cE_{\textnormal{per},\b},
\end{align*}
which finishes the desired (local) Lipschitz estimate.
\end{proof}

\begin{Remark}\l{rmk:unstable_eps0}
The assumption that 
 $H:\R^K\to \R$ in (H.\ref{assum:rho})
has a locally Lipschitz continuous Hessian is satisfied by 
most commonly used functions, including  
$H_{\textnormal{en}}$, $H_{\textnormal{chks}}$ and $H_{\textnormal{zang}}$
given in Section \ref{sec:relax}.
In general, if $H$ is merely twice continuously differentiable as in (H.\ref{assum:rho}), we can follow a similar argument and establish that 
the H\"{o}lder norm of the difference between two relaxed control strategies is continuously dependent on 
the H\"{o}lder norms of
the perturbations in the coefficients.

Note that
the Lipschitz stability
result \eqref{eq:u_hat-u} 
in general does not hold for 
the original control problem \eqref{eq:value} 
(or equivalently, $\eps=0$ in \eqref{eq:value_relax}).
In fact, for any given $\b\in (0,1)$,  \cite[Theorem 2]{drabek1975} shows that 
the Nemytskij operator $\bf\in (C^{\b}(\ol{\cO}))^K\mapsto H_0(\bf)\in C^{\b}(\ol{\cO})$
is not continuous, which implies that there exists $(\bf_m)_{m\in \N\cup\{\infty\}}\subset (C^{\b}(\ol{\cO}))^K$ such that
$\lim_{m\to\infty}
|\bf_m-\bf_\infty|_{\b}=0$ and $|H_0(\bf_m)-H_0(\bf_\infty)|_{\b}\ge 1$ for all $m\in\N$.
Now for each $m\in \N\cup\{\infty\}$, we consider  
the following simple HJB equation  \eqref{eq:hjb}:
$\Delta u_m+H_0(\bf_m)=0$ {in $\cO$} and  $u_m=0$ {on $\p\cO$}.
Hence we have $|\Delta (u_m-u_\infty)|_\b=|H_0(\bf_m)-H_0(\bf_\infty)|_\b\ge 1$ for all $m\in \N$,
which implies that the  $C^{2,\b}$-norm of the
value function   \eqref{eq:value} does not 
depend continuously on the $C^{\b}$-perturbation of the model parameters.
 See Theorem \ref{thm:deltau_eps}
  for a precise quantification of $\eps$-dependence in \eqref{eq:u_hat-u}.
\color{black}
\end{Remark}

The remaining part of this section is devoted to an 
important application of Theorem \ref{thm:u_hat-u},
where we shall examine 
the performance of the control strategy $\lambda^{u^\eps}$,
computed based on 
the relaxed control problem with the original coefficients $({b},{\sigma},{c},{f},{g})$  (see \eqref{eq:ctrl_relax}),
on a new relaxed control problem with  perturbed coefficients satisfying (H.\ref{assum:D_p}).

We first observe that,
if there exists a classical solution $\ul{u}^\eps\in C(\ol{\cO}) \cap C^2(\cO)$ to the following  problem: 
\bb\l{eq:hjb_p_sub}
(\lambda^{u^\eps})^T(\hat{\bL} \ul{u}^\eps+\hat{\bf})-\eps\rho(\lambda^{u^\eps})=0
\q\textnormal{in $\cO$}, 
\qq
\ul{u}^\eps=\hat{g} \q \textnormal{on $\p\cO$,}
\ee
with  $\hat{\bL}$ and $\hat{\bf}$  defined as in \eqref{eq:hjb_relax_p},
then by 
 using It\^{o}'s formula, one can easily show that 
the  reward function $\ul{v}^\eps$, resulting by implementing the H\"{o}lder continous feedback control $\lam^{u^\eps}$ to  
the relaxed control problem with the coefficients $(\hat{b},\hat{\sigma},\hat{c},\hat{f},\hat{g})$,
coincides with the function $\ul{u}^\eps$ (see e.g.~Theorems \ref{thm:verification} and \ref{thm:verification_relax}).
On the other hand, we have seen that 
the (optimal) value function $\hat{v}^\eps$ of the perturbed relaxed control problem 
is  the classical solution $\hat{u}^{\eps}$ to \eqref{eq:hjb_relax_p}.
Hence it suffices to compare the classical solutions to \eqref{eq:hjb_p_sub} and \eqref{eq:hjb_relax_p}.

The following proposition shows that \eqref{eq:hjb_p_sub} indeed admits an unique classical solution.
\begin{Proposition}
Suppose (H.\ref{assum:D}), (H.\ref{assum:rho}) and (H.\ref{assum:D_p}) hold.
Let $\eps>0$,  $M=\sup_{i,j,k}|\sigma^{ij}_k|_{0;\ol{\cO}}$,
$\lambda^{u^\eps}:\ol{\cO}\to \Delta_K$ 
be the function defined as in \eqref{eq:ctrl_relax},
and 
$\b_0=\b_0(n,\nu,M)\in (0,1)$ be the  constant  in Proposition \ref{prop:Holder_bdd}.
Then
the Dirichlet problem \eqref{eq:hjb_p_sub} admits a unique solution 
$\ul{u}^\eps\in  C^{2,\min(\b_0,\theta)}(\ol{\cO})$.

\end{Proposition}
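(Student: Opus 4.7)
The plan is to recognize that since $u^\eps$ (and therefore $\lambda^{u^\eps}$) is already determined by Theorem \ref{thm:wp_relax}, the Dirichlet problem \eqref{eq:hjb_p_sub} is in fact a \emph{linear} elliptic boundary value problem, and to verify that all the data satisfy the hypotheses of classical Schauder theory so that existence, uniqueness and $C^{2,\min(\b_0,\theta)}$-regularity follow immediately.

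First I would rewrite the equation as $\tilde{\cL}\ul{u}^\eps = \hat{F}$ in $\cO$, $\ul{u}^\eps = \hat{g}$ on $\p\cO$, where
\[
\tilde{\cL}\phi := (\lambda^{u^\eps})^T\hat{\bL}\phi = \hat{A}^{ij}\p_{ij}\phi + \hat{B}^i\p_i\phi - \hat{C}\phi,
\]
with $\hat{A}^{ij} = \sum_k \lambda^{u^\eps,k}\hat{a}_k^{ij}$, $\hat{B}^i = \sum_k \lambda^{u^\eps,k}\hat{b}_k^i$, $\hat{C} = \sum_k \lambda^{u^\eps,k}\hat{c}_k$, and $\hat{F} := \eps\rho(\lambda^{u^\eps}) - (\lambda^{u^\eps})^T\hat{\bf}$. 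I then verify that $\tilde{\cL}$ is uniformly elliptic with H\"older continuous coefficients: since $\lambda^{u^\eps,k}(x)\ge 0$, $\sum_k\lambda^{u^\eps,k}(x)=1$ and each $\hat{a}_k\ge (\nu/2)I_n$ by (H.\ref{assum:D_p}), we have $\hat{A}\ge (\nu/2)I_n$; moreover $\hat{C}\ge 0$ since $\hat{c}_k\ge 0$; and combining the regularity $\lambda^{u^\eps}\in C^{\min(\b_0,\theta)}(\ol{\cO},\R^K)$ from Theorem \ref{thm:wp_relax} with (H.\ref{assum:D_p}) and the product rule for H\"older functions yields $\hat{A}^{ij},\hat{B}^i,\hat{C}\in C^{\min(\b_0,\theta)}(\ol{\cO})$.

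The delicate point is the H\"older regularity of $\hat{F}$, because under (H.\ref{assum:rho}) the function $\rho$ is only known to be continuous on $\Delta_K$ and equals $+\infty$ on $(\Delta_K)^c$, so composing it directly with $\lambda^{u^\eps}$ is problematic when the latter approaches $\p\Delta_K$. To circumvent this I will invoke the Fenchel--Young identity implicit in Lemma \ref{lemma:rho_convex}\eqref{item:H_eps}: for any $y\in\R^K$, if $\lambda=\nabla H_\eps(y)$ then $\eps\rho(\lambda)=y^T\lambda-H_\eps(y)$. Applying this pointwise with $y(x)=\bL u^\eps(x)+\bf(x)$, using \eqref{eq:ctrl_relax}, and invoking $H_\eps(\bL u^\eps+\bf)=0$ on $\ol{\cO}$ (from \eqref{eq:hjb_relax} and continuity of $u^\eps\in C^{2,\min(\b_0,\theta)}(\ol{\cO})$), gives the pointwise identity
\[
\eps\rho(\lambda^{u^\eps}(x)) = (\lambda^{u^\eps})^T(\bL u^\eps+\bf)(x), \qquad x\in\ol{\cO},
\]
whose right-hand side manifestly lies in $C^{\min(\b_0,\theta)}(\ol{\cO})$. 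Hence $\hat{F}\in C^{\min(\b_0,\theta)}(\ol{\cO})$.

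With uniform ellipticity, H\"older continuity of coefficients and source, $\hat{C}\ge 0$, boundary data $\hat{g}\in C^{2,\theta}(\ol{\cO})\subset C^{2,\min(\b_0,\theta)}(\ol{\cO})$ and $\p\cO\in C^{2,\theta}$, the existence of a unique solution $\ul{u}^\eps\in C^{2,\min(\b_0,\theta)}(\ol{\cO})$ follows from standard linear Schauder theory (e.g.~\cite[Theorem 6.14]{gilbarg1985}), and uniqueness within the broader class $C(\ol{\cO})\cap C^2(\cO)$ follows from the classical weak maximum principle applied to $\tilde{\cL}$ since its zeroth-order coefficient $-\hat{C}$ is nonpositive. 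I expect the only nontrivial obstacle to be justifying the H\"older regularity of $\eps\rho(\lambda^{u^\eps})$; bypassing it via the Fenchel--Young identity is the crucial step, after which the result is a straightforward appeal to linear elliptic theory.
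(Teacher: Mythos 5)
Your proposal is correct and follows essentially the same route as the paper's proof: both rely on the Fenchel--Young identity $(\eps\rho)(\nabla H_\eps(y))=y^T\nabla H_\eps(y)-H_\eps(y)$ (from \cite[Theorem 23.5]{rockafellar1970}) to establish H\"older regularity of $\eps\rho(\lambda^{u^\eps})$, uniqueness by the maximum principle, and existence/regularity via \cite[Theorem 6.14]{gilbarg1985}. The only difference is cosmetic: the paper argues at the level of the scalar function $(\eps\rho)\circ\nabla H_\eps\in C^1(\R^K)$ and then composes with the $C^{\bar\b_0}$ map $\bL u^\eps+\bf$, whereas you substitute $y(x)=\bL u^\eps(x)+\bf(x)$ directly and exploit the PDE $H_\eps(\bL u^\eps+\bf)=0$ to write $\eps\rho(\lambda^{u^\eps})=(\lambda^{u^\eps})^T(\bL u^\eps+\bf)$ as an explicit product of H\"older functions, which is a marginally shorter path to the same conclusion.
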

\begin{proof}
Let us denote $\bar{\b}_0=\min(\b_0,\theta)$ throughout this proof.
The uniqueness of classical solutions to \eqref{eq:hjb_p_sub} follows directly from the 
classical maximum principle (see \cite[Theorem 3.7]{gilbarg1985}).
Hence we shall focus on establishing the existence and regularity of solutions to \eqref{eq:hjb_p_sub}.
Note that,
Theorem \ref{thm:wp_relax}  shows that
 $\lambda^{u^\eps}\in  C^{\bar{\b}_0}(\ol{\cO},\Delta_K)$ 
and $u^\eps\in C^{2,\bar{\b}_0}(\ol{\cO})$,
where $u^\eps$ is the classical solution to \eqref{eq:hjb_relax}.

We now study the  function
$\rho(\nabla H_\eps):x\in \R^K\mapsto \rho(\nabla  H_\eps(x)) \in\R$.
Note that,  (H.\ref{assum:rho}) and \eqref{eq:H_eps}  imply that 
$H_\eps:\R^K\to \R$ is convex and differentiable.   Moreover, Lemma \ref{lemma:rho_convex}\eqref{item:H_eps} shows that the  convex conjugate of $H_\eps$, denoted by $(H_\eps)^*$, is given by
$(H_\eps)^*(y)=\sup_{x\in \R^K}(x^Ty-H_\eps(x))=\eps\rho(y)$, for all $y\in \R^K$.
Hence, we can deduce from \cite[Theorem 23.5]{rockafellar1970} (by setting $f=H_\eps$ in the statement) that
\bb\l{eq:rho_nablaH}
(\eps \rho)((\nabla H_\eps)(x))=(H_\eps)^*((\nabla H_\eps)(x))=x^T (\nabla H_\eps)(x)-H_\eps(x), \q x\in \R^K,
\ee
which implies that $(\eps\rho)(\nabla H_\eps)\in C^1(\R^K)$.
Then, by using the representation $\lambda^{u^\eps}=(\nabla H_\eps)(\bL u^\eps+\bf)$, we can conclude that $\eps\rho(\lambda^{u^\eps})=(\eps\rho)\big((\nabla H_\eps)(\bL u^\eps+\bf)\big)\in C^{\bar{\b}_0}(\ol{\cO})$.

Therefore, 
we can deduce from the fact that all coefficients of \eqref{eq:hjb_p_sub} are in $C^{\bar{\b}_0}(\ol{\cO})$,
$\sum_{k=1}^K \lambda^{u^\eps,k}c_k\ge 0$,
and \cite[Theorem 6.14]{gilbarg1985} that 
\eqref{eq:hjb_p_sub} admits a unique solution in $C^{2,\bar{\b}_0}(\ol{\cO})$.
\end{proof}

We are ready to show that, 
the difference between 
this suboptimal reward function $\ul{v}^\eps$
and the (optimal) value function $\hat{v}^\eps$ of the perturbed relaxed control problem 
depends Lipschitz-continuously on 
the magnitude of perturbations in the coefficients.

\begin{Theorem}\l{thm:u_sub-u_op}
Suppose (H.\ref{assum:D}), (H.\ref{assum:rho}) and (H.\ref{assum:D_p}) hold.
Let $\eps>0$, 
$M=\sup_{i,j,k}\max(|\sigma^{ij}_k|_{0;\ol{\cO}},|\hat{\sigma}^{ij}_k|_{0;\ol{\cO}})$,
$M_g=\max(|g|_{2,\theta},|\hat{g}|_{2,\theta})$,
and
$\ul{u}^\eps\in C(\ol{\cO}) \cap C^2(\cO)$ 
(resp.~$\hat{u}^\eps\in C(\ol{\cO}) \cap C^2(\cO)$) be the solution to the Dirichlet problem \eqref{eq:hjb_p_sub} (resp.~\eqref{eq:hjb_relax_p}).
Then 
we have $\hat{u}^\eps\ge \ul{u}^\eps$ on $\ol{\cO}$.

If we further suppose the function $H:\R^K\to \R$ in (H.\ref{assum:rho})
has a locally Lipschitz continuous Hessian,
then there exists $\b_0=\b_0(n,\nu,M)\in (0,1)$,
 such that 
 for all $\b\in (0,\min(\b_0,\theta)]$,
 we have the estimate
 $
| \hat{u}^\eps-\ul{u}^\eps|_{2,\b}
\le 
C\cE_{\textnormal{per},\b}$,
with the constant $\cE_{\textnormal{per},\b}$  defined as in \eqref{eq:perturbation},
and  
a constant $C=C(\eps, n, K, \nu, \Lambda, \Lambda', \b, c_0, M_g, \cO)$.
\end{Theorem}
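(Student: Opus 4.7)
My plan is to reduce both assertions to analysis of a single linear elliptic boundary value problem for $w \coloneqq \hat{u}^\eps - \ul{u}^\eps$. Let $\tilde{L}\phi \coloneqq (\lambda^{u^\eps})^T \hat{\bL}\phi$; under (H.\ref{assum:D_p}) this operator is uniformly elliptic on $\cO$ with nonnegative zero-order coefficient $\sum_{k} \lambda^{u^\eps,k}\hat{c}_k$, and since $\lambda^{u^\eps} \in C^{\min(\b_0,\theta)}(\ol{\cO},\Delta_K)$ by Theorem \ref{thm:wp_relax}, its coefficients lie in $C^\b(\ol{\cO})$ for every $\b \in (0,\min(\b_0,\theta)]$. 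Because $\hat{u}^\eps = \ul{u}^\eps = \hat{g}$ on $\p\cO$, the function $w$ belongs to $C^{2,\b}(\ol{\cO})$ and satisfies $w\vert_{\p\cO}=0$.

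For the monotonicity $\hat{u}^\eps \ge \ul{u}^\eps$, I compute $\tilde{L}\hat{u}^\eps$ and $\tilde{L}\ul{u}^\eps$ separately. Equation \eqref{eq:hjb_p_sub} gives $\tilde{L}\ul{u}^\eps = \eps\rho(\lambda^{u^\eps}) - (\lambda^{u^\eps})^T\hat{\bf}$. Since $\lambda^{u^\eps}(x) \in \Delta_K$, the sup-representation in Lemma \ref{lemma:rho_convex}\eqref{item:H_eps} together with the HJB equation $H_\eps(\hat{\bL}\hat{u}^\eps + \hat{\bf}) = 0$ yields $\tilde{L}\hat{u}^\eps \le \eps\rho(\lambda^{u^\eps}) - (\lambda^{u^\eps})^T\hat{\bf}$. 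Subtracting gives $\tilde{L}w \le 0$ in $\cO$, and the weak maximum principle \cite[Theorem 3.7]{gilbarg1985} applied to $-w$ delivers $w \ge 0$.

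For the quantitative estimate the key is to promote this inequality to the exact identity
\bb
\tilde{L}w = -B \q\textnormal{in $\cO$}, \qquad B \coloneqq H_\eps(\zeta) - H_\eps(\xi) - \nabla H_\eps(\xi)^T(\zeta - \xi),
\ee
where $\xi \coloneqq \bL u^\eps + \bf$ and $\zeta \coloneqq \hat{\bL}\hat{u}^\eps + \hat{\bf}$; this follows from $\lambda^{u^\eps} = \nabla H_\eps(\xi)$, the Legendre relation $\eps\rho(\nabla H_\eps(\xi)) = \nabla H_\eps(\xi)^T\xi - H_\eps(\xi)$ from \eqref{eq:rho_nablaH}, and $H_\eps(\zeta) = 0$. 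The function $B$ is exactly the Bregman divergence of the convex function $H_\eps$ between $\xi$ and $\zeta$. Under the locally Lipschitz Hessian hypothesis on $H$, a second-order Taylor expansion of $H_\eps$ combined with Lemma \ref{lemma:nemytskij} applied to the Nemytskij operator $u \mapsto \nabla^2 H_\eps(u)$ on $C^\b(\ol{\cO},\R^K)$ will give $|B|_\b \le C|\zeta - \xi|_\b^2$. Decomposing $\zeta - \xi = \hat{\bL}(\hat{u}^\eps - u^\eps) + (\hat{\bL} - \bL)u^\eps + (\hat{\bf} - \bf)$, I invoke Theorem \ref{thm:u_hat-u} for the first term and the uniform $C^{2,\b}$-bound on $u^\eps$ from Proposition \ref{prop:Holder_bdd} for the second to obtain $|\zeta - \xi|_\b \le C\cE_{\textnormal{per},\b}$, hence $|B|_\b \le C\cE_{\textnormal{per},\b}^2 \le C\cE_{\textnormal{per},\b}$ after using the a priori boundedness of $\cE_{\textnormal{per},\b}$ under (H.\ref{assum:D}) and (H.\ref{assum:D_p}). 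A final appeal to the global Schauder estimate \cite[Theorem 6.6]{gilbarg1985} for $\tilde{L}w = -B$ with vanishing Dirichlet data, together with the maximum-principle bound $|w|_0 \le C|B|_0$, then produces $|w|_{2,\b} \le C(|w|_0 + |B|_\b) \le C\cE_{\textnormal{per},\b}$.

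The main obstacle is upgrading the pointwise quadratic bound $|B(x)| \le C|\zeta(x) - \xi(x)|^2$ to its Hölder-norm analogue $|B|_\b \le C|\zeta - \xi|_\b^2$: this is precisely where the locally Lipschitz Hessian assumption on $H$ enters, and the step must be channeled through the Nemytskij calculus of Lemma \ref{lemma:nemytskij}, exploiting that $C^\b(\ol{\cO})$ is a Banach algebra. Everything else, namely the Schauder estimate, the maximum principle, and the propagation of the Hölder estimate on $\zeta - \xi$ via Theorem \ref{thm:u_hat-u} and Proposition \ref{prop:Holder_bdd}, is essentially routine and closely parallels the structure of the proof of Theorem \ref{thm:u_hat-u}.
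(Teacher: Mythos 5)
Your proof is correct, but it takes a genuinely different route from the paper. The paper, after establishing the monotonicity in the same way you do, subtracts the optimality equation $(\hat{\lambda}^{\hat{u}^\eps})^T(\hat{\bL}\hat{u}^\eps+\hat{\bf})-\eps\rho(\hat{\lambda}^{\hat{u}^\eps})=0$ from \eqref{eq:hjb_p_sub}, which produces a source term involving \emph{both} feedback controls, $(\hat{\lambda}^{\hat{u}^\eps}-\lambda^{u^\eps})^T(\hat{\bL}\hat{u}^\eps+\hat{\bf})-\big(\eps\rho(\hat{\lambda}^{\hat{u}^\eps})-\eps\rho(\lambda^{u^\eps})\big)$; it then invokes the control-stability estimate $|\hat{\lambda}^{\hat{u}^\eps}-\lambda^{u^\eps}|_\b\le C\cE_{\textnormal{per},\b}$ from Theorem \ref{thm:u_hat-u} and the local Lipschitz continuity of the Nemytskij operator $(\eps\rho)(\nabla H_\eps)$ (established via \eqref{eq:rho_nablaH}) for the second term. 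You instead stay entirely with the \emph{un}perturbed control $\lambda^{u^\eps}=\nabla H_\eps(\xi)$ and use the Legendre relation and $H_\eps(\zeta)=0$ to recognize that the source term is exactly the Bregman divergence $B=H_\eps(\zeta)-H_\eps(\xi)-\nabla H_\eps(\xi)^T(\zeta-\xi)$. This is algebraically cleaner, avoids Theorem \ref{thm:u_hat-u}'s control-stability estimate altogether (you only need its value-function half), and yields the sharper bound $|B|_\b\lesssim|\zeta-\xi|_\b^2\lesssim\cE_{\textnormal{per},\b}^2$, which is quadratic in the perturbation and a genuine improvement when $\cE_{\textnormal{per},\b}$ is small.

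One imprecision: Lemma \ref{lemma:nemytskij} as stated requires the generating function to be $C^1$ to conclude that the Nemytskij operator is well-defined and bounded; you apply it to $\nabla^2H_\eps$, which under the hypothesis is only locally Lipschitz, not continuously differentiable. This does not break your argument, because the underlying fact you actually need — that composition with a locally Lipschitz $\psi:\R^K\to\R$ maps $C^\b(\ol{\cO},\R^K)$ into $C^\b(\ol{\cO})$ and is bounded on bounded sets, with $[\psi(u)]_\b\le\mathrm{Lip}(\psi)[u]_\b$ on the range of $u$ — is elementary and weaker than the lemma's conclusion. You should simply state that estimate directly (together with the Banach-algebra property of $C^\b$) rather than citing the lemma, since the lemma's hypothesis is not satisfied.
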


\begin{proof}
Let $\lambda^{u^\eps}:\ol{\cO}\to \Delta_K$ 
(reps.~$\hat{\lambda}^{\hat{u}^\eps}:\ol{\cO}\to \Delta_K$)
be the function defined as in \eqref{eq:ctrl_relax}
(resp.~\eqref{eq:ctrl_relax_p}),
and $C$ be a generic constant, 
which  
depends only  on $\eps$, $n$, $K$, $\nu$, $\Lambda$, $\Lambda'$, $\b$, $c_0$, $M_g$ and  $\cO$,
and may take a different value at each occurrence.

The fact $\hat{u}^\eps$ solves \eqref{eq:hjb_relax_p} implies that, for all $x\in \cO$,
\begin{align*}
0&=
H_\eps
\big(
\hat{\bL} \hat{u}^\eps(x)+\hat{\bf}(x)
\big)
= 
\max_{\lambda\in \Delta_K}
\big(\lambda^T(\hat{\bL} \hat{u}^\eps(x)+\hat{\bf}(x))
-\eps\rho(\lambda)
\big)\\
&\ge 
(\lambda^{u^\eps}(x))^T(\hat{\bL} \hat{u}^\eps(x)+\hat{\bf}(x))-\eps\rho(\lambda^{u^\eps}(x)),
\end{align*}
which, together with the fact that $\hat{u}^\eps=\ul{u}^\eps=\hat{g}$
and the classical maximum principle (see \cite[Theorem 3.7]{gilbarg1985}),
shows that $\hat{u}^\eps\ge \ul{u}^\eps$ on $\ol{\cO}$.

We now estimate  $\hat{u}^\eps- \ul{u}^\eps$ by assuming 
the function $H:\R^K\to \R$ in (H.\ref{assum:rho}) has a locally Lipschitz continuous Hessian.
By using the definition of the optimal control $\hat{\lambda}^{\hat{u}^\eps}$, we have
that 
$$
(\hat{\lambda}^{\hat{u}^\eps})^T(\hat{\bL} \hat{u}^\eps+\hat{\bf})-\eps\rho(\hat{\lambda}^{\hat{u}^\eps})=0,
\q\textnormal{in $\cO$}.
$$
By subtracting \eqref{eq:hjb_p_sub} from the above equation, we have
\begin{align*}
0&=\big[(\hat{\lambda}^{\hat{u}^\eps})^T(\hat{\bL} \hat{u}^\eps+\hat{\bf})-\eps\rho(\hat{\lambda}^{\hat{u}^\eps})\big]
-\big[(\lambda^{u^\eps})^T(\hat{\bL} \ul{u}^\eps+\hat{\bf})-\eps\rho(\lambda^{u^\eps})\big]\\
&=(\hat{\lambda}^{\hat{u}^\eps}-\lambda^{u^\eps})^T(\hat{\bL} \hat{u}^\eps+\hat{\bf})
+({\lambda}^{{u}^\eps})^T\hat{\bL} (\hat{u}^\eps-\ul{u}^\eps)
-\big(\eps\rho(\hat{\lambda}^{\hat{u}^\eps})-\eps\rho(\lambda^{u^\eps})\big),
\q\textnormal{in $\cO$}.
\end{align*}
Note that,
the \textit{a priori} estimate in Proposition \ref{prop:Holder_bdd} 
shows that,
under (H.\ref{assum:D}),  (H.\ref{assum:rho}) and  (H.\ref{assum:D_p}),
there exists a constant $\b_0=\b_0(n,\nu,M)\in (0,1)$,
 such that 
we have  for all $\b\in (0,\min(\b_0,\theta)]$ the estimates
$ |u^\eps|_{2,\b},|\hat{u}^\eps|_{2,\b}\le C$,
which, along with the fact that $\nabla H_\eps\in C^1(\R^K)$ and 
Lemma \ref{lemma:nemytskij}, implies the \textit{a priori} bounds
$ |\hat{\lambda}^{\hat{u}^\eps}|_{\b},|{\lambda}^{{u}^\eps}|_{\b}\le C$.
Hence, 
from any given  $\b\in (0,\min(\b_0,\theta)]$,
we can deduce from the  Schauder theory in \cite[Theorem 6.6]{gilbarg1985}
and the maximum principle in \cite[Theorem 3.7]{gilbarg1985}  that
\begin{align}\l{eq:u_hat-u_ul}
\begin{split}
|\hat{u}^\eps-\ul{u}^\eps|_{2,\b}
&\le 
C\big(
|(\hat{\lambda}^{\hat{u}^\eps}-\lambda^{u^\eps})^T(\hat{\bL} \hat{u}^\eps+\hat{\bf})|_{\b}
+|\eps\rho(\hat{\lambda}^{\hat{u}^\eps})-\eps\rho(\lambda^{u^\eps})|_\b
\big)\\
&\le 
C\big(
|\hat{\lambda}^{\hat{u}^\eps}-\lambda^{u^\eps}|_{\b}
+|\eps\rho(\hat{\lambda}^{\hat{u}^\eps})-\eps\rho(\lambda^{u^\eps})|_\b
\big).
\end{split}
\end{align}

By using 
the additional assumption that $H$ has a locally Lipschitz continuous Hessian, 
and the identity \eqref{eq:rho_nablaH}, 
we can  deduce that 
$\rho(\nabla H_\eps):\R^K\to \R$ is continuously differentiable with 
a 
locally Lipschitz continuous gradient, 
from which, we can obtain from  Lemma \ref{lemma:nemytskij} that
for any $\a\in (0,1]$,
the corresponding Nemytskij operator 
$(\eps\rho)(\nabla H_\eps):C^\a(\ol{\cO},\R^K)\to C^\a(\ol{\cO},\R)$ is locally Lipschitz continuous.
Hence, we can obtain from \eqref{eq:u_hat-u_ul}
and the definitions of 
$\lambda^{u^\eps}$
and  $\hat{\lambda}^{\hat{u}^\eps}$ (see \eqref{eq:ctrl_relax} and \eqref{eq:ctrl_relax_p})
 that
\begin{align*}
|\hat{u}^\eps-\ul{u}^\eps|_{2,\b}
&\le
C\big(
|\hat{\lambda}^{\hat{u}^\eps}-\lambda^{u^\eps}|_{\b}
+|
(\eps \rho)((\nabla H_\eps)(\hat{\bL} \hat{u}^\eps+\hat{\bf}))
-(\eps \rho)((\nabla H_\eps)({\bL} {u}^\eps+{\bf}))|_\b
\big)\\
&\le C\big(
|\hat{\lambda}^{\hat{u}^\eps}-\lambda^{u^\eps}|_{\b}
+|
(\hat{\bL} \hat{u}^\eps+\hat{\bf})-({\bL} {u}^\eps+{\bf})|_\b
\big)\\
&\le C\big(
|\hat{\lambda}^{\hat{u}^\eps}-\lambda^{u^\eps}|_{\b}
+|
(\hat{\bL}-\bL) \hat{u}^\eps+{\bL}( \hat{u}^\eps-{u}^\eps)+\hat{\bf}-{\bf}|_\b
\big),
\end{align*}
from which, we can conclude from the 
\textit{a priori} bound of $|\hat{u}^\eps|_{2,\b}$ and Theorem \ref{thm:u_hat-u} 
the desired estimate $|\hat{u}^\eps-\ul{u}^\eps|_{2,\b}\le C\cE_{\textnormal{per},\b}$.
\end{proof}

\section{First-order sensitivity equations for relaxed control problems}\l{sec:sensitivity}
In this section, we  proceed to derive a first-order Taylor expansion for the   value function
and the optimal control
of the relaxed control problem \eqref{eq:value_relax} with perturbed coefficients, 
which 
subsequently leads us to 
a first-order approximation of the  optimal strategy
for the perturbed problem 
based on the pre-computed optimal control.
The sensitivity equation further enables us to 
quantify the explicit dependence 
of the Lipschitz stability result \eqref{eq:u_hat-u} 
on the relaxation parameter $\eps$.

The following proposition establishes the Fr\'{e}chet differentiability of 
the fully nonlinear HJB operator with inhomogeneous boundary conditions.
For notational simplicity, 
for any given $\b\in (0,1]$,
and bounded open subset $\cO\subset \R^n$ with $C^{2,\b}$ boundary,
we shall introduce the Banach space $\Theta^\b$ for the coefficients:
\bb\l{eq:X_b}
\Theta^\b= \big(C^{\b}(\ol{\cO},\R^{n\t n})\t C^{\b}(\ol{\cO},\R^{n})\t C^{\b}(\ol{\cO})\t C^{\b}(\ol{\cO})\big)^K\t C^{2,\b}(\ol{\cO})
\ee
equipped with the product norm $|\cdot|_{\Theta^\b}$, 
and denote by $\boldsymbol{\vartheta}=((a_k,b_k,c_k,f_k)_{k\in \cK},g)$ a generic element in $\Theta^\b$.
We also denote by $C^{2,\b}(\p\cO)$ 
the Banach space of $C^{2,\b}$ functions defined on $\p\cO$ (see Remark \ref{rmk:trace}),
and  by $\tau_D:  C^{2,\b}(\ol{\cO}) \to  C^{2,\b}(\p\cO)$ the restriction operator on $\p\cO$.
Furthermore, for any given Banach spaces $X$ and $Y$,
we denote by $B(X,Y)$  the Banach space containing all continuous linear mappings from $X$ into $Y$,
equipped with the operator norm.

 \begin{Proposition}\l{prop:differentiability}
Suppose  (H.\ref{assum:rho}) holds. 
Let $\eps>0$, 
$\b\in (0,1]$,
 $\cO$ be a bounded domain in $\R^n$ with $C^{2,\b}$ boundary,
$H_\eps:\R^K\to \R$ be the function defined as in \eqref{eq:H_eps},
$\Theta^\b$ be the Banach space defined as in \eqref{eq:X_b},
and
$F^\b:\Theta^\b \t C^{2,\b}(\ol{\cO})\to C^{\b}(\ol{\cO})\t C^{2,\b}(\p{\cO})$ be the following HJB operator:
$$
F^\b:(\boldsymbol{\vartheta},u)\in  \Theta^\b\t C^{2,\b}(\ol{\cO})\mapsto 
F^\b[\boldsymbol{\vartheta},u]\coloneqq (H_\eps(\bL^{\boldsymbol{\vartheta}} u+\bf^{\,\boldsymbol{\vartheta}}),\tau_D(u-g^{\boldsymbol{\vartheta}})) \in C^{\b}(\ol{\cO})\t C^{2,\b}(\p{\cO}),
$$
where for any given $\boldsymbol{\vartheta}=((a_k,b_k,c_k,f_k)_{k\in \cK},g)\in \Theta^\b$, 
$\bf^{\,\boldsymbol{\vartheta}}=(f_k)_{k\in \cK}\in C^\b(\ol{\cO})^K$,
$g^{\boldsymbol{\vartheta}}=g$
and 
$\bL^{\boldsymbol{\vartheta}}=(\cL^{\boldsymbol{\vartheta}}_k)_{k\in \cK}$ is the elliptic operators satisfying   
$\cL^{\boldsymbol{\vartheta}}_k \phi= a^{ij}_k\p_{ij}\phi+b^i_k\p_i\phi-c_k \phi$
for all $k\in \cK$, $\phi\in C^2({\cO})$.

Then $F^\b$ is continuously differentiable 
with the  derivative 
$F^\b: \Theta^\b\t C^{2,\b}(\ol{\cO})\to B(\Theta^\b\t C^{2,\b}(\ol{\cO}), C^{\b}(\ol{\cO})\t C^{2,\b}(\p{\cO}))$
satisfying for all  $(\boldsymbol{\vartheta},u)\in \Theta^\b\t C^{2,\b}(\ol{\cO})$,  $\tilde{\boldsymbol{\vartheta}}\in \Theta^\b$ and $v\in C^{2,\b}(\ol{\cO})$ that
$$
(F^\b)'[\boldsymbol{\vartheta},u](\tilde{\boldsymbol{\vartheta}},v)
=\big((\nabla H_\eps)^T(\bL^{\boldsymbol{\vartheta}} u+\bf^{\,\boldsymbol{\vartheta}} )(\bL^{{\boldsymbol{\vartheta}}}v+\bL^{\tilde{\boldsymbol{\vartheta}}}u+\bf^{\,\tilde{\boldsymbol{\vartheta}}}),
\tau_D(v-g^{\tilde{\boldsymbol{\vartheta}}})\big).
$$

\end{Proposition}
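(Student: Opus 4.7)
The plan is to decompose $F^\b$ into a trivially differentiable boundary component and a composition of two smooth maps for the interior component, and then to assemble the claimed derivative via the chain rule. For the boundary part $F^\b_2[\boldsymbol{\vartheta},u]\coloneqq\tau_D(u-g^{\boldsymbol{\vartheta}})$, observe that $\boldsymbol{\vartheta}\mapsto g^{\boldsymbol{\vartheta}}=g$ is a continuous linear projection from $\Theta^\b$ into $C^{2,\b}(\ol{\cO})$, while $\tau_D:C^{2,\b}(\ol{\cO})\to C^{2,\b}(\p\cO)$ is continuous and linear by Remark \ref{rmk:trace}; hence $F^\b_2$ is itself continuous linear and therefore Fr\'{e}chet differentiable with $(F^\b_2)'[\boldsymbol{\vartheta},u](\tilde{\boldsymbol{\vartheta}},v)=\tau_D(v-g^{\tilde{\boldsymbol{\vartheta}}})$, which matches the second component of the proposed derivative.

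For the interior component $F^\b_1[\boldsymbol{\vartheta},u]\coloneqq H_\eps(\bL^{\boldsymbol{\vartheta}}u+\bf^{\,\boldsymbol{\vartheta}})$, I would write $F^\b_1=\Phi\circ G$, where $G:\Theta^\b\t C^{2,\b}(\ol{\cO})\to C^{\b}(\ol{\cO},\R^K)$ is defined by $G[\boldsymbol{\vartheta},u]\coloneqq\bL^{\boldsymbol{\vartheta}}u+\bf^{\,\boldsymbol{\vartheta}}$ and $\Phi:C^{\b}(\ol{\cO},\R^K)\to C^{\b}(\ol{\cO})$ is the Nemytskij operator induced by $H_\eps$, namely $\Phi[w](x)\coloneqq H_\eps(w(x))$. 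The $k$-th component of $G$ is a finite sum of terms of the form $a^{ij}_k\p_{ij}u$, $b^i_k\p_i u$ and $c_k u$, each bilinear in $(\boldsymbol{\vartheta},u)$, together with $f_k$, which is linear in $\boldsymbol{\vartheta}$. Using the H\"{o}lder algebra estimate $|\phi_1\phi_2|_\b\le |\phi_1|_\b|\phi_2|_\b$ on $C^{\b}(\ol{\cO})$, each such product defines a continuous bilinear form on the corresponding factors, so $G$ is a polynomial of degree at most two in its arguments, hence $C^\infty$-Fr\'{e}chet differentiable with
\[
G'[\boldsymbol{\vartheta},u](\tilde{\boldsymbol{\vartheta}},v)=\bL^{\boldsymbol{\vartheta}}v+\bL^{\tilde{\boldsymbol{\vartheta}}}u+\bf^{\,\tilde{\boldsymbol{\vartheta}}}.
\]

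Next, I would invoke Lemma \ref{lemma:nemytskij} applied to $\phi=H_\eps$, which under (H.\ref{assum:rho}) carries the smoothness required to make $\Phi$ continuously Fr\'{e}chet differentiable with derivative $\Phi'[w]h=(\nabla H_\eps)^T(w)\,h$. The chain rule then yields
\[
(F^\b_1)'[\boldsymbol{\vartheta},u](\tilde{\boldsymbol{\vartheta}},v)=(\nabla H_\eps)^T\bigl(\bL^{\boldsymbol{\vartheta}}u+\bf^{\,\boldsymbol{\vartheta}}\bigr)\bigl(\bL^{\boldsymbol{\vartheta}}v+\bL^{\tilde{\boldsymbol{\vartheta}}}u+\bf^{\,\tilde{\boldsymbol{\vartheta}}}\bigr),
\]
and concatenating with $(F^\b_2)'$ produces precisely the formula stated in the proposition; continuity of $(F^\b)'$ on $\Theta^\b\t C^{2,\b}(\ol{\cO})$ then follows by pairing continuity of $G$ with continuity of the derivative map $w\mapsto \Phi'[w]$. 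The principal obstacle is verifying that $\Phi$ is genuinely $C^1$ between H\"{o}lder spaces rather than merely continuous or locally Lipschitz, since Nemytskij operators on $C^{\b}$ typically lose one degree of smoothness relative to the generating nonlinearity; this delicate point is exactly what the stronger clause of Lemma \ref{lemma:nemytskij} supplies, after which the bilinearity of $G$, the linearity of the boundary trace, and the chain rule combine routinely.
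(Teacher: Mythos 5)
Your proof is correct and follows essentially the same route as the paper's: decomposing $F^\b$ into the linear boundary operator and the interior component $F^\b_1=\Phi\circ G$, applying Lemma \ref{lemma:nemytskij} to the Nemytskij operator $H_\eps$ (using that $H\in C^2(\R^K)$ from (H.\ref{assum:rho})), and invoking the chain rule. The only cosmetic difference is that you establish smoothness of $G$ by observing it is a continuous polynomial of degree two in $(\boldsymbol{\vartheta},u)$ via the H\"{o}lder algebra estimate, whereas the paper computes and checks continuity of the two partial derivatives separately; these are interchangeable arguments for the same conclusion.
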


\begin{proof}
We first write the HJB operator as $F^\b=(F_1,F_2)$,
where $F_1:\Theta^\b \t C^{2,\b}(\ol{\cO})\to C^{\b}(\ol{\cO})$
is the composition of the Nemytskij operator
$H_\eps: C^{\b}(\ol{\cO})^K\to  C^{\b}(\ol{\cO})$
 and the  mapping
 $G:(\boldsymbol{\vartheta},u )\in \Theta^\b\t  C^{2,\b}(\ol{\cO}) \mapsto G[\boldsymbol{\vartheta},u]\coloneqq \bL^{\boldsymbol{\vartheta}} u+\bf^{\,\boldsymbol{\vartheta}} \in C^{\b}(\ol{\cO})^K$, 
and $F_2: (\boldsymbol{\vartheta},u )\in \Theta^\b \t C^{2,\b}(\ol{\cO})\mapsto F_2[\boldsymbol{\vartheta},u]\coloneqq \tau_D(u-g^{\boldsymbol{\vartheta}})\in C^{2,\b}({\p\cO})$
is the linear boundary operator.
 
 Since the function $H_\eps$ is in $C^2(\R^K)$, we can deduce from 
 Lemma \ref{lemma:nemytskij}
 that
 the Nemytskij operator $H_\eps: C^{\b}(\ol{\cO})^K\to  C^{\b}(\ol{\cO})$
 is well-defined and 
 continuously differentiable with the Fr\'{e}chet derivative 
 $(H_\eps)'[u]=(\nabla H_\eps)^T(u)\in B(C^{\b}(\ol{\cO})^K,C^{\b}(\ol{\cO}))$ for all $u\in 
C^{\b}(\ol{\cO})^K$.

Moreover, 
since 
for any given $(\boldsymbol{\vartheta},u)\in \Theta^\b\t C^{2,\b}(\ol{\cO})$,
$G[\cdot,u]:\Theta^\b\to C^{\b}(\ol{\cO})^K$
and 
$G[\boldsymbol{\vartheta},\cdot]:C^{2,\b}(\ol{\cO})\to C^{\b}(\ol{\cO})^K$
are affine mappings,
one can easily compute the  partial derivatives
$\p_u G: \Theta^\b\t C^{2,\b}(\ol{\cO}) \to B(C^{2,\b}(\ol{\cO}), C^{\b}(\ol{\cO})^K)$ 
and 
$\p_{\boldsymbol{\vartheta}} G: \Theta^\b\t C^{2,\b}(\ol{\cO}) \to B(\Theta^\b, C^{\b}(\ol{\cO})^K)$ of $G$ as follows:
$(\p_u G)[\boldsymbol{\vartheta},u](v)=\bL^{{\boldsymbol{\vartheta}}}v$
and 
$(\p_{\boldsymbol{\vartheta}} G)[\boldsymbol{\vartheta},u](\tilde{\boldsymbol{\vartheta}})=\bL^{\tilde{\boldsymbol{\vartheta}}}u+\bf^{\,\tilde{\boldsymbol{\vartheta}}}$
 for all  $(\boldsymbol{\vartheta},u)\in \Theta^\b\t C^{2,\b}(\ol{\cO})$, $\tilde{\boldsymbol{\vartheta}}\in \Theta^\b$ and $v\in C^{2,\b}(\ol{\cO})$.
Moreover, it is clear that $\p_u G$ and $\p_{\boldsymbol{\vartheta}}G$ are both continuous, which implies that 
$G:\Theta^\b\t C^{2,\b}(\ol{\cO}) \to  C^{\b}(\ol{\cO})^K$ is continuously differentiable with derivative
$$
G'[\boldsymbol{\vartheta},u](v,\tilde{\boldsymbol{\vartheta}})=(\p_u G)[\boldsymbol{\vartheta},u](v)+(\p_{\boldsymbol{\vartheta}} G)[\boldsymbol{\vartheta},u](\tilde{\boldsymbol{\vartheta}})=\bL^{{\boldsymbol{\vartheta}}}v+\bL^{\tilde{\boldsymbol{\vartheta}}}u+\bf^{\,\tilde{\boldsymbol{\vartheta}}}
$$
for all  $(\boldsymbol{\vartheta},u)\in \Theta^\b\t C^{2,\b}(\ol{\cO})$, $\tilde{\boldsymbol{\vartheta}}\in \Theta^\b$ 
and $v\in C^{2,\b}(\ol{\cO})$ 
(see \cite[Theorem 7.2-3]{ciarlet2013}).

Therefore, by using the chain rule (see \cite[Theorem 7.1-3]{ciarlet2013}), we see the composite mapping $F_1: \Theta^\b\t C^{2,\b}(\ol{\cO})\to C^{\b}(\ol{\cO})$ is also continuously differentiable with the derivative 
$F_1'[\boldsymbol{\vartheta},u]=(H_\eps)'[G[\boldsymbol{\vartheta},u]]G'[\boldsymbol{\vartheta},u]$ for all $(\boldsymbol{\vartheta},u)\in \Theta^\b\t C^{2,\b}(\ol{\cO})$.
This, along with the fact that $F_2:C^{2,\b}(\ol{\cO})\t \Theta^\b\to  C^{2,\b}({\p\cO})$  is a linear operator,
enables us to conclude the desired  differentiability of the operator $F^\b=(F_1,F_2)$.
\end{proof}

With the above proposition in hand, we are ready to derive the first-order sensitivity equation for the value function 
of the relaxed control problem with respect to the parameter perturbations.

\begin{Theorem}\l{thm:sensitivity}
Suppose (H.\ref{assum:D}) and (H.\ref{assum:rho}) hold. 
Let $\eps>0$,
$(\Theta^\b)_{\b\in (0,1]}$ be the Banach spaces defined as in \eqref{eq:X_b},
$\boldsymbol{\vartheta}_0=((\sigma_k\sigma^T_k/2,b_k,c_k,f_k)_{k\in \cK},g)$,
 ${u}^\eps\in C(\ol{\cO}) \cap C^2(\cO)$ 
 be the solution to the Dirichlet problem \eqref{eq:hjb_relax}
 (with the coefficients $\boldsymbol{\vartheta}_0$),
and $\b_0\in (0,1)$ be the  constant  in Proposition \ref{prop:Holder_bdd}.
 
Then 
it holds 
for each $\b\in (0,\min(\b_0,\theta)]$ that,
there exists a neighborhood $\cV$ of $\boldsymbol{\vartheta}_0$ in $\Theta^\b$,
 a neighborhood $\cW$ of $u^\eps$ in $C^{2,\b}(\ol{\cO})$,
 and 
 a  mapping $\cS:\cV\to \cW$ satisfying the following properties:
 \bn[(1)]
 \item 
 for each $\tilde{\boldsymbol{\vartheta}}\in \cV$, $\cS[\tilde{\boldsymbol{\vartheta}}]$ is the classical solution to 
 the following Dirichlet problem: 
$$
H_\eps(\bL^{\tilde{\boldsymbol{\vartheta}}} u+\bf^{\,\tilde{\boldsymbol{\vartheta}}})=0
\q \textnormal{in $\cO$}, 
\q\q
{u}={g}^{\tilde{\boldsymbol{\vartheta}}} \q \textnormal{on $\p\cO$,}
$$
where $(\bL^{\,\boldsymbol{\vartheta}},\bf^{\,\boldsymbol{\vartheta}}, g^{\,\boldsymbol{\vartheta}})$ are defined as in Proposition \ref{prop:differentiability} for each $\boldsymbol{\vartheta}\in \Theta^\b$, 
 \item
$\cS:\cV\to \cW$ is continuously differentiable with 
 $\cS[{\boldsymbol{\vartheta}}_0+\delta\boldsymbol{\vartheta}]=u^\eps+\cS'[{\boldsymbol{\vartheta}}_0]\delta \boldsymbol{\vartheta}+o(|\delta\boldsymbol{\vartheta}|_{\Theta^\b})$ as $|\delta\boldsymbol{\vartheta}|_{\Theta^\b}\to 0$, and
 for each $\delta \boldsymbol{\vartheta}\in \Theta^\b$, $\delta u=\cS'[{\boldsymbol{\vartheta}}_0]\delta \boldsymbol{\vartheta}\in C^{2,\b}(\ol{\cO})$ is the solution to the following Dirichlet problem:
 \bb\l{eq:hjb_sensitivity}
(\lambda^{u^\eps})^T(
\bL^{\boldsymbol{\vartheta}_0} \delta u
+\bL^{\,\delta \boldsymbol{\vartheta}}u^\eps
+
\bf^{\,\delta \boldsymbol{\vartheta}})=0
\q\textnormal{in $\cO$}, 
\qq
\delta u=g^{\,\delta \boldsymbol{\vartheta}} \q \textnormal{on $\p\cO$,}
\ee
where 
$\lambda^{u^\eps}:\ol{\cO}\to \Delta_K$ is the function defined as in \eqref{eq:ctrl_relax}.
 \en
\end{Theorem}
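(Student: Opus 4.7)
The plan is to apply the implicit function theorem (IFT) to the fully nonlinear HJB operator $F^\b$ introduced in Proposition \ref{prop:differentiability}, reading the map $\boldsymbol{\vartheta}\mapsto u^\eps(\boldsymbol{\vartheta})$ as the implicit solution of the equation $F^\b[\boldsymbol{\vartheta},u]=0$. Fix $\b\in(0,\min(\b_0,\theta)]$. By Theorem \ref{thm:wp_relax}, $u^\eps\in C^{2,\b}(\ol{\cO})$ solves \eqref{eq:hjb_relax}, so $F^\b[\boldsymbol{\vartheta}_0,u^\eps]=0$; and Proposition \ref{prop:differentiability} gives continuous differentiability of $F^\b:\Theta^\b\t C^{2,\b}(\ol{\cO})\to C^\b(\ol{\cO})\t C^{2,\b}(\p\cO)$. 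The only hypothesis of the IFT that requires work is that the partial Fr\'echet derivative
\begin{equation*}
\p_u F^\b[\boldsymbol{\vartheta}_0,u^\eps]:v\mapsto \Big((\nabla H_\eps)^T(\bL^{\boldsymbol{\vartheta}_0}u^\eps+\bf^{\,\boldsymbol{\vartheta}_0})\bL^{\boldsymbol{\vartheta}_0} v,\ \tau_D v\Big)=\Big((\lambda^{u^\eps})^T\bL^{\boldsymbol{\vartheta}_0}v,\ \tau_D v\Big),
\end{equation*}
where the second equality uses the identification $\lambda^{u^\eps}=(\nabla H_\eps)(\bL^{\boldsymbol{\vartheta}_0}u^\eps+\bf^{\,\boldsymbol{\vartheta}_0})$ from \eqref{eq:ctrl_relax}, is a linear homeomorphism from $C^{2,\b}(\ol{\cO})$ onto $C^\b(\ol{\cO})\t C^{2,\b}(\p\cO)$.

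To verify this, I would observe that the linearized operator $\tilde{\cL}\coloneqq(\lambda^{u^\eps})^T\bL^{\boldsymbol{\vartheta}_0}$ has $C^\b(\ol{\cO})$-coefficients: indeed $\lambda^{u^\eps}\in C^\b(\ol{\cO},\Delta_K)$ by Theorem \ref{thm:wp_relax}, while the coefficients of $\bL^{\boldsymbol{\vartheta}_0}$ are in $C^\b(\ol{\cO})$ by (H.\ref{assum:D}). Because $\lambda^{u^\eps}(x)\in\Delta_K$ pointwise, the principal part $\sum_{k=1}^K\lambda^{u^\eps,k}a_k$ is a convex combination of matrices each bounded below by $(\nu/2)I_n$ (from $\sigma_k\sigma_k^T\ge \nu I_n$ in (H.\ref{assum:D})), so $\tilde{\cL}$ is uniformly elliptic on $\ol{\cO}$; moreover the zeroth-order coefficient $\sum_k\lambda^{u^\eps,k}c_k\ge 0$ since each $c_k\ge 0$. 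Standard Schauder theory (\cite[Theorem 6.14]{gilbarg1985}) then gives, for every datum $(h,\phi)\in C^\b(\ol{\cO})\t C^{2,\b}(\p\cO)$, a unique classical solution $v\in C^{2,\b}(\ol{\cO})$ of $\tilde{\cL}v=h$ in $\cO$, $v=\phi$ on $\p\cO$, together with a Schauder estimate $|v|_{2,\b}\le C(|h|_{\b}+|\phi|_{2,\b;\p\cO})$. Bijectivity plus this continuity estimate furnish the required linear homeomorphism.

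With this in hand the IFT delivers neighborhoods $\cV\subset\Theta^\b$ of $\boldsymbol{\vartheta}_0$ and $\cW\subset C^{2,\b}(\ol{\cO})$ of $u^\eps$ together with a continuously differentiable map $\cS:\cV\to\cW$ satisfying $F^\b[\tilde{\boldsymbol{\vartheta}},\cS[\tilde{\boldsymbol{\vartheta}}]]=0$ on $\cV$, which unpacks to the Dirichlet problem in assertion~(1). Differentiating this identity at $\boldsymbol{\vartheta}_0$ and plugging $(v,\tilde{\boldsymbol{\vartheta}})=(\delta u,\delta\boldsymbol{\vartheta})$ into the explicit expression for $(F^\b)'$ recorded in Proposition \ref{prop:differentiability} gives
\begin{equation*}
(\lambda^{u^\eps})^T\big(\bL^{\boldsymbol{\vartheta}_0}\delta u+\bL^{\delta\boldsymbol{\vartheta}}u^\eps+\bf^{\,\delta\boldsymbol{\vartheta}}\big)=0\q\textnormal{in $\cO$},\qquad \delta u=g^{\,\delta\boldsymbol{\vartheta}}\q\textnormal{on $\p\cO$,}
\end{equation*}
which is \eqref{eq:hjb_sensitivity}; the Taylor expansion asserted in~(2) is then immediate from Fr\'echet differentiability of $\cS$.

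I expect the main obstacle to be the bijectivity of $\p_u F^\b[\boldsymbol{\vartheta}_0,u^\eps]$, which is exactly the point where the choice of H\"older spaces is essential: it simultaneously provides the $C^\b$-regularity of $\lambda^{u^\eps}$ needed to invoke Schauder theory on the linearization, and the smoothness of the Nemytskij operator $H_\eps$ on $C^\b(\ol{\cO})^K$ needed to differentiate $F^\b$ in the first place (cf.\ Remark \ref{rmk:Holder_Sobolev}). All the necessary ingredients—Nemytskij regularity (Lemma \ref{lemma:nemytskij}), preservation of uniform ellipticity under convex combinations, and non-negativity of $\sum_k\lambda^{u^\eps,k}c_k$—are already in place, so the argument reduces to collating them and appealing to the IFT.
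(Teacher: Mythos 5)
Your proposal is correct and follows essentially the same route as the paper: verify $F^\b[\boldsymbol{\vartheta}_0,u^\eps]=0$, invoke Proposition \ref{prop:differentiability} for $C^1$-ness of $F^\b$, show $\p_u F^\b[\boldsymbol{\vartheta}_0,u^\eps]$ is a bijection from $C^{2,\b}(\ol{\cO})$ onto $C^\b(\ol{\cO})\t C^{2,\b}(\p\cO)$ via the maximum principle and Schauder theory (using $\lambda^{u^\eps}\in C^\b(\ol{\cO},\Delta_K)$, uniform ellipticity of the convex combination, and non-negativity of $\sum_k\lambda^{u^\eps,k}c_k$), then apply the implicit function theorem and read off \eqref{eq:hjb_sensitivity} from the explicit formula for $(F^\b)'$. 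The only cosmetic difference is that the paper splits bijectivity into injectivity (maximum principle) and surjectivity (extension of $\hat g$ via \cite[Lemma 6.38]{gilbarg1985} plus \cite[Theorem 6.14]{gilbarg1985}), whereas you cite the Schauder well-posedness plus estimate to assert the homeomorphism directly — these are equivalent.
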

\begin{proof}
The desired result comes from a direct application of the implicit function theorem (see \cite[Theorem 7.13-1]{ciarlet2013}).
Theorem \ref{thm:wp_relax} shows that 
the Dirichlet problem \eqref{eq:hjb_relax}
with the coefficients $\boldsymbol{\vartheta}_0$
admits a solution $u^\eps\in C^{2,\b}(\ol{\cO})$
for each $\b\in (0,\min(\b_0,\theta)]$.

Let $\b\in (0,\min(\b_0,\theta)]$ be a fixed constant. We shall consider the mapping 
$F^\b:\Theta^\b\t C^{2,\b}(\ol{\cO})\to C^{\b}(\ol{\cO})\t C^{2,\b}(\p{\cO})$ defined as follows:
$$
F^\b:(\boldsymbol{\vartheta},u)\in  \Theta^\b\t C^{2,\b}(\ol{\cO})\mapsto 
F^\b[\boldsymbol{\vartheta},u]\coloneqq (H_\eps(\bL^{\boldsymbol{\vartheta}} u+\bf^{\,\boldsymbol{\vartheta}}),\tau_D(u-g^{\boldsymbol{\vartheta}})) \in C^{\b}(\ol{\cO})\t C^{2,\b}(\p{\cO}).
$$
Due to the fact that 
$u^\eps\in C^{2,\b}(\ol{\cO})$ satisfies \eqref{eq:hjb_relax} with the coefficients $\boldsymbol{\vartheta}_0$,
we have $H_\eps(\bL^{\boldsymbol{\vartheta}_0} u^\eps+\bf^{\,\boldsymbol{\vartheta}_0})=0$ in $\cO$
 and 
$H_\eps(\bL^{\boldsymbol{\vartheta}_0} u^\eps+\bf^{\,\boldsymbol{\vartheta}_0})\in C^{\b}(\ol{\cO})$, 
which subsequently implies that
$H_\eps(\bL^{\boldsymbol{\vartheta}_0} u^\eps+\bf^{\,\boldsymbol{\vartheta}_0})=0$ on $\ol{\cO}$. 
The boundary condition of \eqref{eq:hjb_relax} implies that 
$\tau_D(u^{\eps}-g^{\boldsymbol{\vartheta}_0})=0$ in $C^{2,\b}(\p{\cO})$. 
Hence $F^\b[\boldsymbol{\vartheta}_0,u^\eps]=0$.

Proposition \ref{prop:differentiability} shows that $F^\b$ is continuously differentiable on $\Theta^\b\t C^{2,\b}(\ol{\cO})$, and 
for each $(\tilde{\boldsymbol{\vartheta}},v)\in \Theta^\b\t C^{2,\b}(\ol{\cO})$,
\begin{align*}
\p_u  F^\b[\boldsymbol{\vartheta}_0,u^\eps](v)
&=\big((\nabla H_\eps)^T(\bL^{\boldsymbol{\vartheta}_0} u^\eps+\bf^{\,\boldsymbol{\vartheta}_0} )\bL^{{\boldsymbol{\vartheta}_0}}v, \tau_Dv\big)
=\big((\lambda^{u^\eps})^T\bL^{{\boldsymbol{\vartheta}_0}}v, \tau_Dv\big),\\
\p_{\boldsymbol{\vartheta}}  F^\b[\boldsymbol{\vartheta}_0,u^\eps](\tilde{\boldsymbol{\vartheta}})
&=\big((\nabla H_\eps)^T(\bL^{\boldsymbol{\vartheta}_0} u^\eps+\bf^{\,\boldsymbol{\vartheta}_0} )(\bL^{\tilde{\boldsymbol{\vartheta}}}u^\eps+\bf^{\,\tilde{\boldsymbol{\vartheta}}}),
-\tau_Dg^{\tilde{\boldsymbol{\vartheta}}}\big)
=\big((\lambda^{u^\eps})^T(\bL^{\tilde{\boldsymbol{\vartheta}}}u^\eps+\bf^{\,\tilde{\boldsymbol{\vartheta}}}),
-\tau_Dg^{\tilde{\boldsymbol{\vartheta}}}\big),
\end{align*}
where we have used the definition  of $\lambda^{u^\eps}\in C^\b(\ol{\cO},\Delta_K)$ (see \eqref{eq:ctrl_relax}).
The classical maximum principle (see e.g.~\cite[Theorem 3.7]{gilbarg1985}) implies
 that the map $\p_u  F^\b[\boldsymbol{\vartheta}_0,u^\eps](\cdot):C^{2,\b}(\ol{\cO})\to C^{\b}(\ol{\cO})\t C^{2,\b}(\p\cO)$ is an injection.
We now show it is also a surjection.
Let $(\hat{f},\hat{g})\in C^{\b}(\ol{\cO})\t C^{2,\b}(\p\cO)$ be given. Then the assumption that $\p\cO\in C^{2,\b}$ enables us to apply \cite[Lemma 6.38]{gilbarg1985}
and
extend   $\hat{g}$ to a function in $C^{2,\b}(\ol{\cO})$, which is still denoted by  $\hat{g}$.
The fact that $\lambda^{u^\eps}\in C^\b(\ol{\cO},\Delta_K)$ (see Theorem \ref{thm:wp_relax}) 
and the  elliptic regularity theory (see \cite[Theorem 6.14]{gilbarg1985}) ensure that the Dirichlet problem 
$\p_u  F^\b[\boldsymbol{\vartheta}_0,u^\eps](v)=(\hat{f},\hat{g}) $ admits a unique solution  $v\in C^{2,\b}(\ol{\cO})$.
Hence we see  
$\p_u  F^\b[\boldsymbol{\vartheta}_0,u^\eps]:C^{2,\b}(\ol{\cO})\to C^{\b}(\ol{\cO})\t C^{2,\b}(\p\cO)$ is a bijection.

Therefore, the implicit function theorem (see \cite[Theorem 7.13-1]{ciarlet2013}) ensures the existence of $\cS\in C^1(\cV,\cW)$ with derivative 
$\cS'[\boldsymbol{\vartheta}_0]=-(\p_u  F^\b[\boldsymbol{\vartheta}_0,u^\eps])^{-1}\p_{\boldsymbol{\vartheta}}  F^\b[\boldsymbol{\vartheta}_0,u^\eps]\in B(\Theta^\b, C^{2,\b}(\ol{\cO}))$.
Hence we have 
$\cS[{\boldsymbol{\vartheta}}_0+\delta\boldsymbol{\vartheta}]=u^\eps+\cS'[\boldsymbol{\vartheta}_0]\delta \boldsymbol{\vartheta}+o(|\delta\boldsymbol{\vartheta}|_{\Theta^\b})$ as $|\delta\boldsymbol{\vartheta}|_{\Theta^\b}\to 0$. Let
$\delta \boldsymbol{\vartheta}\in \Theta^\b$ and $\delta u=\cS'[\boldsymbol{\vartheta}_0]\delta \boldsymbol{\vartheta}$,
the characterization of partial derivatives of $F^\b$ enables us to conclude that $\delta u$ satisfies \eqref{eq:hjb_sensitivity}.
\end{proof}
\begin{Remark}\l{rmk:sensitivity_ctrl}
We can further obtain a first-order expansion of the optimal control $\lambda^{u^\eps}$ in terms of
the  perturbations of the coefficients. 
If $\eps>0$ and  the function $H$ in (H.\ref{assum:rho}) is in $C^3(\R^K)$ (c.f.~${H}_{\textnormal{en}}$ and ${H}_{\textnormal{chks}}$ in Section \ref{sec:relax}), 
then Lemma \ref{lemma:nemytskij} shows that 
$\nabla H_\eps: C^\a(\ol{\cO},\R^K)\to C^\a(\ol{\cO},\R^K)$, $\a\in (0,1]$,
is continuously differentiable with derivative 
$(\nabla H_\eps)'[u]h=(\nabla^2 H_\eps)(u)h$ for all $u,h\in C^\a(\ol{\cO},\R^K)$,
where $\nabla^2 H_\eps$ is the Hessian of $H_\eps$.
Hence, by using the chain rule and Theorem \ref{thm:sensitivity}, we have for all $\b\in (0,\min(\b_0,\theta)]$ that
$$
\lambda^{\cS[{\boldsymbol{\vartheta}}_0+\delta\boldsymbol{\vartheta}]}=\lambda^{u^\eps}+
\big((\nabla ^2H_\eps)(\bL^{\boldsymbol{\vartheta}_0} u^\eps+\bf^{\,\boldsymbol{\vartheta}_0} )\big)
\big(\bL^{\boldsymbol{\vartheta}_0} \delta u+\bL^{\delta \boldsymbol{\vartheta}} u^\eps+\bf^{\,\delta \boldsymbol{\vartheta}} \big)+o(|\delta \boldsymbol{\vartheta}|_{\Theta^\b})
$$
as $|\delta\boldsymbol{\vartheta}|_{\Theta^\b}\to 0$,
where 
$\lambda^{\cS[{\boldsymbol{\vartheta}}_0+\delta\boldsymbol{\vartheta}]}$ is the optimal feedback control  
of the relaxed control problem with the perturbed coefficients ${\boldsymbol{\vartheta}}_0+\delta\boldsymbol{\vartheta}$,
and
$\delta u$ is the classical solution to  \eqref{eq:hjb_sensitivity}.
\end{Remark}


With the sensitivity equation \eqref{eq:hjb_sensitivity} in hand,
we now estimate the  precise dependence of $\delta u$ 
on the relaxation parameter $\eps$,
which strengthens the Lipschitz stability result \eqref{eq:u_hat-u} 
by quantifying 
the explicit $\eps$-dependence  of the (local) Lipschitz constant.
Note that Remark \ref{rmk:unstable_eps0} shows that 
 the value function   \eqref{eq:value} 
(in the $C^{2,\b}$-norm)  
 does not depend continuously on the $C^{\b}$-perturbation of the  parameters,
which suggests that for a fixed $\delta \boldsymbol{\vartheta}\in \Theta^\b$, 
the $|\cdot|_{2,\b}$-norm of $\delta u$ will blow up as 
the  parameter $\eps$ tends to $0$.

Since the H\"{o}lder norm of the function $\lambda^{u^\eps}$ in \eqref{eq:hjb_sensitivity}
tends to infinity as $\eps\to 0$,
we first present  a precise \textit{a priori} estimate for
 the classical solutions to linear elliptic equations with $\eps$-dependent coefficients.
The proof will be postponed to Appendix \ref{appendix:lemmas},
where we  first reduce the equation to a constant coefficient equation  involving only second-order terms,
and then apply the classical Schauder estimate.

\begin{Proposition}\l{prop:schauder_eps}
Let 
$\a \in [0,1]$, $\b\in (0,1)$, $\nu, \Lambda>0$, and 
$\cO$ be a bounded domain in $\R^n$
with  $C^{2,\b}$ boundary.
For every $\eps\in (0,1]$, let 
$a_\eps:\ol{\cO}\to \R^{n\t n}$,
$b_\eps:\ol{\cO}\to \R^{n}$
and $c_\eps:\ol{\cO}\to [0,\infty) $ be given functions
satisfying $a_\eps \ge \nu I_n$ on $\ol{\cO}$.
Suppose   that 
$[a^{ij}_\eps]_0,[b^i_\eps]_0, [c_\eps]_0\le \Lambda$
and 
$[a^{ij}_\eps]_\b,[b^i_\eps]_\b, [c_\eps]_\b\le \Lambda\eps^{-\a}$
for all $\eps\in (0,1]$ and $i,j=1,\ldots, n$.
Then for every 
$\eps\in (0,1]$, $f\in C^\b(\ol{\cO})$ and $g\in C^{2,\b}(\ol{\cO})$,
the Dirichlet problem
$$
 a^{ij}_\eps\p_{ij}w+b^i_\eps\p_iw-c_\eps w+f=0, 
\q\textnormal{in $\cO$}, 
\q\q
w=g \q \textnormal{on $\p\cO$}
$$
admits a unique  solution  $w^\eps\in C^{2,\b}(\ol{\cO})$
satisfying
the following estimate  with a constant $C=C(n,\b, \nu,\Lambda,\cO)$:
$$
|w^\eps|_{2,\b}\le C\big(
\eps^{-\a (\b+2)/\b}|f|_{0}
+ [f]_{\b}+\eps^{-\a (\b+2)/\b}|g|_{2,\b}
\big).
$$

\end{Proposition}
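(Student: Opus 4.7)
The plan is a localized scaling argument that converts the singular seminorm bounds $[a_\eps]_\b,[b_\eps]_\b,[c_\eps]_\b\le\Lambda\eps^{-\a}$ into uniformly bounded ones after rescaling, at the cost of explicit $\eps$-dependent weights on the data. I would start with two preliminaries. First, for each fixed $\eps>0$ the coefficients lie in $C^\b(\ol{\cO})$, so unique classical solvability of $w^\eps$ in $C^{2,\b}(\ol{\cO})$ follows from the standard linear Schauder theory (Theorem 6.14 in \cite{gilbarg1985}). Second, since only the sup-norm bounds $|a_\eps|_0,|b_\eps|_0,|c_\eps|_0\le\Lambda$ (and not their Hölder seminorms) enter the maximum principle, Theorem 3.7 in \cite{gilbarg1985} delivers an $\eps$-independent sup bound $|w^\eps|_0\le C(|f|_0+|g|_0)$ with $C=C(n,\nu,\Lambda,\cO)$.

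The core is to set $r=\eps^{\a/\b}\in(0,1]$, the natural length scale at which the coefficient oscillation becomes $O(1)$. For each $x_0\in\ol{\cO}$, introduce the rescaled function $\tilde w(y)=w^\eps(x_0+ry)$ on the unit ball (or on a unit half-ball after flattening $\p\cO$ by a $C^{2,\b}$-diffeomorphism, whose norms depend only on $\cO$, when $x_0$ lies within distance $r$ of $\p\cO$). The rescaled coefficients $\tilde a^{ij}(y)=a^{ij}_\eps(x_0+ry)$, $\tilde b^i(y)=r\, b^i_\eps(x_0+ry)$, $\tilde c(y)=r^2 c_\eps(x_0+ry)$ retain the ellipticity $\tilde a\ge\nu I_n$ and satisfy uniform $C^\b$-bounds in $\eps$, since $[\tilde a]_\b=r^\b[a_\eps]_\b\le\Lambda\eps^\a\eps^{-\a}=\Lambda$, with analogous estimates (picking up harmless factors of $r\le 1$) for $\tilde b,\tilde c$. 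Applying the classical Schauder estimate (Theorem 6.2 for interior, Theorem 6.6 for boundary points in \cite{gilbarg1985}) to the rescaled problem $\tilde a^{ij}\p_{ij}\tilde w+\tilde b^i\p_i\tilde w-\tilde c\tilde w+r^2 f(x_0+r\,\cdot)=0$ then yields $|\tilde w|_{2,\b;B_{1/2}}\le C(|\tilde f|_\b+|\tilde g|_{2,\b}+|\tilde w|_0)$ with $C$ independent of $\eps$.

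Unscaling via $[\tilde w]_{2,\b;B_{1/2}}=r^{2+\b}[w^\eps]_{2,\b;B_{r/2}(x_0)}$, $|\tilde f|_\b\le r^2|f|_0+r^{2+\b}[f]_\b$ and $|\tilde g|_{2,\b}\le C|g|_{2,\b}$, then dividing by $r^{2+\b}$, produces the local bound $[w^\eps]_{2,\b;B_{r/2}(x_0)}\le C(\eps^{-\a}|f|_0+[f]_\b+\eps^{-\a(\b+2)/\b}|g|_{2,\b}+\eps^{-\a(\b+2)/\b}|w^\eps|_0)$ together with the lower-order bound $|D^2 w^\eps|_{0;B_{r/2}(x_0)}\le C(|f|_0+r^\b[f]_\b+\eps^{-2\a/\b}|g|_{2,\b}+\eps^{-2\a/\b}|w^\eps|_0)$. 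A finite cover of $\ol{\cO}$ by balls $B_{r/2}(x_0^{(i)})$ globalizes the estimate: pairs $(x,y)$ with $|x-y|<r/4$ lie in a common ball and are controlled by the local Hölder bound, while pairs with $|x-y|\ge r/4$ contribute at most $Cr^{-\b}|D^2 w^\eps|_0$, and the identity $r^{-\b}\cdot\eps^{-2\a/\b}=\eps^{-\a(\b+2)/\b}$ converts this into the claimed exponent. Substituting the step-one sup bound for $|w^\eps|_0$ absorbs it into the $|f|_0$ and $|g|_{2,\b}$ contributions, and the same argument handles the lower-order components of $|w^\eps|_{2,\b}$.

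The main obstacle is the careful bookkeeping of $\eps$-factors — in particular, verifying that the $[f]_\b$ term never picks up a singular $\eps^{-\a}$ prefactor. This hinges on the asymmetric weights $r^2$ versus $r^{2+\b}$ appearing in $|\tilde f|_\b$ and on the cancellation $r^{-\b}\cdot r^\b[f]_\b=[f]_\b$ in the long-range contribution to the Hölder seminorm. A secondary obstacle is the boundary version of the scaling: the prescribed $C^{2,\b}$-regularity of $\p\cO$ is essential so that the flattening chart, and therefore the constant in the boundary Schauder estimate applied to $\tilde w$, depends neither on $x_0$ nor on $\eps$.
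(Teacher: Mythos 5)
Your proof is correct and reaches the same estimate, but by a genuinely different route. The paper's proof is a Korn-type freezing argument: it introduces a partition of unity $(\xi_m)$ subordinate to balls of radius $\rho=(2C\Lambda\eps^{-\a})^{-1/\b}\sim\eps^{\a/\b}$, writes out the equation for $w\xi_m$ with the leading coefficient frozen at the ball center, applies the classical Schauder estimate to the resulting constant-coefficient problem, absorbs the freezing-error term $(a^{ij}_\eps(x_m)-a^{ij}_\eps)\p_{ij}(w\xi_m)$ by choosing $\rho$ small enough, and then uses the $\rho$-dependent bounds $|\xi_m|_{\gamma}\lesssim\rho^{-\gamma}$ together with repeated interpolation inequalities to land on the claimed exponents; the $|w|_0$ factor is then killed by the maximum principle. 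Your argument instead works at the same natural scale $r=\eps^{\a/\b}$ but by rescaling $\tilde w(y)=w^\eps(x_0+ry)$, so that the rescaled coefficients become \emph{uniformly} $C^\b$-bounded and the standard Schauder estimate applies with an $\eps$-independent constant; the exponents then fall out of the $r^{2+\b}$ unscaling factor, with the identity $r^{-\b}\cdot r^{-2}=\eps^{-\a(\b+2)/\b}$ doing the bookkeeping during globalization, and the maximum principle closing the argument exactly as in the paper. Both proofs isolate the same critical length scale $\eps^{\a/\b}$; yours makes the provenance of the exponent $\a(\b+2)/\b$ perhaps more transparent (it is literally a dimensional-analysis fact), while the paper's cutoff-and-freeze approach avoids rescaling the domain and can therefore cite the local Schauder machinery of Chen--Wu without having to separately flatten the boundary to a unit scale. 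One small thing worth tightening in your write-up: the finite cover should be by $(r/4)$-balls so that $|x-y|<r/4$ guarantees $x,y$ lie in a common $B_{r/2}(x_0^{(i)})$, and for centers within distance $r$ of $\p\cO$ the cited interior estimate (\cite[Theorem 6.2]{gilbarg1985}) must be replaced by the local boundary Schauder estimate after flattening (\cite[Lemma 6.4, Corollary 6.7]{gilbarg1985}), with the flattening chart applied before the $r$-rescaling so its norms depend only on $\p\cO$. Your note that the asymmetric weight $r^2$ vs.\ $r^{2+\b}$ on $|\tilde f|_\b$ is what keeps $[f]_\b$ from acquiring a singular prefactor is the right observation and corresponds, in the paper's version, to the interpolation step isolating $[f]_\b$ from $|f|_0$.
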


Now we present the \textit{a priori} estimate of $\delta u$ exhibiting 
its explicit $\eps$-dependence (cf. \eqref{eq:u_hat-u}),
which applies to
relaxed control problems with  reward functions generated by 
${H}_{\textnormal{en}}$, ${H}_{\textnormal{chks}}$
and ${H}_{\textnormal{zang}}$.

\begin{Theorem}\l{thm:deltau_eps}
Assume the setting of Theorem \ref{thm:sensitivity}
and in addition that 
 the function $H:\R^K\to \R$ in (H.\ref{assum:rho})
has a  Lipschitz continuous gradient. 
Let $\b_0\in (0,1)$ be the  constant  in Proposition \ref{prop:Holder_bdd}
and $\bar{\b}_0=\min(\b_0,\theta)$.
Then 
it holds for all $\eps\in (0,1]$, $\b\in (0,\bar{\b}_0]$ and $\delta \boldsymbol{\vartheta}\in \Theta^\b$ that,
the classical solution $\delta u$ to the Dirichlet problem  \eqref{eq:hjb_sensitivity}
satisfies the estimate
$
|\delta u|_{2,\b}\le C
\eps^{- (\b+2)/\bar{\b}_0}
|\delta\boldsymbol{\vartheta}|_{\Theta^\b},
$
where  $C$ is a constant independent of $\eps$ and $\delta  \boldsymbol{\vartheta}$.
\end{Theorem}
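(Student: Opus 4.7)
The plan is to recast the sensitivity equation \eqref{eq:hjb_sensitivity} as a linear uniformly elliptic Dirichlet problem with $\eps$-dependent coefficients and then apply Proposition \ref{prop:schauder_eps} with a judiciously chosen exponent $\a$. Expanding $\bL^{\boldsymbol{\vartheta}_0}=(\cL^{\boldsymbol{\vartheta}_0}_k)_{k\in\cK}$ and using $\lambda^{u^\eps}(x)\in\Delta_K$, \eqref{eq:hjb_sensitivity} rewrites as $\tilde a_\eps^{ij}\p_{ij}\delta u+\tilde b_\eps^i\p_i\delta u-\tilde c_\eps\,\delta u+\tilde f_\eps=0$ in $\cO$ with $\delta u=g^{\,\delta\boldsymbol{\vartheta}}$ on $\p\cO$, where $(\tilde a_\eps,\tilde b_\eps,\tilde c_\eps)=\sum_{k\in\cK}\lambda^{u^\eps,k}(a_k,b_k,c_k)$ and $\tilde f_\eps=(\lambda^{u^\eps})^T(\bL^{\,\delta\boldsymbol{\vartheta}}u^\eps+\bf^{\,\delta\boldsymbol{\vartheta}})$. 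Since $a_k=\sigma_k\sigma_k^T/2\ge(\nu/2)I_n$ and $c_k\ge 0$ by (H.\ref{assum:D}), and $\lambda^{u^\eps}$ takes values in $\Delta_K$, the matrix $\tilde a_\eps$ satisfies $\tilde a_\eps\ge(\nu/2)I_n$ and $\tilde c_\eps\ge 0$, both with constants independent of $\eps$; further, (H.\ref{assum:D}) together with $|\lambda^{u^\eps}|_0\le 1$ yields the uniform-in-$\eps$ bound $|\tilde a_\eps|_0+|\tilde b_\eps|_0+|\tilde c_\eps|_0\le C$.

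The crux is to quantify the $\eps$-blowup of the H\"older semi-norms of these coefficients. Writing $\lambda^{u^\eps}=(\nabla H_\eps)(\bL u^\eps+\bf)$, the scaling identity $\nabla H_\eps(\cdot)=\nabla H(\cdot/\eps)$ combined with the added Lipschitz hypothesis on $\nabla H$ shows that $\nabla H_\eps$ is Lipschitz with constant at most $L\eps^{-1}$. Together with the uniform \textit{a priori} estimate $|u^\eps|_{2,\bar\b_0}\le C$ for $\eps\in(0,1]$ from Proposition \ref{prop:Holder_bdd}, this gives $[\lambda^{u^\eps}]_{\bar\b_0}\le C\eps^{-1}$ while $|\lambda^{u^\eps}|_0\le 1$. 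A standard interpolation between $L^\infty$ and $C^{\bar\b_0}$ then yields, for each $\b\in(0,\bar\b_0]$, the bound $[\lambda^{u^\eps}]_\b\le C\,[\lambda^{u^\eps}]_{\bar\b_0}^{\b/\bar\b_0}|\lambda^{u^\eps}|_0^{1-\b/\bar\b_0}\le C\eps^{-\b/\bar\b_0}$. Propagating through products with $(a_k,b_k,c_k)\in C^\theta(\ol\cO)$ and using the uniform bound on $|u^\eps|_{2,\bar\b_0}$, one obtains $[\tilde a_\eps]_\b+[\tilde b_\eps]_\b+[\tilde c_\eps]_\b\le C\eps^{-\b/\bar\b_0}$, together with $|\tilde f_\eps|_0+|g^{\,\delta\boldsymbol{\vartheta}}|_{2,\b}\le C|\delta\boldsymbol{\vartheta}|_{\Theta^\b}$ and $[\tilde f_\eps]_\b\le C\eps^{-\b/\bar\b_0}|\delta\boldsymbol{\vartheta}|_{\Theta^\b}$.

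With these data and coefficient estimates in hand, Proposition \ref{prop:schauder_eps} applies with $\a=\b/\bar\b_0$ to produce $|\delta u|_{2,\b}\le C\bigl(\eps^{-\a(\b+2)/\b}|\tilde f_\eps|_0+[\tilde f_\eps]_\b+\eps^{-\a(\b+2)/\b}|g^{\,\delta\boldsymbol{\vartheta}}|_{2,\b}\bigr)$. The algebraic identity $\a(\b+2)/\b=(\b+2)/\bar\b_0$, together with the estimates on $\tilde f_\eps$ and $g^{\,\delta\boldsymbol{\vartheta}}$ and the elementary inequality $\eps^{-\b/\bar\b_0}\le\eps^{-(\b+2)/\bar\b_0}$ for $\eps\in(0,1]$, then delivers the claimed bound $|\delta u|_{2,\b}\le C\eps^{-(\b+2)/\bar\b_0}|\delta\boldsymbol{\vartheta}|_{\Theta^\b}$. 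The main technical point is pairing the interpolation exponent $\b/\bar\b_0$ for the H\"older norms of $\lambda^{u^\eps}$ with the choice $\a=\b/\bar\b_0$ in Proposition \ref{prop:schauder_eps}, so that the resulting $\eps$-power exactly matches the claimed $(\b+2)/\bar\b_0$; the remaining verification that the Schauder constant is independent of $\eps\in(0,1]$ reduces to the $\eps$-uniform ellipticity and $C^0$-bounds of the linearised coefficients established in the first step.
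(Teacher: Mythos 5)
Your proof is correct and follows essentially the same route as the paper: uniform bound $|u^\eps|_{2,\bar\b_0}\le C$ from Proposition \ref{prop:Holder_bdd}, the scaling identity $\nabla H_\eps(\cdot)=\nabla H(\eps^{-1}\cdot)$ with the Lipschitz hypothesis to get $[\lambda^{u^\eps}]_{\bar\b_0}\le C\eps^{-1}$, interpolation against $|\lambda^{u^\eps}|_0\le C$ to reach $[\lambda^{u^\eps}]_\b\le C\eps^{-\b/\bar\b_0}$, and finally Proposition \ref{prop:schauder_eps} with $\a=\b/\bar\b_0$. You make the intermediate coefficient estimates $[\tilde a_\eps]_\b,[\tilde b_\eps]_\b,[\tilde c_\eps]_\b\le C\eps^{-\b/\bar\b_0}$ explicit where the paper leaves them implicit in applying the proposition, but this is a presentational difference, not a different argument.
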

\begin{proof}
Throughout this proof, let  $C$ be a generic constant 
depending  possibly on $\boldsymbol{\vartheta}_0$ and $\b$, but
independent of $\eps$ and $\delta  \boldsymbol{\vartheta}$.
Proposition \ref{prop:Holder_bdd}
shows that 
 $|u^\eps|_{2,\bar{\b}_0}\le C$ for all $\eps\in (0,1]$,
which together with \eqref{eq:ctrl_relax}, the fact that $\nabla H_\eps(x)=\nabla H(\eps^{-1}x)$ for all $x\in \R^K$ (see  \eqref{eq:H_eps}) and the Lipschitz continuity of $\nabla H$
implies that $|\lambda^{u^\eps}|_{0}\le C$ and $|\lambda^{u^\eps}|_{\bar{\b}_0}\le C\eps^{-1}$ for all $\eps\in (0,1]$.
Consequently, we have for all $\b\in (0,\bar{\b}_0]$ and $\eps\in (0,1]$ that 
$|\lambda^{u^\eps}|_{\b}\le C|\lambda^{u^\eps}|^{\b/\bar{\b}_0}_{\bar{\b}_0}|\lambda^{u^\eps}|^{(\bar{\b}_0-\b)/\bar{\b}_0}_{0}
\le C\eps^{-\b/\bar{\b}_0}$.

Now let us fix $\b\in (0,\bar{\b}_0]$ and $\delta \boldsymbol{\vartheta}\in \Theta^\b$.
Since $\lambda^{u^\eps}\in \Delta_K$ on $\ol{\cO}$, 
we can apply
Proposition \ref{prop:schauder_eps} (with $\a=\b/\bar{\b}_0$) to \eqref{eq:hjb_sensitivity}
and conclude the   desired estimate  from
the following inequality:
\begin{align*}
|\delta u|_{2,\b}
&\le C\big(
\eps^{- (\b+2)/\bar{\b}_0}
|(\lambda^{u^\eps})^T(\bL^{\,\delta \boldsymbol{\vartheta}}u^\eps
+
\bf^{\,\delta \boldsymbol{\vartheta}})|_{0}
+ 
|(\lambda^{u^\eps})^T(\bL^{\,\delta \boldsymbol{\vartheta}}u^\eps
+
\bf^{\,\delta \boldsymbol{\vartheta}})|_{\b}
+\eps^{- (\b+2)/\bar{\b}_0}|g^{\,\delta \boldsymbol{\vartheta}} |_{2,\b}
\big)\\
&\le C\big(
\eps^{- (\b+2)/\bar{\b}_0}
|\delta\boldsymbol{\vartheta}|_{\Theta^\b}
+ 
|\lambda^{u^\eps}|_\b
|\bL^{\,\delta \boldsymbol{\vartheta}}u^\eps
+
\bf^{\,\delta \boldsymbol{\vartheta}}|_{\b}
\big)
\le C\big(
\eps^{- (\b+2)/\bar{\b}_0}
|\delta\boldsymbol{\vartheta}|_{\Theta^\b}
+ 
\eps^{-\b/\bar{\b}_0}
|\delta\boldsymbol{\vartheta}|_{\Theta^\b}
\big). \qedhere
\end{align*}
\end{proof}

\color{black}

\section{Convergence analysis for vanishing relaxation parameter}\l{sec:convergence}
In this section, we analyze the convergence  of the relaxed control problem \eqref{eq:value_relax}
to the original control problem \eqref{eq:value} as the relaxation parameter tends to zero.
In particular, 
with the help of the HJB equations \eqref{eq:hjb} and \eqref{eq:hjb_relax},
we shall establish  first-order monotone convergence of the value functions, 
and also uniform convergence of the feedback controls (in  regions where a strict complementary condition is satisfied).

%
%

We first study the convergence of the value functions of the relaxed control problems.
The following theorem shows that,  as the relaxation parameter $\eps$ tends to zero,
the value function  \eqref{eq:value_relax} converges monotonically to the  value function \eqref{eq:value} 
in $C^{2,\b}(\ol{\cO})$ 
with  first order.

\begin{Theorem}\l{thm:value_converge}
Suppose  (H.\ref{assum:D}) and (H.\ref{assum:rho}) hold. 
Let 
$\b_0\in (0,1)$ be the  constant  in Proposition \ref{prop:Holder_bdd},
and $u\in C(\ol{\cO}) \cap C^2(\cO)$ (resp.~$u^\eps\in C(\ol{\cO}) \cap C^2(\cO)$) be the solution to 
\eqref{eq:hjb} (resp.~\eqref{eq:hjb_relax} with  parameter $\eps>0$). 
Then we have $u^{\eps_1}\ge u^{\eps_2}$ for all $\eps_1\ge \eps_2> 0$.
Moreover, 
it holds for any  $\b\in (0,\min(\b_0,\theta))$ that 
$(u^\eps)_{\eps> 0}$ converges to $u$ in $C^{2,\b}(\ol{\cO})$ as $\eps\to 0$,
and satisfies the  estimate:
\bb\l{eq:error_u}
0\le u^\eps-u\le \bigg(\exp\left[\left(\tfrac{\max_{k\in \cK}\sum_{i=1}^n|b^i_k|_0}{\nu/2}+1\right)\textnormal{diam}(\cO)\right]-1\bigg)\f{2\eps c_0}{\nu}.
\ee
\end{Theorem}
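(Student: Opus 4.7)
The argument rests on three ingredients: the variational representation $H_\eps(x)=\max_{y\in \Delta_K}(y^Tx-\eps\rho(y))$ with $\rho\le 0$ on $\Delta_K$ from Lemma \ref{lemma:rho_convex}, the linearization trick used in the uniqueness proof of Theorem \ref{thm:wp}, and the uniform \textit{a priori} bound of Proposition \ref{prop:Holder_bdd}. For the monotonicity, observe that for every $y\in \Delta_K$ the map $\eps\mapsto -\eps\rho(y)$ is non-decreasing in $\eps\ge 0$, so that $(H_\eps)_{\eps\ge 0}$ (with $H_0$ the pointwise maximum, consistent with \eqref{eq:H_eps}) is pointwise non-decreasing in $\eps$. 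Hence for $\eps_1\ge \eps_2>0$ one has $H_{\eps_1}(\bL u^{\eps_2}+\bf)\ge H_{\eps_2}(\bL u^{\eps_2}+\bf)=0$, so $u^{\eps_2}$ is a classical subsolution of the $\eps_1$-HJB problem; the linearization-plus-comparison argument from the proof of Theorem \ref{thm:wp} then gives $u^{\eps_2}\le u^{\eps_1}$ on $\ol{\cO}$. The same comparison, applied with $\eps_2=0$ against the solution $u$ of \eqref{eq:hjb} constructed in the last step, yields $u\le u^\eps$, which is the lower inequality in \eqref{eq:error_u}.

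For the quantitative upper bound in \eqref{eq:error_u}, Lemma \ref{lemma:rho_convex}(2) gives $H_\eps\le H_0+\eps c_0$, so $H_0(\bL u^\eps+\bf)\ge -\eps c_0$ in $\cO$. Applying the fundamental theorem of calculus as in the proof of Theorem \ref{thm:wp} to the identity $H_0(\bL u^\eps+\bf)-H_0(\bL u+\bf)=H_0(\bL u^\eps+\bf)$ produces a uniformly elliptic operator $\tilde L=\eta^T\bL$ with $\eta$ valued in $\Delta_K$ and non-positive zero-order coefficient satisfying $\tilde L(u^\eps-u)\ge -\eps c_0$ in $\cO$ and $u^\eps-u=0$ on $\p\cO$. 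I would then invoke a standard exponential barrier: translate so that $\cO\subset \{0\le x_1\le D\}$ with $D=\textnormal{diam}(\cO)$, and set $\psi(x)=e^{\a D}-e^{\a x_1}$ with $\a=(\max_k\sum_i|b_k^i|_0)/(\nu/2)+1$. Using $\sum_k \eta_k a_k^{11}\ge \nu/2$, $|\sum_k \eta_k b_k^1|\le \max_k\sum_i|b_k^i|_0$, $\sum_k\eta_k c_k\ge 0$ and $\psi\ge 0$, a direct computation gives $\tilde L\psi\le -\nu/2$ on $\cO$. The auxiliary function $w\coloneqq u^\eps-u-(2\eps c_0/\nu)\psi$ then satisfies $\tilde L w\ge 0$ in $\cO$ and $w\le 0$ on $\p\cO$, so the classical maximum principle forces $w\le 0$, which is exactly the upper bound in \eqref{eq:error_u}.

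For the $C^{2,\b}$ convergence and the pending existence of $u$, I would combine the uniform bound $|u^\eps|_{2,\min(\b_0,\theta)}\le C$ for $\eps\in (0,1]$ from Proposition \ref{prop:Holder_bdd} with Arzel\`{a}--Ascoli: $(u^\eps)_{\eps\in (0,1]}$ is precompact in $C^{2,\b}(\ol{\cO})$ for every $\b\in (0,\min(\b_0,\theta))$. Any subsequential $C^{2,\b}$-limit $\tilde u$ lies in $C^{2,\min(\b_0,\theta)}(\ol{\cO})$, and passing to the limit in $H_{\eps_n}(\bL u^{\eps_n}+\bf)=0$, using $|H_\eps-H_0|_0\le \eps c_0$ together with the $C^\b$-convergence of $\bL u^{\eps_n}+\bf$, shows that $\tilde u$ solves \eqref{eq:hjb}; this simultaneously supplies the classical solution $u\in C^{2,\min(\b_0,\theta)}(\ol{\cO})$ promised by Theorem \ref{thm:wp}, and by the uniqueness established there every subsequential limit coincides with $u$, so the whole family converges to $u$ in $C^{2,\b}(\ol{\cO})$. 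The main obstacle is the logical ordering rather than any single computation: \eqref{eq:error_u} is stated in terms of $u$, whose existence is itself delivered by the compactness argument applied to $(u^\eps)$, so in practice one must first execute the compactness-and-uniqueness step to construct $u$ and secure $u^\eps\to u$ in $C^0$, and only then deploy the barrier argument to sharpen this into the explicit quantitative rate \eqref{eq:error_u}.
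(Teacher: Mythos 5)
Your proof is correct and follows essentially the same route as the paper: monotonicity in $\eps$ via the pointwise monotonicity of $H_\eps$ plus linearization and the comparison principle; the two-sided estimate \eqref{eq:error_u} from $|H_\eps - H_0|\le \eps c_0$ combined with a maximum-principle bound for the linearized operator; and $C^{2,\b}$ convergence from the uniform \textit{a priori} bound of Proposition \ref{prop:Holder_bdd} together with Arzel\`a--Ascoli and the uniqueness of Theorem \ref{thm:wp}.

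The only substantive differences are cosmetic: the paper linearizes $H_{\eps_2}$ (resp.\ $H_\eps$) around the straight-line segment joining the two solutions and then simply cites \cite[Theorem 3.7]{gilbarg1985}, whereas you linearize $H_0$ for the upper bound and unwind that citation into an explicit exponential barrier $\psi(x)=e^{\a D}-e^{\a x_1}$ with $\a=(\max_k\sum_i|b^i_k|_0)/(\nu/2)+1$ — this is exactly the barrier used in the proof of that theorem and reproduces the displayed constant (the paper's $\max_k\sum_i|b^i_k|_0$ dominates the Euclidean norm $\sup|b|$ in Gilbarg--Trudinger, so the bound is the same or slightly looser, as in the paper). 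You also make the logical ordering explicit: since the existence of $u$ in Theorem \ref{thm:wp} is deferred to this very proof, the compactness step must produce $u$ as a subsequential $C^{2,\b}$-limit of $(u^\eps)$ before \eqref{eq:error_u} can be read as a statement about $u$, and uniqueness then upgrades subsequential to full convergence. The paper is somewhat terse on this point (it invokes the monotone convergence ``to $u$'' before $u$'s existence has been secured), so your framing is in fact cleaner, even though the underlying mathematics is identical.
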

\begin{proof}
Let $(F_\eps)_{\eps\ge 0}$ be defined as in \eqref{eq:hjb} and \eqref{eq:hjb_relax},
and $\eps_1\ge \eps_2>0$ be given constants.
Lemma \ref{lemma:rho_convex} shows that
 $\rho\le 0$ on $\Delta_K$, and
$H_\eps(x)=\max_{y\in \Delta_K}\big(y^Tx-\eps\rho(y)\big)$ for all $x\in \R^K$.
 Hence, we have  $H_{\eps_1}\ge H_{\eps_2}$, and 
\begin{align*}
0=F_{\eps_1}[u^{\eps_1}]-F_{\eps_2}[u^{\eps_2}]\ge F_{\eps_2}[u^{\eps_1}]-F_{\eps_2}[u^{\eps_2}]
=\eta^T\bL(u^{\eps_1}-u^{\eps_2}),
\end{align*}
where we write
$\eta \coloneqq \int_0^1 (\nabla H_{\eps_2})(\bL u^{\eps_2}+\bf+s\bL (u^{\eps_1}-u^{\eps_2}))\,ds$.
Since
$\eta(x)\in \Delta_K$ for all $x\in \cO$,
we can deduce from the 
classical maximum principle (see e.g.~\cite[Theorem 3.7]{gilbarg1985}) that
$\inf_{x\in \ol{\cO}}(u^{\eps_1}-u^{\eps_2})(x)\ge \inf_{x\in \p\cO}(u^{\eps_1}-u^{\eps_2})^-(x)=0$.

Similarly, for any given $\eps>0$, we can obtain from Lemma \ref{lemma:rho_convex}\eqref{item:H_eps} that 
\begin{align*}
0&=F_{\eps}[u^{\eps}]-F_{0}[u]\le  F_{\eps}[u^{\eps}]-(F_{\eps}[u]-\eps c_0)
=\tilde{\eta}^T\bL(u^{\eps}-u)+\eps c_0,\\
0&=F_{\eps}[u^{\eps}]-F_{0}[u]\ge  F_{\eps}[u^{\eps}]-F_{\eps}[u]
=\tilde{\eta}^T\bL(u^{\eps}-u),
\end{align*}
where we have
$\tilde{\eta} \coloneqq \int_0^1 (\nabla H_{\eps})(\bL u+\bf+s\bL (u^{\eps}-u))\,ds$.
By using 
$a_k=\sigma_k(\sigma_k)^T/2$,
\eqref{eq:elliptic} in (H.\ref{assum:D}),
 and {the fact that $\tilde{\eta}\in \Delta_K$ on $ \ol{\cO}$},
we deduce that
 $\sum_{k=1}^K\tilde{\eta}_k c_k\ge 0$
and $\sum_{k=1}^K \tilde{\eta}_k a_k\ge (\nu/2) I_n$.
Hence  the 
classical maximum principle (see e.g.~\cite[Theorem 3.7]{gilbarg1985}) 
and the fact that $u^\eps=u$ on $\p\cO$ give us
the estimate \eqref{eq:error_u}.

Finally, the \textit{a priori} bound in Proposition \ref{prop:Holder_bdd} 
and the
Arzel\`a--Ascoli  theorem
ensure that 
for any given $\b\in (0,\min(\b_0,\theta))$,
there exists a subsequence $(u^{\eps_m})_{m\in \N}$ with $\lim_{m\to \infty}\eps_m=0$, 
such that $(u^{\eps_m})_{m\in \N}$ converges in $C^{2,\b}(\ol{\cO})$ to some function $\bar{u}$
and $\bar{u}\in C^{2,\min(\b_0,\theta)}(\ol{\cO})$.
Since the entire sequence $(u^{\eps})_{\eps>0}$ converges monotonically to $u$, 
we have $u=\bar{u}$ and  $(u^{\eps})_{\eps>0}$ converges  to $u$ in $C^{2,\b}(\ol{\cO})$ for all
$\b\in (0,\min(\b_0,\theta))$.
\end{proof}
\begin{Remark}
The estimate  \eqref{eq:error_u} depends on $\eps,c_0, \nu, b^i_k$ and $\cO$ in a rather intuitive way. Note that, compared with the original control problem \eqref{eq:value},
the relaxed control problem
\eqref{eq:value_relax}  introduces additional randomness for  exploration  to achieve more robust decisions, especially at regions where two or more strategies lead to similar performances based on the given model
(the points at which $\argmax$ in \eqref{eq:ctrl} is not a singleton).
The relation \eqref{eq:ctrl} between feedback controls and the derivatives of value functions further suggests that such regions usually correspond to a sign change of derivatives of value functions.

The exploration surplus in the value functions clearly increases as $\eps$ or $c_0$ increase (see Lemma \ref{lemma:rho_convex}\eqref{item:rho} and Figure \ref{fig:comparision_reward}), since the same level of exploration will bring more rewards. 
It will also increase with $\textnormal{diam}(\cO)$ as the dynamics will stay in $\cO$ longer.
Furthermore, due to the lack of regularization from the Laplacian operator,
a small volatility  or a large drift-to-volalitly ratio of the underlying model usually leads to 
a more rapidly changing value function, which
increases the occurrence of the uncertain regions
and
 makes the relaxation approach more beneficial.
\end{Remark}

Now we turn to investigate the convergence of the feedback relaxed control \eqref{eq:ctrl_relax}.
To distinguish different convergence behaviours related to reward functions generated by $H_{\textnormal{en}}$ and $H_{\textnormal{zang}}$,
we first introduce the following concept for functions which only modify the pointwise maximum function locally near the kinks.

\begin{Definition}\l{def:S_loc}
Let $n\in \N$,  we say a function $\phi:\R^n\to \R$ satisfies ($S_{\textnormal{loc}}$) with constant $\vartheta\ge 0$, if 
it holds
for all $k= 1,\ldots, n$ and 
$x\in \R^n$ with $x_k\ge x_j+\vartheta$, $\fa j\not=k$,
that $\phi(x)=x_k$.
\end{Definition}

It is clear that the pointwise maximum function  on $\R^n$ satisfies ($S_{\textnormal{loc}}$) with $\vartheta=0$,
and the two-dimensional function $H_{\textnormal{zang}}$ defined in \eqref{eq:H_zang} 
satisfies ($S_{\textnormal{loc}}$) with $\vartheta=1/2$.
The following lemma shows that  property ($S_{\textnormal{loc}}$)  is preserved under function composition and scaling,
which consequently implies that the recursively constructed $K$-dimensional 
$H_{\textnormal{zang}}$ and its corresponding scaled function $(H_{\textnormal{zang}})_\eps$ (cf.~\eqref{eq:H_eps}) satisfy
($S_{\textnormal{loc}}$).
The proof follows directly from  Definition \ref{def:S_loc}, and is included in Appendix \ref{appendix:lemmas}.

\begin{Lemma}\l{lemma:S_loc} 
\bn[(1)]
\item \l{item:composition}
For each  $n\in \N$, let  $H^{(n)}_0:\R^n\to \R$ be the $n$-dimensional pointwise maximum function (see \eqref{eq:H_eps}). 
Let $n_1=2$, $n_2,n_3\in \N$, 
$(\vartheta_i,c_i)_{i=1}^3\subset [0,\infty)$,
 $\phi_i:\R^{n_i}\to \R$, $i=1,2,3$,   be given functions,
and 
$\phi:\R^{n_2+n_3}\to \R$ be the function satisfying 
for all $x=(x_1,\ldots, x_{n_2+n_3})^T\in \R^{n_2+n_3}$ that
$\phi(x)= \phi_1(\phi_2(x^{(1)}),\phi_3(x^{(2)}))$
with $x^{(1)}=(x_1,\ldots, x_{n_2})$ and $x^{(2)}=(x_{n_2+1},\ldots, x_{n_2+n_3})$.
Suppose that for each $i=1,2,3$, 
the function $\phi_i$ satisfies 
 ($S_{\textnormal{loc}}$) with constant $\vartheta_i$, and $\phi_i(x)\le H^{(n_i)}_0(x)+c_i$ for all $x\in \R^{n_i}$.
 Then 
  the function $\phi$ satisfies ($S_{\textnormal{loc}}$) with  constant $\max(\vartheta_2,\vartheta_3,c_2+\vartheta_1,c_3+\vartheta_1)$, and it holds for all $x\in \R^{n_2+n_3}$ that 
  $\phi(x)\le H^{(n_2+n_3)}_0(x)+c_1+\max(c_2,c_3)$.
\item\l{item:scaling}
 If $\phi:\R^n\to \R $ satisfies ($S_{\textnormal{loc}}$) with constant $\vartheta\ge 0$, then for each $\eps> 0$, the scaled function  $\phi_\eps:x\in \R^n\mapsto \eps \phi(\eps^{-1}x)\in \R$ satisfies ($S_{\textnormal{loc}}$) with constant $\eps\vartheta$.
  \en
\end{Lemma}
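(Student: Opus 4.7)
The plan is to verify both parts by unpacking the definition of $(S_{\textnormal{loc}})$ directly; I do not expect any genuine obstacle since the claims reduce to elementary real-variable bookkeeping, but care is needed with the constants in part \eqref{item:composition}.

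I would first dispatch part \eqref{item:scaling}, which is a one-line change of variables. If $x\in\R^n$ satisfies $x_k\ge x_j+\eps\vartheta$ for every $j\ne k$, then $y:=\eps^{-1}x$ satisfies $y_k\ge y_j+\vartheta$, so $\phi(y)=y_k$ by hypothesis, whence $\phi_\eps(x)=\eps\phi(y)=x_k$.

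For part \eqref{item:composition}, I would first establish the pointwise upper bound by chaining $\phi_i\le H_0^{(n_i)}+c_i$: for any $x\in\R^{n_2+n_3}$,
\begin{equation*}
\phi(x)\le \max\big(\phi_2(x^{(1)}),\phi_3(x^{(2)})\big)+c_1\le \max\big(H_0^{(n_2)}(x^{(1)})+c_2,\,H_0^{(n_3)}(x^{(2)})+c_3\big)+c_1\le H_0^{(n_2+n_3)}(x)+c_1+\max(c_2,c_3).
\end{equation*}
To verify $(S_{\textnormal{loc}})$ with constant $\vartheta^*:=\max(\vartheta_2,\vartheta_3,c_2+\vartheta_1,c_3+\vartheta_1)$, I would fix $x$ and an index $k$ with $x_k\ge x_j+\vartheta^*$ for all $j\ne k$, and treat the two cases $k\in\{1,\dots,n_2\}$ and $k\in\{n_2+1,\dots,n_2+n_3\}$ symmetrically. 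In the first case, the inequality $\vartheta^*\ge\vartheta_2$ and $(S_{\textnormal{loc}})$ for $\phi_2$ yield $\phi_2(x^{(1)})=x_k$, while $\vartheta^*\ge c_3+\vartheta_1$ together with the upper bound on $\phi_3$ gives
\begin{equation*}
\phi_3(x^{(2)})\le \max_{1\le j\le n_3} x^{(2)}_j+c_3\le x_k-\vartheta^*+c_3\le x_k-\vartheta_1,
\end{equation*}
so the pair $(\phi_2(x^{(1)}),\phi_3(x^{(2)}))=(x_k,\phi_3(x^{(2)}))$ has first coordinate at least $\vartheta_1$ above the second; invoking $(S_{\textnormal{loc}})$ for $\phi_1$ then gives $\phi(x)=x_k$, as required.

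The only real subtlety is the combinatorial selection of $\vartheta^*$: the terms $\vartheta_2,\vartheta_3$ are needed to trigger the inner $(S_{\textnormal{loc}})$, while $c_2+\vartheta_1$ and $c_3+\vartheta_1$ are needed to absorb the upper-bound slack on the ``other'' inner function so that the outer $\phi_1$ satisfies its own $(S_{\textnormal{loc}})$ separation condition. Taking the maximum of all four lets both the inner and outer applications of $(S_{\textnormal{loc}})$ fire simultaneously in each of the two symmetric cases.
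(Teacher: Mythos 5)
Your proof is correct and follows essentially the same route as the paper's: the upper bound is obtained by chaining the pointwise bounds $\phi_i\le H_0^{(n_i)}+c_i$ through the monotone, subadditive max function, and the $(S_{\textnormal{loc}})$ verification splits on whether the dominant index lies in the first or second block, using $\vartheta^*\ge\vartheta_2$ (resp.\ $\vartheta_3$) to trigger the inner $(S_{\textnormal{loc}})$ and $\vartheta^*\ge c_3+\vartheta_1$ (resp.\ $c_2+\vartheta_1$) to create the separation needed for the outer one. Part (2) is the same one-line scaling argument.
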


The following proposition presents several important convergence properties of the functions $(\nabla H_{\eps})_{\eps> 0}$.
In the sequel, 
we shall denote by
$e_k\in \R^K$, $k\in \cK$,    the unit vector from the
$k$-th column of the identify matrix $I_K$,
and by $\textnormal{conv}(S)$ the convex hull of a given set $S\subset \R^K$.

\begin{Proposition}\l{prop:grad_H_conv}
Suppose (H.\ref{assum:rho}) holds. Let $(H_\eps)_{\eps\ge 0}$ be defined as in \eqref{eq:H_eps},
$(\p H_0)(x)=\textnormal{conv}(\{e_k\in \R^K\mid x_k=H_0(x), k\in \cK\})$ for all $x\in \R^K$,
and $U=\{x\in \R^K\mid \textnormal{$(\p H_0)(x)$ is a singleton}\}$. Then it holds for all 
$x\in \R^K$  and  compact subset $\cC \subset U$ that 
\bn[(1)]
\item \l{item:grad_H_ptr}
$\lim_{k\to \infty}\textnormal{dist}((\nabla H_{\eps_k})(x_k), (\p H_0)(x))=0$ provided that 
$\lim_{k\to \infty}x_k=x$ and $\lim_{k\to \infty}\eps_k=0^+$, 
\item \l{item:grad_H_uniform}
$(\nabla H_\eps)_{\eps>0}$ converges uniformly to $\p H_0$ on $\cC$ as $\eps\to 0$. 
If we further suppose the function $H:\R^K\to \R$ in (H.\ref{assum:rho}) satisfies ($S_{\textnormal{loc}}$) with constant $\vartheta\ge 0$, 
then there exists $\eps_0>0$ such that $(\nabla H_\eps)(x)=(\p H_0)(x)$ for all $x\in \cC$ and $\eps\in (0,\eps_0]$.
\en
\end{Proposition}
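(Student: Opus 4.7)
\textbf{Proof plan for Proposition \ref{prop:grad_H_conv}.}

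For part \eqref{item:grad_H_ptr}, I would exploit the variational characterization $(\nabla H_\eps)(x) = \argmax_{y\in\Delta_K}(y^Tx - \eps\rho(y))$ from Lemma \ref{lemma:rho_convex}\eqref{item:H_eps}, together with the fact that $\rho\in[-c_0,0]$ on $\Delta_K$ (same lemma, item \eqref{item:rho}), which guarantees $\eps_k\rho(y_k)\to 0$ uniformly in $y_k\in\Delta_K$. Given sequences $x_k\to x$, $\eps_k\to 0^+$, set $y_k\coloneqq (\nabla H_{\eps_k})(x_k)\in\Delta_K$. By compactness of $\Delta_K$, it suffices to show that every subsequential limit $y^*=\lim_j y_{k_j}$ lies in $(\p H_0)(x)$. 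The optimality of $y_k$ yields $y_k^Tx_k-\eps_k\rho(y_k)\ge z^Tx_k-\eps_k\rho(z)$ for every $z\in\Delta_K$; passing to the limit along the subsequence gives $(y^*)^Tx\ge z^Tx$ for all $z\in\Delta_K$, hence $(y^*)^Tx=H_0(x)=\max_j x_j$. Writing $y^*=\sum_k y^*_k e_k$, this forces $y^*_k=0$ whenever $x_k<H_0(x)$, so $y^*\in\textnormal{conv}(\{e_k\mid x_k=H_0(x)\})=(\p H_0)(x)$.

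For the uniform convergence in \eqref{item:grad_H_uniform}, I would argue by contradiction: if not, there exist $\delta>0$, $x_k\in\cC$ and $\eps_k\to 0$ with $\textnormal{dist}((\nabla H_{\eps_k})(x_k),(\p H_0)(x_k))\ge\delta$. Extract a convergent subsequence $x_{k_j}\to x^*\in\cC\subset U$; since $(\p H_0)(x^*)$ is the singleton $\{e_{k^*}\}$ for the (unique) maximizing index $k^*$ of $x^*$, and $x\mapsto x_{(1)}-x_{(2)}$ (difference of the largest and second-largest coordinates) is continuous with value $>0$ at $x^*$, on a neighborhood of $x^*$ the map $\p H_0$ is identically equal to $\{e_{k^*}\}$. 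In particular $(\p H_0)(x_{k_j})=(\p H_0)(x^*)$ for $j$ large, and then part \eqref{item:grad_H_ptr} contradicts the lower bound $\delta$.

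For the second assertion of \eqref{item:grad_H_uniform}, I would combine Lemma \ref{lemma:S_loc}\eqref{item:scaling}, which propagates $(S_{\textnormal{loc}})$ from $H$ to $H_\eps$ with constant $\eps\vartheta$, with a compactness argument on $\cC$. Define the gap $\delta:\R^K\to[0,\infty)$ by $\delta(x)\coloneqq x_{(1)}-x_{(2)}$ (continuous on $\R^K$ and strictly positive precisely on $U$); on the compact set $\cC\subset U$ it attains a positive minimum $\delta_0>0$. If $\vartheta=0$ the claim is immediate since $H_\eps=H_0$ locally near any point of $U$; if $\vartheta>0$, choose $\eps_0\coloneqq \delta_0/(2\vartheta)$. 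Then for any $x\in\cC$ and $\eps\in(0,\eps_0]$, continuity of the coordinates yields a neighborhood of $x$ on which $y_{k^*(x)}\ge y_j+\eps\vartheta$ for every $j\ne k^*(x)$; on this neighborhood $(S_{\textnormal{loc}})$ for $H_\eps$ forces $H_\eps(y)=y_{k^*(x)}$, so $(\nabla H_\eps)(x)=e_{k^*(x)}=(\p H_0)(x)$.

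The genuinely delicate step is the limit argument in part \eqref{item:grad_H_ptr}: the subgradients $(\nabla H_\eps)(x_\eps)$ are only bounded, not a priori convergent, so the proof must be set up as a subsequential/cluster-point argument and then combined with the set-valued upper semicontinuity of $\p H_0$. Everything else reduces to continuity of coordinate-order functions on $\R^K$ and the scaling lemma for $(S_{\textnormal{loc}})$.
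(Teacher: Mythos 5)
Your proof is correct, and it takes a genuinely different (and more elementary) route than the paper's in two places. For part~\eqref{item:grad_H_ptr}, the paper invokes the Berge maximum theorem (upper hemicontinuity of the argmax correspondence for the continuous function $(x,\eps,y)\mapsto y^Tx-\eps\rho(y)$ on $\R^K\t[0,1]\t\Delta_K$) and then identifies the limiting argmax set with $\p H_0(x)$ via a result of Poliquin--Rockafellar; your direct cluster-point argument — extract a convergent subsequence $y_{k_j}\to y^*\in\Delta_K$, pass to the limit in the optimality inequality using $\rho\in[-c_0,0]$ on $\Delta_K$, and read off $y^*\in\p H_0(x)$ — proves the same thing from first principles and avoids both citations. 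For the uniform convergence in~\eqref{item:grad_H_uniform}, the paper partitions $\cC$ into the compacta $\cC\cap U_k$ and appeals to Rockafellar's Theorem 25.7 (uniform convergence of gradients of pointwise-convergent convex functions) on each piece; you instead run a contradiction argument combining part~\eqref{item:grad_H_ptr}, sequential compactness of $\cC$, and the local constancy of $\p H_0$ on $U$, which again sidesteps a nontrivial cited theorem at the cost of a slightly less direct structure. The $(S_{\textnormal{loc}})$ argument is essentially the paper's (compactness gives a uniform gap on $\cC$, then the scaling in Lemma~\ref{lemma:S_loc}\eqref{item:scaling} kicks in for $\eps$ small enough), and your choice $\eps_0=\delta_0/(2\vartheta)$ correctly leaves slack so that $H_\eps(y)=y_{k^*(x)}$ holds on a full neighborhood of each $x\in\cC$ and not merely at $x$ — the step needed to conclude $(\nabla H_\eps)(x)=e_{k^*(x)}$. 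In short: same theorem, a more self-contained proof; the paper's version is shorter on the page but leans more heavily on quoted convex-analytic machinery.
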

\begin{proof}
We first establish Property \eqref{item:grad_H_ptr} by considering the following function: 
$$
\phi:(x, \eps, y)\in \R^K\t [0,1]\t \Delta_K\mapsto y^Tx-\eps\rho(y)\in \R.
$$
Note that Lemma \ref{lemma:rho_convex}\eqref{item:rho}  shows that the restriction of  $\rho$ on $\Delta_K$ is continuous,
which subsequently implies that  $\phi$ is a continuous function. Then we can deduce from \cite[Theorem 17.31]{aliprantis2006} 
that the set-valued mapping 
$
\Xi:(x,\eps)\in  \R^K\t [0,1]\rightrightarrows \argmax_{y\in \Delta_K}\phi(x,\eps,y)\subset \Delta_K
$ is upper hemicontinuous,  which along with the fact that $\Xi(x,\eps)=(\nabla H_{\eps})(x)$  for all $(x,\eps)\in  \R^K\t (0,1]$ (see  Lemma \ref{lemma:rho_convex}\eqref{item:H_eps}) 
enables us to deduce 
$\lim_{k\to \infty}\textnormal{dist}((\nabla H_{\eps_k})(x_k), \Xi(x,0))=0$
for any given $\lim_{k\to \infty}x_k=x$ and $\lim_{k\to \infty}\eps_k=0^+$.
Property \eqref{item:grad_H_ptr} now follows from the fact that
$\Xi(x,0)=(\p H_0)(x)$
(see e.g.~\cite[Theorem 2]{poliquin1994}).

Now we shall prove Property \eqref{item:grad_H_uniform}. We first define the set 
$U_k=\{x\in \R^K\mid x_k>x_j, \fa j\not=k\}$ for  each $k\in \cK$. It is clear that 
$(U_k)_{k\in \cK}$ are disjoint open convex sets, $U=\cup_{k\in \cK}U_k$, and 
it holds for all  $k\in \cK$ and $x\in U_k$ that $H_0$ is differentiable at $x$ with $(\nabla H_0)(x)=e_k=(\p H_0)(x)$.

Let $\cC\subset U$ be a compact set, then 
 we have
$\cC=\cup_{k\in \cK}(\cC\cap U_k)$
due to $U=\cup_{k\in \cK}U_k$. 
Let us fix an arbitrary index  $k\in \cK$. By using the fact that $(U_k)_{k\in \cK}$ are disjoint open sets, we can deduce that 
$\cC\cap U_k$ is also compact. Since  $(H_\eps)_{\eps\ge 0}$ are convex and differentiable on $U_k$ and $\lim_{\eps\to 0} H_\eps(x)=H_0(x)$ for all $x\in U_k$, we can deduce from the convexity of $U_k$ and \cite[Theorem 25.7]{rockafellar1970} that $(\nabla H_\eps)_{\eps>0}$ converges uniformly to $\nabla H_0=\p H_0$ on $\cC\cap U_k$. Since $\cK$ is a finite set, we have shown the desired uniform convergence on $\cC$.

Moreover, for each $k\in \cK$, the compactness of $\cC\cap U_k$ implies that there exists $\eps_{0,k}>0$ such that 
$\cC\cap U_k\subset \{x\in \R^K\mid x_k>x_j+\eps_{0,k}, \fa j\not=k\}$. Then, if $H$ satisfies ($S_{\textnormal{loc}}$) with constant $\vartheta\ge 0$, then Lemma \ref{lemma:S_loc}\eqref{item:scaling} shows that 
for all $\eps>0$ satisfying $\eps \vartheta\le \eps_{0,k}$, we have $H_\eps= H_0$ (and hence $\nabla H_\eps= \nabla H_0$) on $\cC\cap U_k$. 
Hence, by setting $\eps_0>0$ to be a constant satisfying $\eps_0\vartheta\le \min_{k\in \cK}\eps_{0,k}$,  we can conclude  for all $\eps\in (0,\eps_0]$ that $\nabla H_\eps= \nabla H_0=\p H_0$ on $\cC$.
\end{proof}

Now we are ready to present the convergence  of the feedback relaxed control \eqref{eq:ctrl_relax}. 
Note that 
the H\"{o}lder continuity of the relaxed controls \eqref{eq:ctrl_relax}
and  the possible discontinuity of the feedback control  \eqref{eq:ctrl}  suggest that
the sequence $(\lambda^{u^\eps})_{\eps>0}$ in general  does not converge uniformly to $\a^u$ on ${\cO}$ as $\eps\to 0$.
Thus 
we shall show that the relaxed controls converge  in terms of 
the Hausdorff metric everywhere in $\cO$, and converge uniformly on compact subsets of the following region:
\bb\l{eq:O_st}
\cO_{{\textnormal{st}}}=\bigg\{x\in \cO
\biggm\vert  
\textnormal{
$
\argmax_{k\in \cK}
\big(
\cL_ku(x)+f_k(x) 
\big)$ is a singleton}\bigg\},
\ee 
where $u\in C(\ol{\cO}) \cap C^2(\cO)$ is the solution to \eqref{eq:hjb} (or equivalently the value function \eqref{eq:value}
if the function $\sigma\in \bS^n_0$;
see Theorem \ref{thm:verification}), and $(\cL_k)_{k\in \cK}$ are the elliptic operators defined as in \eqref{eq:L_k}.
Note that $\cO_{{\textnormal{st}}}$ contains the points at which a strict complementary condition is satisfied, i.e., the optimal feedback control strategy of \eqref{eq:value} is uniquely determined.

\begin{Theorem}\l{thm:ctrl_converge}
Suppose  (H.\ref{assum:D}) and (H.\ref{assum:rho}) hold. 
Let
$(\lambda^{u^\eps})_{\eps>0}$ be the functions defined as in \eqref{eq:ctrl_relax} for each $\eps>0$, 
 $u\in C(\ol{\cO}) \cap C^2(\cO)$ be the solution to \eqref{eq:hjb},
and $\cO_{{\textnormal{st}}}$ be the set  defined as in \eqref{eq:O_st}.
Then we have for all $x\in \cO$
and $(x_\eps)_{\eps>0}\subset \cO$ with 
$\lim_{\eps\to 0}x_\eps=x$
 that 
\bb\l{eq:ctrl_conv_ptr}
\lim_{\eps\to 0}\textnormal{dist}\bigg(\lambda^{u^\eps}(x_\eps), 
\textnormal{conv}\bigg(
\bigg\{e_k\in \R^K\biggm\vert 
k\in \argmax_{k\in \cK}
\big(
\cL_ku(x)+f_k(x) 
\big)\bigg\}
\bigg)\bigg)=0.
\ee
Moreover, it holds for all  compact subset $\cC \subset \cO_{\textnormal{st}}$ that
$(\lambda^{u^\eps})_{\eps>0}$ converges uniformly to 
the function $\lambda^*:x\in\cO_{{\textnormal{st}}} \to e_{\kappa^u(x)}\in \Delta_K$
on $\cC$ as $\eps\to 0$,
where $\kappa^u(x)= \argmax_{k\in \cK}
\big(
\cL_ku(x)+f_k(x) 
\big)$ for all $x\in \cO_{{\textnormal{st}}}$.
If we further suppose the function $H:\R^K\to \R$ in (H.\ref{assum:rho})  satisfies ($S_{\textnormal{loc}}$) with constant $\vartheta> 0$, 
then there exists $\eps_0>0$ such that 
it holds for all $\eps\in (0,\eps_0]$ that 
$\lambda^{u^\eps}\equiv \lambda^*$ on $\cC$.

\end{Theorem}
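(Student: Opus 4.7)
The plan is to combine the $C^{2,\b}$-convergence of the value functions $u^\eps \to u$ from Theorem \ref{thm:value_converge} with the convergence of the smoothed gradient maps from Proposition \ref{prop:grad_H_conv}, applied to the Hamiltonian argument $\bL u^\eps + \bf$. Throughout, I would fix $\b\in(0,\min(\b_0,\theta))$, set $y^\eps(x) \coloneqq \bL u^\eps(x)+\bf(x)$ and $y(x)\coloneqq \bL u(x)+\bf(x)$, and exploit the identity $\lambda^{u^\eps}(x)=(\nabla H_\eps)(y^\eps(x))$ from \eqref{eq:ctrl_relax}. Theorem \ref{thm:value_converge} gives $u^\eps\to u$ in $C^{2,\b}(\ol\cO)$, so $y^\eps\to y$ uniformly on $\ol\cO$ (indeed in $C^\b$).

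For the pointwise statement \eqref{eq:ctrl_conv_ptr}, I would fix $x\in\cO$ and any sequence $\eps_m\to 0^+$. Then $y^{\eps_m}(x)\to y(x)$ in $\R^K$, so Proposition \ref{prop:grad_H_conv}\eqref{item:grad_H_ptr} applied with $x_m=y^{\eps_m}(x)$ gives
$$\textnormal{dist}\big(\lambda^{u^{\eps_m}}(x),(\p H_0)(y(x))\big)=\textnormal{dist}\big((\nabla H_{\eps_m})(y^{\eps_m}(x)),(\p H_0)(y(x))\big)\to 0.$$
It remains to identify $(\p H_0)(y(x))$ with the convex hull in \eqref{eq:ctrl_conv_ptr}, which is immediate from the definition of $\p H_0$ in Proposition \ref{prop:grad_H_conv} since $H_0(z)=\max_k z_k$ and $\argmax_k y_k(x)=\argmax_k(\cL_ku(x)+f_k(x))$.

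For the uniform convergence on a compact $\cC\subset\cO_{\textnormal{st}}$, I would first observe that $y(\cC)$ is a compact subset of the open set $U$ appearing in Proposition \ref{prop:grad_H_conv}, because $\cC\subset\cO_{\textnormal{st}}$ means $(\p H_0)(y(x))$ is a singleton for each $x\in\cC$. Since the connected components $U_k=\{z: z_k>z_j\ \forall j\ne k\}$ are open and $y(\cC)\subset\cup_k U_k$, a compactness argument gives a $\delta>0$ such that the $\delta$-neighbourhood $K'$ of $y(\cC)$ is still contained in $\cup_k U_k\subset U$ and each point of $y(\cC)$ remains in the same component $U_k$ as its $\delta$-neighbours. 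By the uniform convergence $y^\eps\to y$ on $\cC$, there exists $\eps_1>0$ with $y^\eps(\cC)\subset K'$ for all $\eps\le\eps_1$, and moreover $y^\eps(x)$ lies in the same $U_k$ as $y(x)$ for each $x\in\cC$. Then estimating
$$|\lambda^{u^\eps}(x)-\lambda^*(x)|\le|(\nabla H_\eps)(y^\eps(x))-(\p H_0)(y^\eps(x))|+|(\p H_0)(y^\eps(x))-(\p H_0)(y(x))|,$$
the first term is bounded by $\sup_{z\in K'}|(\nabla H_\eps)(z)-(\p H_0)(z)|\to 0$ by Proposition \ref{prop:grad_H_conv}\eqref{item:grad_H_uniform}, while the second term vanishes since $y^\eps(x)$ and $y(x)$ lie in the same $U_k$ on which $\p H_0\equiv e_k$ is constant. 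Finally, for the exact regularization, if $H$ satisfies $(S_{\textnormal{loc}})$ with $\vartheta>0$, the second part of Proposition \ref{prop:grad_H_conv}\eqref{item:grad_H_uniform} yields $\eps_0\in(0,\eps_1]$ with $\nabla H_\eps\equiv\p H_0$ on $K'$, so $\lambda^{u^\eps}(x)=(\p H_0)(y^\eps(x))=(\p H_0)(y(x))=\lambda^*(x)$ on $\cC$ for all $\eps\in(0,\eps_0]$.

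The main obstacle is coupling the two simultaneous limits: the argument fed into $\nabla H_\eps$ is itself $\eps$-dependent, and $\nabla H_\eps$ need not be equicontinuous as $\eps\to 0$ (its Lipschitz constant may blow up). The crux is therefore to localise in a compact subset of the open ``stability'' set $U$ via the connectedness of the components $U_k$, and to use uniform convergence $u^\eps\to u$ in $C^2$ to guarantee that $y^\eps(x)$ does not cross between components for small $\eps$; once inside a fixed component, $\p H_0$ is locally constant and the uniform/exact convergence of $\nabla H_\eps\to\p H_0$ from Proposition \ref{prop:grad_H_conv} closes the argument.
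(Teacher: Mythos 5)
Your proof is correct and follows essentially the same route as the paper: $u^\eps\to u$ in $C^{2,\beta}(\ol\cO)$ gives $y^\eps\to y$ uniformly, and then Proposition \ref{prop:grad_H_conv} together with the observation that $\partial H_0$ is locally constant ($\equiv e_k$) on the cells $U_k$ yields both the pointwise and the uniform convergence; the paper merely reorganizes the same compactness argument cell by cell via explicit compacta $G_k\subset U_k$ and notes the two $\partial H_0$ terms in your triangle inequality are actually equal. The only small repair needed: Proposition \ref{prop:grad_H_conv}\eqref{item:grad_H_uniform} is stated for compact subsets of $U$, so $K'$ should be a \emph{closed} $\delta$-neighbourhood of $y(\cC)$ (still contained in $\cup_k U_k$ for $\delta$ small, by the same compactness argument), rather than the open $\delta$-neighbourhood.
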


\begin{proof}
For any give $\eps>0$, let 
$u^\eps\in C(\ol{\cO}) \cap C^2(\cO)$ be the solution to 
\eqref{eq:hjb_relax}.
We first prove \eqref{eq:ctrl_conv_ptr} by fixing an arbitrary point $x\in \cO$. 
By using 
 \eqref{eq:ctrl_relax} and 
Proposition \ref{prop:grad_H_conv}\eqref{item:grad_H_ptr}, we see it suffices to show 
$\lim_{\eps\to 0} (\bL u^\eps(x_\eps)+\bf(x_\eps))=\bL u(x)+\bf(x)$, where $\bL$, $\bf$ are defined as  those in \eqref{eq:hjb}.
Then the fact that 
$(u^\eps)_{\eps>0}$ converges to $u$ uniformly in $C^2(\ol{\cO})$ (see Theorem \ref{thm:value_converge})
and the continuity of coefficients
enable us to conclude \eqref{eq:ctrl_conv_ptr}.

We now proceed to demonstrate the uniform convergence of $(\lambda^{u^\eps})_{\eps>0}$ in $\cO_{{\textnormal{st}}}$. 
Note that for all $x\in \cO_{{\textnormal{st}}}$, we have $e_{\kappa^u(x)}=(\p H_0)\big(\bL u(x)+\bf(x)\big)$,
where the set-valued mapping $\p H_0:\R^K\rightrightarrows \Delta_K$ is defined as in Proposition \ref{prop:grad_H_conv}.
We further define for any given $k\in \cK$ the set 
$$\cO_{\textnormal{st},k}=\{x\in \cO\mid \cL_ku(x)+f_k(x)>\cL_ju(x)+f_j(x), \,\fa j\not=k\},$$
where   $u\in C(\ol{\cO}) \cap C^2(\cO)$ is the solution  to \eqref{eq:hjb},
and $(\cL_k)_{k\in \cK}$ are the elliptic operators defined as in \eqref{eq:L_k}.
The continuity of the coefficients in $(\cL_k)_{k\in \cK}$ (see (H.\ref{assum:D})) implies that $(\cO_{\textnormal{st},k})_{k\in \cK}$ are disjoint open sets satisfying $\cO_{\textnormal{st}}=\cup_{k\in \cK}\cO_{\textnormal{st},k}$.

Now let $\cC\subset \cO_{{\textnormal{st}}}\subseteq \cO$ be a given compact subset. Then we have $\cC=\cup_{k\in \cK}( \cC\cap \cO_{\textnormal{st},k})$, and $\cC\cap \cO_{\textnormal{st},k}$ is a compact set for each $k\in \cK$.
Let $k\in \cK$ be a fixed index. 
Then the continuity of the coefficients in $(\cL_k)_{k\in \cK}$, the fact that $u\in C^2(\ol{\cO})$,
and  the compactness of $\cC\cap \cO_{\textnormal{st},k}$
imply that, 
there exist constants $C_1,C_2\in (0,\infty)$ such that 
we have for all $x\in  \cC\cap \cO_{\textnormal{st},k}$ and $j\in \cK$ that,
$|\cL_ju(x)+f_j(x)|\le C_2$ and 
$$
0<C_1\le \cL_ku(x)+f_k(x)-\max_{j\not=k}\big(\cL_ju(x)+f_j(x)\big)\le C_2<\infty.
$$
Now by using the fact that 
$(u^\eps)_{\eps>0}$ converges to $u$ uniformly in $C^2(\ol{\cO})$,
we can deduce that there exist $\eps_0,C_1,C_2>0$ such that the same estimates hold for
all  $(u^\eps)_{\eps\in(0,\eps_0]}$.
In other words, 
let $U$ be the set    defined as in Proposition  \ref{prop:grad_H_conv},  
we can introduce the compact set
$$
G_k\coloneqq \{x\in \R^K\mid 0<C_1\le x_k-\max_{j\not =k} x_j\le C_2,\, |x_j|\le C_2, \fa j\in \cK \}\subset U,
$$
and
conclude  for all $\eps\in (0,\eps_0]$, $x\in \cC\cap \cO_{\textnormal{st},k}$ that 
$\bL u^\eps(x)+\bf(x)\in G_k$
and $\bL u(x)+\bf(x)\in G_k$.

For any given $\eta>0$, the uniform convergence of $(\nabla H_\eps)_{\eps>0}$ to $\p H_0$ on $G_k$ 
(see Proposition  \ref{prop:grad_H_conv}\eqref{item:grad_H_uniform})
ensures that 
there exists $\delta_k>0$, such that we have for all $y\in G_k$ and $\eps<\delta_k$ that 
$|(\nabla H_\eps)(y)-(\p H_0)(y)|\le \eta$. Hence,
by using the fact that $\p H_0=\{e_k\}$ on $G_k$,
 we have
 for all $\eps<\min(\delta_k,\eps_0)$ and $x\in \cC\cap \cO_{\textnormal{st},k}$ that 
\begin{align*}
&|\lambda^{u^\eps}(x)-\lam^*(x)|=|(\nabla H_\eps)\big(\bL u^\eps(x)+\bf(x)\big)-(\p H_0)\big(\bL u(x)+\bf(x)\big)|\\
&= |(\nabla H_\eps)\big(\bL u^\eps(x)+\bf(x)\big)-(\p H_0)\big(\bL u^\eps(x)+\bf(x)\big)|\le \eta,
\end{align*}
which shows the uniform convergence of $(\lambda^{u^\eps})_{\eps>0}$ to  $\lambda^*$ on $\cC\cap \cO_{\textnormal{st},k}$. Since $\cC=\cup_{k\in \cK}( \cC\cap \cO_{\textnormal{st},k})$ and $\cK$ is a finite set,
we can conclude the desired uniform convergence on $\cC$.

Finally, if we further suppose $H$ satisfies ($S_{\textnormal{loc}}$) with constant $\vartheta\ge 0$, 
Proposition  \ref{prop:grad_H_conv}\eqref{item:grad_H_uniform} ensures that 
$\nabla H_\eps\equiv \p H_0$ on $G_k$ for all small enough $\eps>0$,
which leads to the fact that 
$\lambda^{u^\eps}\equiv \lam^*$ for all small enough $\eps>0$ on $\cC$ and finishes our proof.
\end{proof}

\begin{Remark}\l{rmk:exact_regularization}
One can identify the unit vector $e_k\in \Delta_K$, $k\in \cK$, as the Dirac measure supported on $\{\ba_k\}$,
which shows that, as the relaxation parameter tends to zero, the agent of the relaxed control problem will emphasize more
on exploitation, and the  relaxed control distribution  will collapse to a pure exploitation strategy for the classical control problem. 

Note that Theorem \ref{thm:ctrl_converge} demonstrates an exact regularization feature of the reward function $\rho_{{\textnormal{zang}}}$ generated by $H_{\textnormal{zang}}$,
which means that we can recover the original control strategy in the region $ \cO_{\textnormal{st}}$  based on the feedback relaxed control \textit{without} sending the relaxation parameter $\eps$ to $0$.
The main intuition of the proof  is that 
 the region $\cO_{{\textnormal{st}}}$ can be mapped into a finite number of
  convex sets
(i.e., the sets $(U_k)_{k\in \cK}$ in the proof of Proposition \ref{prop:grad_H_conv}).
Hence, if a reward function only modifies  the pointwise maximum function locally near the kinks,
then one can 
employ 
the local compactness and local convexity structure of $\cO_{{\textnormal{st}}}$ and
 the finiteness of the action set $\bA$,
 and deduce 
the local exact regularization property in the region $\cO_{{\textnormal{st}}}$.
 
\color{black}

The exact regularization feature of $\rho_{{\textnormal{zang}}}$ helps avoid the possible numerical instability for solving the relaxed control problem \eqref{eq:value_relax} with an extremely small relaxation parameter. 
In contrast, the feedback relaxed control $\lambda^{u^\eps}$ based on the entropy reward function $\rho_{\textnormal{en}}$ is always in $(0,1)^K$, and the convergence rate to the original control strategy can be  arbitrarily slow. 
\end{Remark}

\section{Conclusions}

To the best of our knowledge, this is the first paper which  constructs Lipschitz stable feedback control strategies for 
general multi-dimensional continuous-time stochastic control problems,
and rigorously analyzes the performance of a pre-computed feedback control 
for a perturbed problem in a continuous setting.
We also perform a novel first-order sensitivity analysis for 
the value function and feedback relaxed control 
with respect to perturbations in the model parameters,
and 
 quantify  the explicit dependence 
of the Lipschitz stability of feedback controls
on the exploration parameter.
These stability results  provide a theoretical
justification for  recent reinforcement learning heuristics 
that  including an exploration reward in the optimization objective
leads to more robust decision making.

A natural next step would be to extend the stability analysis to finite horizon stochastic control problems
and mean-field control problems 
 with continuous action spaces (see e.g.~\cite{gu2019,wang2019}).
The infinite cardinality of action spaces
 implies that  
the corresponding relaxed controls  take values in  an infinite-dimensional space of probability measures, which poses additional challenges for the analysis of the regularized control problems.
 For example, 
 infinite-dimensional convex analysis on 
spaces of measures must be employed to analyze 
the regularity of the modified Hamiltonians 
and the well-posedness of the associated HJB equations. 
Moreover, one must endow the action space of relaxed controls with a suitable metric structure
(such as the Wasserstein metric)
in order to study the spatial regularity and Lipschitz stability of feedback relaxed controls.

\color{black}
Another interesting direction is to design efficient numerical algorithms for solving the regularized control problems in a continuous setting.

\color{black}

\appendix

%
%
%

\section{Proofs of technical results}\l{appendix:lemmas}

\begin{proof}[Proof of Theorem \ref{thm:verification}]

Let 
$\pi=(\Om, \cF, \{\cF_t\}_{t\ge 0}, \bP,W)\in \Pi_{\textnormal{ref}}$, 
$\a\in \cA_\pi$, 
$x\in \ol{\cO}$,
let $X^{\a,x}=(X^{\a,x}_t)_{t\ge 0}$ be the strong solution to \eqref{eq:sde} with control $\a$,
and 
 for all $t\ge 0$, let 
 $Z^{\a,x}_t=\int_0^t c(X^{\a,x}_s,\a_s)\,ds$.
It is shown in  
\cite[Lemma 3.1]{buckdahn2016}  that  
 $\ex[\exp(\mu\tau^{\a,x})]<\infty$ 
 for some  constant  $\mu>0$,
 which implies that 
 $\tau^{\a,x}<\infty$ with probability 1.
Applying It\^{o}'s formula to the function $\phi(y,z)=u(y)\exp(-z)$, $(y,z)\in \R^n\t \R$, gives us that
\begin{align}\l{eq:ito}
\begin{split}
\ex^{\bP}[\phi(X^{\a,x}_{\tau^{\a,x}},Z^{\a,x}_{\tau^{\a,x}})]
&=
\phi(x,0)+\ex^{\bP}
\left[
\int_0^{\tau^{\a,x}}(\cL_{X^{\a,x}} \phi+c_\a \p_z\phi )(X^{\a,x}_t,Z^{\a,x}_t)\,dt
\right]\\
&=
u(x)+\ex^{\bP}
\left[
\int_0^{\tau^{\a,x}}(\cL_{X^{\a,x}} u-c_\a u )(X^{\a,x}_t) \Gamma^{\a,x}_t 
\,dt
\right],
\end{split}
\end{align}
where $\cL_{X^{\a,x}}$ is the generator of the controlled dynamics $X^{\a,x}$,
and   
$\Gamma^{\a,x}_t=\exp\big(-\int_0^t c(X^{\a,x}_s,\a_s)\,ds\big)$ 
for all $t\in [0,\tau^{\a,x}]$.
The fact that  $u$ is a solution to \eqref{eq:hjb} implies that 
for $\bP$-a.s. $\om\in\Om$, and $t\in [0,\tau^{\a,x}(\om)]$, 
\begin{align}\l{eq:max_Hamiltonian}
\begin{split}
&(\cL_{X^{\a,x}} u-c_\a u)(X^{\a,x}_t (\om)) +f(X^{\a,x}_t(\om),\a_t(\om))
\\
&\le \max_{k\in \cK} 
\bigg(a^{ij}(\cdot,\ba_k)\p_{ij}u(\cdot)+b^i(\cdot,\ba_k)\p_iu(\cdot)
-c(\cdot,\ba_k) u(\cdot)+f(\cdot,\ba_k)\bigg)(X^{\a,x}_t(\om))
=0,
\end{split}
\end{align}
Then, by rearranging the terms,
 using the fact that 
$\phi(X^{\a,x}_{\tau^{\a,x}},Z^{\a,x}_{\tau^{\a,x}})=g(X^{\a,x}_{\tau^{\a,x}})\Gamma^{\a,x}_{\tau^{\a,x}}$
and taking the supremum over 
  all 
$\a\in \cA_\pi$
and $\pi \in \Pi_{\textnormal{ref}}$,  we can deduce that 
$u(x)\ge v(x)$ for all $x\in \ol{\cO}$.

We proceed to show $\a^u$ is a feedback control of \eqref{eq:value} (cf.~Definition \ref{def:feedback}).
Let $\a^u:\ol{\cO}\to \bA$ be a Borel measurable function satisfying \eqref{eq:ctrl},
and $\tilde{\a}^u:\R^n\to \bA$  be an extension of $\a^u$
such that 
 $\tilde{\a}^u=\a^u$ on $\ol{\cO}$
 and $\tilde{\a}^u=\ba_1$ on $\ol{\cO}^c$.
We shall consider the functions 
$b_\a :\R^n\to \R^n$,
$\sigma_\a :\R^n\to \bS^n_0$
such that
$b_\a(x)={b}(x,\tilde{\a}^u(x))$, $\sigma_\a(x)={\sigma}(x,\tilde{\a}^u(x))$
for all $x\in \R^n$.
The measurability of $\a^u$ and the continuity of $b$, $\sigma$ imply that $b_\a$, $\sigma_\a$ and $\tilde{\a}^u$ are Borel measurable.  
Then, for any given $x\in \R^n$, 
by using the boundedness of functions $b_\a,\sigma_\a$, and 
 \cite[Theorem 1]{mishura2016}, we can deduce that there exists 
$\pi^x=(\Om^{x}, \cF^{x}, \{\cF^{x}_t\}_{t\ge 0}, \bP^{x}, W)\in \Pi_{\textnormal{ref}}$,  
and an  $\{\cF^{x}_t\}_{t\ge 0}$-progressively measurable continuous process $(X^{x}_t)_{t\ge 0}$,
 such that
$X^{x}_0=x$,
and
\bb\l{eq:sde_feedback}
dX^{x}_t= b(X^{x}_t,\tilde{\a}^u(X^{x}_t)) \,dt+ \sigma(X^{x}_t,\tilde{\a}^u(X^{x}_t))\,dW_t
\q
\textnormal{for all $t\ge 0$ and $\bP^{x}$-a.s.}
\ee
Thus 
we can obtain from the definition of $\tilde{\a}^u$ that 
$(X^{x}_t)_{t\ge 0}$ satisfies \eqref{eq:X^*} with $h=\a^u$.
Moreover, \cite[Theorem 2.2.4 on p.~54]{krylov1980} implies that 
$\ex^{\bP^{x}}[\int_0^{\tau^{\a^u,x}} \left(| b(X^{x}_s,{\a}^u(X^{x}_s))|+|\sigma(X^{x}_s,{\a}^u(X^{x}_s))|^2\right)\,ds]<\infty$,
which 
shows that 
  $\a^u$ is a feedback control of \eqref{eq:value}.

It remains to show $\a^u$ is an optimal feedback control.
If $x\in\p\cO$, we can deduce from the definition that  $\tau^{\tilde{\a}^u,x}=0$,
which shows that $g(x)=g(X^x_{\tau^{\tilde{\a}^u,x}})=J(x,{\a}^u)$,
where $J(x,{\a}^u)$ is defined as in \eqref{eq:J_h}.
Similarly, we have
for all 
$\pi\in \Pi_{\textnormal{ref}}$, 
$\a\in \cA_\pi$, 
$x\in \p{\cO}$ that
 the first exit time of $X^{\a,x}$ from $\cO$ is 0,
i.e., $\tau^{\a,x}=0$, which implies that $v(x)=g(x)$. 
Hence, we can deduce from the fact that $u$ satisfies the boundary condition of \eqref{eq:hjb} that
$u(x)=g(x)=v(x)= J(x,{\a}^u)$ for all $x\in \p\cO$.

For each $x\in \cO$, let  $X^{x}$ be a progressively measurable continuous process  satisfying the SDE \eqref{eq:sde_feedback}, 
 defined on the reference probability system $\pi^x\in \Pi_{\textnormal{ref}}$.
The assumption that $\a^u$ satisfies \eqref{eq:ctrl}
ensures that $\tilde{\a}^u(X^{x})$ and $X^{x}$ obtain the equality in \eqref{eq:max_Hamiltonian} for  
$\bP$-a.s. $\om\in\Om$, and $t\in [0,\tau^{\tilde{\a}^u,x}(\om)]$, 
from which, 
by using similar arguments as  \eqref{eq:ito},
we can obtain that
$u(x)=J(x,{\a}^u)$ (c.f.~\eqref{eq:J_h}).
On the other hand, owing to the fact that  $\tilde{\a}^u(X^x)\in \cA_{\pi^x}$, we have 
by the definition of $v$ that
$u(x) \le v(x)$ for all $x\in \cO$.
Combining this with  the fact that 
$u(x)\ge v(x)$ for all $x\in \cO$,
we can conclude that 
$u(x)=v(x)=J(x,\a^u)$ in $\cO$,
which shows that  $\a^u$ is an optimal feedback control
and $u\equiv v$ on $\ol{\cO}$.
\end{proof}

\begin{proof}[Proof of Lemma \ref{lemma:sqrt_A}]
 
The definition of $\Delta_K$ 
and $(H.\ref{assum:D})$
clearly imply 
 that the function $\tilde{b}$ is well-defined and enjoys the desired estimates.
Hence we shall focus on  establishing the properties of the function $\tilde{\sigma}$.

It has been shown in \cite[Theorem 7.14-3]{ciarlet2013} that for any given $A\in \bS^n_{>}$, there exists a unique matrix $A^{1/2}\in \bS^n_{>}$ such that $A^{1/2}(A^{1/2})^T=A$, 
$A^{1/2}\ge \sqrt{\mu} I_n$ if $A\ge \mu I_n$, 
and the mapping $\Phi:A\in \bS^n_{>}\mapsto \Phi(A)=A^{1/2}\in \bS^n_{>}$ is infinitely differentiable.
Note that  \eqref{eq:elliptic} and \eqref{eq:holder} in (H.\ref{assum:D}) ensure that
there exists a constant $C\in (0,\infty)$, such that it holds 
 for all $x\in \R^n$, $\lambda\in \Delta_K$ that
 \bb\l{eq:sum_sigma_sqrt}
  \sum_{k=1}^K {\sigma}(x,\ba_k){\sigma}(x,\ba_k)^T \lambda_k\in   
 G\coloneqq
 \bigg\{A=[a_{ij}]\in \bS^n_>\biggm\vert  A\ge \nu I_n, \sum_{i,j=1}^n|a_{ij}|\le C\bigg\}\subset \bS^n_>.
 \ee
We now define the function $\tilde{\sigma}:\R^n\t \Delta_K\to \bS^{n}_{>}$ by 
 $
 \tilde{\sigma}(x,\lambda)=\Phi\left(\sum_{k=1}^K {\sigma}(x,\ba_k){\sigma}(x,\ba_k)^T \lambda_k\right)
 $
 for all 
$x\in \R^n, \lambda\in \Delta_K$.
 The facts that $\Phi$ is a smooth function and  $G$ is  a compact subset of $\bS^n_>$
 imply that $\Phi$ is bounded and Lipschitz continuous on $G$.
Therefore, we can conclude from \eqref{eq:elliptic}, \eqref{eq:holder}, \eqref{eq:sum_sigma_sqrt} 
and the definition of $\tilde{\sigma}$ that
it holds for all  $x\in \R^n$, $\lambda\in \Delta_K$ that 
$ \tilde{\sigma}(x,\lambda)\ge \sqrt{\nu} I_n$
and 
$\sum_{i,j} |\tilde{\sigma}^{ij}(\cdot,\lambda)|_{0,1}<\infty$.
\end{proof}

\begin{proof}[Proof of Lemma \ref{lemma:rho_convex}]
We start by establishing Property \eqref{item:rho}.
Since $H:\R^K\to \R$ is a continuous convex  function, the representation of $\rho$ in (H.\ref{assum:rho}) and 
\cite[Theorem 12.2]{rockafellar1970} ensure that $\rho$ is a closed convex proper function satisfying 
\bb\l{eq:rho_*}
H(x)=\sup_{y\in \R^K} \big( y^Tx-\rho(y)\big) \q \fa x\in \R^K.
\ee
The assumption that 
$H(x)-c_0\le \max_{k\in \cK}x_k\le H(x)$ for all $x\in \R^K$ implies that 
for all $y\in \R^K$, 
\begin{align*}
\sup_{x\in \R^K}\bigg(x^Ty-
\left[\max_{k\in\cK}{x_k}+c_0\right]\bigg)\le 
\sup_{x\in \R^K}\bigg(x^Ty-H(x)\bigg)
=\rho(y)
\le 
\sup_{x\in \R^K}\bigg(x^Ty-\max_{k\in\cK}{x_k}\bigg),
\end{align*}
which together with the fact that 
$$
\sup_{x\in \R^K}\bigg(x^Ty-\max_{k\in\cK}{x_k}\bigg)
=
\begin{cases}
0, & y\in \Delta_K,\\
\infty, & y\not\in \Delta_K,
\end{cases}
$$
shows that $\rho(y)\in [-c_0,0]$ for all $y\in \Delta_K$ and $\rho(y)=\infty$ for all $y\in  (\Delta_K)^c$.
Finally, since $\rho$ is a closed convex function satisfying  $\{y\in \R^K\mid \rho(y)<\infty\}=\Delta_K$, 
we can deduce from 
\cite[Theorem 10.2]{rockafellar1970} ($\Delta_K$ is the standard simplex and hence locally simplicial)  that the restriction of $\rho$ to $\Delta_K$  is a continuous function.

We now show Property \eqref{item:H_eps}.
It is clear from (H.\ref{assum:rho}) and \eqref{eq:H_eps} that 
$H_\eps(x)- c_0\eps\le H_0(x)\le H_\eps (x)$ for all $x\in \R^K$.
Note that \eqref{eq:rho_*} and the fact that 
$\rho=\infty$ on $\Delta_K$
imply that for all $\eps>0$ we have
$$
\eps H(\eps^{-1}x)=\eps\max_{y\in \R^K}\big(y^T\eps^{-1}x-\rho(y)\big)=\max_{y\in \Delta_K}\big(y^Tx-\eps\rho(y)\big)
\q \fa x\in \R^K, 
$$
which shows 
the function  $\eps\rho$ is the convex conjugate of $H_\eps$, i.e., $(H_\eps)^*=\eps \rho$.
Hence, we can further deduce from 
\cite[Theorem 23.5]{rockafellar1970}, 
the differentiability and convexity of  $H_\eps$ that 
$$
(\nabla H_\eps)(x)=\argmax_{y\in \R^K}\big(y^Tx-( H_\eps)^*(y)\big)=\argmax_{y\in \Delta_K}\big(y^Tx-\eps\rho(y)\big)
\in \Delta_K
\q \fa x\in \R^K. 
$$
Consequently, 
we can obtain from 
the fundamental theorem of calculus and the Cauchy-Schwarz inequality that
$H_\eps$ is Lipschitz continuous with constant $L_{H_\eps}=\sup_{x\in \R^K}|(\nabla H_\eps)(x)|\le \max_{y\in \Delta_K}|y|$.
Note that $\Delta_K$ is the convex hull of $\{e_1,\ldots, e_K\}$, where $e_k$ is   the unit vector  from the
$k$-th column of the identify matrix $I_K$.
Hence
\cite[Theorem 32.2]{rockafellar1970}
  ensures that $\max_{y\in \Delta_K}|y|$ is attained at $\{e_1,\ldots, e_K\}$,
  which implies that $L_{H_\eps}\le 1$, 
and finishes the proof of Lemma \ref{lemma:rho_convex}.
\end{proof}

Before establishing Proposition \ref{prop:Holder_bdd}, we first present an \textit{a priori} estimate for solutions of fully nonlinear equations involving only the second order term.
\begin{Lemma}\l{lemma:a_priori}{\cite[Theorem 7.2 on p.~125]{chen1998}}
Let $\cO$ be a bounded connected open  subset of $\R^n$,
and
$F:\cO\t \bS^n\to \R$ be a given function.
Suppose  
the function $F$ is  differentiable and convex in its second component,
and 
there exist constants 
 $\lambda,\Lambda>0$ such that 
$\lambda I_n\le \big[\f{\p F}{\p r_{ij}}(x,r)\big]\le \Lambda I_n$ for all
$(x,r)\in \cO\t \bS^n$.
Then there exists a constant $\a=\a(n,\Lambda/\lambda)\in(0,1)$ such that 
for any $\b\in (0,\a)$,
if  we have in addition that
$\p\cO\in C^{2,\b}$, $g\in C^{2,\b}(\ol{\cO})$,
and 
there exist constants $\gamma, {\mu}>0$ such that 
it holds for all $x,y\in \cO$, $r\in \bS^n$ that 
$|F(x,r)-F(y,r)|\le {\gamma}({\mu}+|r|)|x-y|^{\b}$,
then 
the  Dirichlet problem 
$$
F(x,D^2u)=0
\q\textnormal{in $\cO$}, 
\qq
u=g \q \textnormal{on $\p\cO$,}
$$
 admits a unique solution $u\in C^{2,\b}(\ol{\cO})$ satisfying the  estimate
$[u]_{2,\b}\le C\big[|u|_0+|g|_{2,\b}+\mu\big]$,
where the constant $C$ depends only on $n$, $\Lambda/\lambda$, $\gamma$, $(\a-\b)^{-1}$ and the $C^{2,\b}$-norm of $\p\cO$.
\end{Lemma}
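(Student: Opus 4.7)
The plan is to combine Evans--Krylov interior $C^{2,\alpha}$ theory for convex uniformly elliptic operators with a boundary $C^{2,\alpha}$ estimate, and to establish existence via the method of continuity.

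First I would dispose of uniqueness. If $u_1,u_2\in C^{2,\beta}(\ol{\cO})$ are two solutions of \eqref{eq:hjb_principal}, then by the fundamental theorem of calculus applied to $F$ along the segment between $D^2u_1$ and $D^2u_2$, the difference $w=u_1-u_2$ satisfies a linear uniformly elliptic equation $a^{ij}(x)\p_{ij}w=0$ in $\cO$ with $w=0$ on $\p\cO$, where $a^{ij}(x)=\int_0^1\tfrac{\p F}{\p r_{ij}}(x,sD^2u_1+(1-s)D^2u_2)\,ds$ inherits the uniform ellipticity bounds. The classical maximum principle then forces $w\equiv 0$.

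For the a priori estimate I would proceed in three steps. \textbf{Step 1 (interior $C^{2,\alpha}$).} The Evans--Krylov theorem furnishes an $\alpha=\alpha(n,\Lambda/\lambda)\in (0,1)$ and an interior estimate
\begin{equation*}
[u]_{2,\alpha;\cO'}\le C(|u|_0+\mu),\qquad \cO'\Subset \cO,
\end{equation*}
using only uniform ellipticity, convexity of $F$ in $r$, and the H\"{o}lder hypothesis $|F(x,r)-F(y,r)|\le \gamma(\mu+|r|)|x-y|^\beta$ (the $\mu+|r|$ scaling is absorbed via the standard rescaling argument). \textbf{Step 2 (boundary $C^{2,\beta}$).} Since $\p\cO\in C^{2,\beta}$, I would flatten the boundary locally by a $C^{2,\beta}$-diffeomorphism, so that the transformed equation remains uniformly elliptic and convex with parameters depending only on $(\lambda,\Lambda)$ and the boundary chart; after subtracting a $C^{2,\beta}$-extension of $g$ one may assume zero boundary data on a flat piece. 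Krylov's boundary regularity theorem for convex, uniformly elliptic, fully nonlinear equations then yields a boundary $C^{2,\alpha}$ estimate, with $|g|_{2,\beta}$ entering through the extension and $\mu$ through the H\"{o}lder bound on $F$. \textbf{Step 3 (global estimate).} A finite covering of $\ol{\cO}$ by interior balls and boundary patches combines Steps~1 and~2 into $[u]_{2,\beta}\le C(|u|_0+|g|_{2,\beta}+\mu)$, provided $\beta\in(0,\alpha)$ so that the two H\"{o}lder scales are compatible.

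For existence I would apply the method of continuity along the family $F_t(x,r)=tF(x,r)+(1-t)\tr(r)$, $t\in[0,1]$. Each $F_t$ is convex, uniformly elliptic with parameters depending only on $(\lambda,\Lambda)$, and satisfies the same H\"{o}lder bound in $x$ with the same $\gamma,\mu$; hence the a priori estimate of Steps~1--3 applies uniformly in $t$. Classical Schauder theory handles $t=0$, openness of the solvability set follows from the implicit function theorem in $C^{2,\beta}(\ol{\cO})$, and closedness follows from the uniform estimate together with Arzel\`{a}--Ascoli.

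The main obstacle is Step~2: the boundary $C^{2,\alpha}$ estimate for convex fully nonlinear equations is substantially more delicate than its interior counterpart, requiring barrier constructions and an iterative improvement scheme at the boundary that uses both the convexity of $F$ and the regularity of $\p\cO$. Convexity is essential even for Step~1; without it, $C^{2,\alpha}$ regularity is known to fail in general. Once Step~2 is in hand, the rest of the argument---combining interior and boundary estimates and closing the continuity method---is comparatively routine.
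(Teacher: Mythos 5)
The paper does not prove this lemma; it is quoted directly from Chen and Wu \cite[Theorem~7.2, p.~125]{chen1998}, and your outline (Evans--Krylov interior estimate, Krylov boundary $C^{2,\alpha}$ estimate after flattening the boundary, a covering argument to globalize, and the method of continuity for existence with uniqueness via linearization and the maximum principle) is precisely the standard route taken in that reference. No gap beyond the acknowledged difficulty of the boundary estimate, which is indeed where the real work lies.
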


Now we proceed to prove the \textit{a priori} estimate for solutions to \eqref{eq:hjb_relax}.
\begin{proof}[Proof of Proposition \ref{prop:Holder_bdd}]
Throughout this proof, 
we shall denote by $C$ a generic constant, 
which   may take a different value at each occurrence.

Let $\phi\in C(\ol{\cO}) \cap C^2(\cO)$ be a given function, 
we consider  the Dirichlet problem
\bb\l{eq:F_phi}
{F}_\phi(x,D^2u(x))=0
\q\textnormal{in $\cO$}, 
\qq
u=g \q \textnormal{on $\p\cO$,}
\ee
where we define 
$D^2u(x)=[\p_{ij}u(x)]\in \bS^n$, and
the function ${F}_\phi:\cO\t \bS^n\to \R$ such that for all $x\in\cO$ and $r=[r_{ij}]\in \bS^n$,
$$
{F}_\phi(x,r)\coloneqq
H_\eps
\left(
\big(
a^{ij}_k(x)r_{ij}+b^i_k(x)\p_i\phi(x)-c_k(x) \phi(x)+f_k(x)
\big)_{k\in \cK}
\right).
$$
It follows from (H.\ref{assum:rho}) that ${F}_\phi$ is differentiable and convex in $r$.  Moreover, a straightforward computation shows 
for all $(x,r)\in \cO\t \bS^n$ that 
$\big[\tfrac{\p {F}_\phi}{\p r_{ij}}(x,r)\big]
=\sum_{k=1}^K
\eta_k(x,r)
a_k(x)$,
where we have 
$$ 
\eta_l(x,r)\coloneqq 
\p_l H_\eps
\left(
\big(
a^{ij}_k(x)r_{ij}+b^i_k(x)\p_i\phi (x)-c_k(x) \phi(x)+f_k(x)
\big)_{k\in \cK}
\right), \q l=1,\ldots, K.
$$
Note that for each $k\in \cK$, the fact that 
$a_k=\sigma\sigma^T/2$ and 
 (H.\ref{assum:D}) (see \eqref{eq:elliptic}-\eqref{eq:holder})
imply that 
there exists a constant $C$, depending only on $n$, such that 
for all $x\in \cO$, 
$$
\frac{\nu}{2}I_n\le a_k(x)\le 
\textrm{tr}\big(a_k^T(x)a_k(x)\big)I_n\le C\bigg( \sup_{i,j,k}|\sigma^{ij}_k|_{0;\ol{\cO}}\bigg)^4 I_n,
$$ 
which, along with the fact that 
$(\eta_1(x,r),\ldots, \eta_K(x,r))^T\in \Delta_K$
for all $(x,r)\in \cO\t \bS^n$ (see Lemma \ref{lemma:rho_convex}\eqref{item:H_eps}),
shows  that 
$\tfrac{\nu}{2}I_n\le \big[\tfrac{\p {F}_\phi}{\p r_{ij}}(x,r)\big]\le C I_n$,
for some constant $C$ depending only on $n$ and the constant $M$ 
defined in the statement of Proposition \ref{prop:Holder_bdd}.

The regularity of the coefficients in (H.\ref{assum:D}) 
and the Lipschitz continuity of $H_\eps$ (see Lemma \ref{lemma:rho_convex}\eqref{item:H_eps}) 
imply that,
 if the function $\phi\in C^{2,\eta}(\ol{\cO})$, $0<\eta\le \theta$, then the function $F_\phi$ satisfies for all $x,y\in \cO$, $r\in \bS^n$ that 
$$
|{F}_\phi(x,r)-{F}_\phi(y,r)|
\le C\Lambda (|r|+|\phi|_{1,\eta}+1)|x-y|^\eta,
$$
for some constant $C$ depending only on $n$.
Consequently, we can deduce from Lemma \ref{lemma:a_priori} that,
there exists a constant $\b_0=\b_0(n,\nu,M)\in (0,1)$, 
such that for all $\b\in (0,\min(\b_0,\theta)]$ and $\phi\in C^{2,\b}(\ol{\cO})$,  the Dirichlet problem \eqref{eq:F_phi} admits a unique solution  $u^\phi\in C^{2,\b}(\ol{\cO})$, and satisfies
$[u^\phi]_{2,\b}\le C\big[|u^\phi|_0+|g|_{2,\b}+|\phi|_{1,\b}+1\big]$,
where the constant $C$ depends only on $n$, $\nu$, $\Lambda$,  $\b$, and $\cO$.

Now let $u^\eps\in C^{2,\b}(\ol{\cO})$, $\b\in (0,\min(\b_0,\theta)]$ be a solution to \eqref{eq:hjb_relax}.  Then it is clear that $u^\eps$ is a solution to the Dirichlet problem:
${F}_{u^\eps}(x,D^2u(x))=0$ {in $\cO$} and  $u=g$ {on $\p\cO$}.
We can then deduce from the above arguments that,
there exists a constant  $C$, depending only on $n$, $\nu$, $\Lambda$, $\b$ and  $\cO$,
such that 
$[u^\eps]_{2,\b}\le C\big[|g|_{2,\b}+|u^\eps|_{1,\b}+1\big]$.
Hence by using the interpolation inequality (see \cite[Theorem 1.2 on p.~18]{chen1998}),
we have 
$|u^\eps|_{2,\b}\le C\big[|g|_{2,\b}+|u^\eps|_{0}+1\big]$.

It remains to estimate $|u^\eps|_{0}$. 
By using the fundamental theorem of calculus, we have
for all $x\in \cO$ that 
\begin{align*}
-H_\eps(\bf(x))&=H_\eps(\bL u^\eps(x)+\bf(x))-H_\eps(\bf(x))=
\int_0^1 (\nabla H_\eps)^T(s\bL u^\eps(x)+\bf(x))\bL u^\eps(x)\,ds,
\end{align*}
from which, by using the  classical maximum principle (see e.g.~\cite[Theorem 3.7]{gilbarg1985}) and the fact that $\nabla H_\eps \in \Delta_K$ (see Lemma \ref{lemma:rho_convex}\eqref{item:H_eps}),
we can deduce that, there exists a constant $C=C(n,\Lambda,\cO)>0$ that 
$$
|u^\eps|_0\le C\bigg(\sup_{x\in \p\cO}|g(x)|+|H_\eps(\bf)|_0\bigg)
\le C\big(|g|_{0;\ol{\cO}}+|H_0(\bf)|_0+\eps c_0\big)
\le C\big(|g|_{0;\ol{\cO}}+1+\eps c_0\big),
$$
which together with the fact that 
$|u^\eps|_{2,\b}\le C\big[|g|_{2,\b}+|u^\eps|_{0}+1\big]$ leads to the desired estimate.
\end{proof}

\begin{proof}[Proof of Proposition \ref{prop:schauder_eps}]
The well-posedness of the classical solution $w^\eps$ follows  from  
 the standard elliptic regularity theory (see \cite[Theorem 6.14]{gilbarg1985}),
 hence it suffices to  prove the \textit{a priori} estimate for a fixed $\eps>0$.

Let $\rho>0$ be a constant whose value will be specified later,
and 
$(\xi_m)_{m=1}^M$ be   a partition of unity   in a domain containing $\ol{\cO}$
such that the following properties hold: 
 (1) the support of each function $\xi_m$ is contained in a ball $B_\rho(x_m)$ for some $x_m\in \R^n$;
 (2) $\xi_m\in C^\infty(\R^n)$ satisfies 
 for all $\gamma\ge 0$ that $|\xi_m|_{\lfloor \ga \rfloor, \ga-\lfloor \ga \rfloor}\le C_\gamma \rho^{-\gamma}$,
 where $\lfloor \ga \rfloor$ is the integer part of $\ga$
 and $C_\ga$ is a constant independent of $m$ and $\ga$;
 (3) for each $x\in \ol{\cO}$, 
  $\sum_{m=1}^M \xi_m(x)= 1$ 
  and the number of intersected supports of $(\xi_m)_{m=1}^M$ at $x$
  is bounded by a constant $M_n$ depending only on the dimension $n$.
In the following, we shall denote by $w$ the  solution  $w^\eps$, 
and  by $C$ a generic constant 
 independent of $\a$, $m$ and $\eps$.

For each $m=1,\ldots, M$,
 we define the function $w_m=w\xi_m$,
 which satisfies $w_m=g\xi_m$ on $\p\cO$ and 
\begin{align*}
a^{ij}_\eps(x_m)\p_{ij}w_m&=(a^{ij}_\eps(x_m)-a^{ij}_\eps)\p_{ij}w_m
+ a^{ij}_\eps(\p_j w \p_i\xi_m+\p_i w \p_j\xi_m+ w \p_{ij}\xi_m)+\tilde{f}
,\q \textnormal{in $\cO$},
\end{align*}
where 
$\tilde{f}\coloneqq (-b^i_\eps\p_iw + c_\eps w- f)\xi_m$.
Hence applying the classical Schauder estimate 
yields that 
 \begin{align}\l{eq:w_m}
|w_m|_{2,\b}&\le C\big(
|(a^{ij}_\eps(x_m)-a^{ij}_\eps)\p_{ij}w_m|_\b
+ |a^{ij}_\eps(\p_j w \p_i\xi_m+\p_i w \p_j\xi_m+ w \p_{ij}\xi_m)|_\b+|\tilde{f}|_\b+|g\xi_m|_{2,\b}
\big)
\end{align}
for  some constant $C=C(n,\b,\nu,\Lambda,\cO)$. 

Note that by choosing $\rho=(2C\Lambda \eps^{-\a})^{-1/\b}$,
we have for all $x\in B_\rho(x_m)$ and $i,j=1,\ldots,n$ that 
$$
|(a^{ij}_\eps(x_m)-a^{ij}_\eps(x)|\le [\a^{ij}_\eps]_\b |x-x_m|^\b\le [\a^{ij}_\eps]_\b \rho^\b\le 1/(2C),
$$
which together with the fact that $\p_{ij}w_m=0$ on $\ol{\cO}\setminus B_\rho(x_m)$ implies that 
 \begin{align*}
&|(a^{ij}_\eps(x_m)-a^{ij}_\eps)\p_{ij}w_m|_{\b;\ol{\cO}}
= |(a^{ij}_\eps(x_m)-a^{ij}_\eps)\p_{ij}w_m|_{\b;\ol{\cO}\cap B_\rho(x_m)} \\
&\le  |a^{ij}_\eps(x_m)-a^{ij}_\eps|_{0;\ol{\cO}\cap B_\rho(x_m)} |\p_{ij}w_m|_{\b;\ol{\cO}\cap B_\rho(x_m)}
+|a^{ij}_\eps(x_m)-a^{ij}_\eps|_{\b;\ol{\cO}\cap B_\rho(x_m)} |\p_{ij}w_m|_{0;\ol{\cO}\cap B_\rho(x_m)}\\
&\le  |w_m|_{2,\b;\ol{\cO}}/(2C)+[a^{ij}]_\b |w_m|_{2;\ol{\cO}}
\le  |w_m|_{2,\b;\ol{\cO}}/(2C)+\Lambda\eps^{-\a} |w_m|_{2;\ol{\cO}}.
\end{align*}
Then we can deduce from  the interpolation inequality (see \cite[Theorem 1.3 on p.~19]{chen1998}) 
and \eqref{eq:w_m} that 
\begin{align}\l{eq:w_m2}
|w_m|_{2,\b}&\le C\big(
\eps^{-\a (\b+2)/\b}|w_m|_{0}
+ |a^{ij}_\eps(\p_j w \p_i\xi_m+\p_i w \p_j\xi_m+ w \p_{ij}\xi_m)|_\b+|\tilde{f}|_\b+|g\xi_m|_{2,\b}
\big).
\end{align}
Note that   for all  $\gamma\ge 0$, we can obtain from  property (2) of $(\xi_m)_{m=1}^M$ that 
 $|\xi_m|_{\lfloor \ga \rfloor, \ga-\lfloor \ga \rfloor}\le C_\gamma (2C\Lambda \eps^{-\a})^{\gamma/\b}$.
Hence by  repeatedly applying interpolation inequalities, we can 
 simplify \eqref{eq:w_m2} into
\begin{align*}
|w_m|_{2,\b}&\le C\big(
\eps^{-\a (\b+2)/\b}|w_m|_{0}
+ \eps^{-\a }|f|_{0}+ [f]_{\b}+\eps^{-\a (\b+2)/\b}|g|_{2,\b}
\big),
\end{align*}
which along with properties (2) and (3) of $(\xi_m)_{m=1}^M$ leads  to the estimate that
\begin{align*}
|w|_{2,\b}\le 2M_n\max_m |w_m|_{2,\b}
\le C\big(
\eps^{-\a (\b+2)/\b}|w|_{0}
+ \eps^{-\a }|f|_{0}+ [f]_{\b}+\eps^{-\a (\b+2)/\b}|g|_{2,\b}
\big).
\end{align*}
Finally, we can conclude from the 
classical maximum principle (see e.g.~\cite[Theorem 3.7]{gilbarg1985}) that
$|w|_{0}\le C(|f|_{0}+|g|_{0})$, which finishes the proof of 
the desired \textit{a priori} estimate.
\end{proof}

\begin{proof}[Proof of Lemma \ref{lemma:S_loc}]
We first establish Property \eqref{item:composition}. 
For any given  $x=(x_1,\ldots, x_{n_2+n_3})^T\in \R^{n_2+n_3}$, we write  
$x^{(1)}=(x_1,\ldots, x_{n_2}) \in \R^{n_2}$ and $x^{(2)}=(x_{n_2+1},\ldots, x_{n_2+n_3})\in  \R^{n_3}$.

Let $x\in \R^{n_2+n_3}$ satisfy for some $k\in \{1,\ldots, n_2+n_3\}$  that $x_k\ge \max_{j\not =k}x_j+c$ with $c=\max(\vartheta_2,\vartheta_3,c_2+\vartheta_1,c_3+\vartheta_1)$. We assume without loss of generality  that $k\le n_2$. 
Then since $\phi_2$ satisfies 
($S_{\textnormal{loc}}$) with $\vartheta_2$ and $c\ge \vartheta_2$,
we have  that  $\phi_2(x^{(1)})=x_k$ and $\phi_3(x^{(2)})\le H^{(n_3)}_0(x^{(2)})+c_3$. Moreover, since $x_k\ge H^{(n_3)}_0(x^{(2)})+c$ and $c\ge c_3+\vartheta_1$, we see $\phi_2(x^{(1)})\ge \phi_3(x^{(2)})+\vartheta_1$, which, along with the assumption that  
$\phi_1$ satisfies ($S_{\textnormal{loc}}$) with $\vartheta_1$, implies $\phi(x)=\phi_2(x^{(1)})=x_k$. 
Similar arguments show that the same conclusion holds if $k\ge n_2+1$, which enables us to conclude that 
$\phi$ satisfies ($S_{\textnormal{loc}}$) with $c$.

Now let $x\in \R^{n_2+n_3}$ be an arbitrary given point. 
We have by assumptions that $\phi_2(x^{(1)})\le H^{(n_2)}_0(x^{(1)})+c_2$ and $\phi_3(x^{(2)})\le H^{(n_3)}_0(x^{(2)})+c_3$. Hence, 
by using the fact that $H_0^{(2)}$ is  component-wise increasing and subadditive on $\R^2$,
we have 
\begin{align*}
\phi(x)&= \phi_1(\phi_2(x^{(1)}),\phi_3(x^{(2)}))\le H^{(n_1)}_0(\phi_2(x^{(1)}),\phi_3(x^{(2)}))+c_1\\
&\le H^{(n_1)}_0(H^{(n_2)}_0(x^{(1)}),H^{(n_3)}_0(x^{(2)}))+c_1+\max(c_2,c_3)=H^{(n_2+n_3)}_0(x)+c_1+\max(c_2,c_3),
\end{align*}
which finishes the proof of Property \eqref{item:composition}. Property  \eqref{item:scaling} follows directly from the definition of $\phi_\eps$.
\end{proof}


\end{document}